\newcommand{\PreserveBackslash}[1]{\let\temp=\\#1\let\\=\temp}
\newcolumntype{C}[1]{>{\PreserveBackslash\centering}p{#1}}
\newcolumntype{R}[1]{>{\PreserveBackslash\raggedleft}p{#1}}
\newcolumntype{L}[1]{>{\PreserveBackslash\raggedright}p{#1}}
\renewcommand{\setminus}{{\smallsetminus}}
\newcommand{\cp}[1]{\vcenter{\hbox{#1}}}
\newtheorem{theorem}{Theorem}[section]
\newtheorem{lemma}[theorem]{Lemma}
\newtheorem{proposition}[theorem]{Proposition}
\newtheorem{definition}[theorem]{Definition}
\newtheorem{corollary}[theorem]{Corollary}
\newtheorem{conjecture}[theorem]{Conjecture}
\theoremstyle{remark}
\newtheorem{remark}[theorem]{Remark}
\theoremstyle{remark}
\numberwithin{equation}{section}
\begin{document}
\title{\bf A relative version of the Turaev-Viro invariants and the volume of hyperbolic polyhedral $3$-manifolds}

\author{Tian Yang}

\date{}
\maketitle

\begin{abstract}  We define a relative version of the Turaev-Viro invariants for an ideally triangulated compact $3$-manifold with non-empty boundary and a coloring on the edges, generalizing the Turaev-Viro invariants\,\cite{TV} of the manifold.  We also propose the Volume Conjecture for these invariants  whose asymptotic behavior is related to the volume of the manifold in the hyperbolic polyhedral metric\,\cite{Luo, LY} with singular locus the edges and cone angles determined by the coloring, and prove the conjecture in the case that the cone angles are sufficiently small. This suggests an approach of solving the Volume Conjecture for the Turaev-Viro invariants proposed by Chen-Yang\,\cite{CY} for hyperbolic $3$-manifolds with totally geodesic boundary.
\end{abstract}


\section{Introduction}


Let $M$ be a compact $3$-manifold with non-empty boundary, and let $\mathcal T$ be an ideal triangulation of $M,$ that is, a finite collection  $T=\{\Delta_1,\dots,\Delta_{|T|}\}$ of truncated Euclidean tetrahedra with faces identified in pairs by affine homeomorphisms. We also let $E=\{e_1,\dots,e_{|E|}\}$ be the set of edges of $\mathcal T.$ For a positive integer $r\geqslant 3,$ a \emph{coloring} $\mathbf a$ of $(M,\mathcal T)$ assigns an integer $a_i$ in between $0$ and $r-2$ to the edge $e_i,$ and the coloring $\mathbf a$ is \emph{$r$-admissible} if for any $\{i,j,k\}\subset\{1,\dots, |E|\}$ such that $e_i,$ $e_j$ and $e_k$ are the edges of a face of $\mathcal T,$ 
\begin{enumerate}[(1)]
\item $a_i+a_j-a_k\geqslant 0,$
\item $a_i+a_j+a_k\leqslant 2(r-2),$ and
\item $a_i+a_j+a_k$ is even.
\end{enumerate}

\begin{definition}  Let $r\geqslant 3$ be an integer and let $q$ be a $2r$-th root of unity such that $q^2$ is a primitive $r$-th root of unity. Then the $r$-th relative Turaev-Viro invariant of $(M,\mathcal T)$ with the coloring $\mathbf b=(b_1,\dots,b_{|E|})$ on the edges is defined by
$$\mathrm{TV}_r(M, E, \mathbf b)=\sum_{\mathbf a}\prod_{i=1}^{|E|} \mathrm H(a_i, b_i)\prod_{s=1}^{|T|}\bigg|\begin{matrix} a_{s_1} & a_{s_2}  & a_{s_3} \\  a_{s_4}  &  a_{s_5}  &  a_{s_6}  \end{matrix} \bigg|,$$
where the sum is over all the $r$-admissible colorings  $\mathbf a=(a_1,\dots,a_{|E|})$ of $(M,\mathcal T),$ 
$$\mathrm H(a_i, b_i)=(-1)^{a_i+ b_i}\frac{q^{(a_i+1)(b_i+1)}-q^{-(a_i+1)(b_i+1)}}{q-q^{-1}},$$
$\{a_{s_1},\dots,{a_{s_6}}\}$ are the colors of the edges of the tetrahedron $\Delta_s$ assigned by $\mathbf a$
and $\bigg|\begin{matrix} a_{s_1} & a_{s_2}  & a_{s_3} \\  a_{s_4}  &  a_{s_5}  &  a_{s_6}  \end{matrix} \bigg|$
is the quantum $6j$-symbol of the $6$-tuple $(a_{s_1},\dots,{a_{s_6}}).$ (See Section \ref{6jsymbol}.)

 \end{definition}
 
We note that if $\mathbf b=(0,\dots,0),$ then $\mathrm{TV}_r(M, E, \mathbf b)$ coincides with the Turaev-Viro invariant of $M$\,\cite{TV}. The definition is inspired by the discrete Fourier transforms of the Yokota invariants\,\cite{BAR} and their relationship with the hyperbolic volume of deeply truncated polyhedra\,\cite{BY}. There is a possibility that this definition can be generalized to the setting of modular tensor categories (see \cite[Section 1]{Liu} and references therein). 

Similar to the relationship between the Turaev-Viro invariants of $M$ and the Reshetikhin-Turaev invariants of its double\,\cite{T, R, BP}, the relative Turaev-Viro invariants of $(M,\mathcal T)$ and the relative Reshetikhin-Turaev invariants\,\cite{BHMV, Li} of the double of $M$ is related as follows.
 
 \begin{theorem}\label{=} At $q=e^{\frac{\pi i}{r}},$
 $$\mathrm{TV}_r(M,E, \mathbf b)=\bigg(\frac{2\sin\frac{\pi}{r}}{\sqrt {2r}}\bigg)^{-\chi(M)} \mathrm{RT}_r(D(M), D(E), \mathbf b);$$
 and at $q=e^{\frac{2\pi i}{r}},$
  $$\mathrm{TV}_r(M,E, \mathbf b)=2^{\mathrm{rank}\mathrm H_2(M;\mathbb Z_2)}\bigg(\frac{2\sin\frac{2\pi}{r}}{\sqrt r}\bigg)^{-\chi(M)} \mathrm{RT}_r(D(M), D(E), \mathbf b),$$
 where $\chi(M)$ is the Euler characteristic of $M,$  $D(M)$ is the double of $M$ and $D(E)\subset D(M)$ is the link  consisting of the union of the double of the edges. 
 \end{theorem}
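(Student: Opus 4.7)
The plan is to extend the Roberts--Benedetti--Petronio identification of the Turaev--Viro invariant with the Reshetikhin--Turaev invariant of the double (which handles the case $\mathbf b=\mathbf 0$) by explicitly tracking how the coloring $\mathbf b$ on the link $D(E)$ enters the state sum. The starting point is a surgery presentation of $D(M)$ built from the ideal triangulation $\mathcal T$: one produces a framed link $L_{\mathcal T}\subset S^3$ whose surgery yields $D(M)$, with a distinguished component for each edge $e_i$, and in this presentation the component of $D(E)$ doubling $e_i$ sits as a small parallel meridian of the corresponding component of $L_{\mathcal T}$.

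Using the skein definition of the relative Reshetikhin--Turaev invariant\,\cite{BHMV, Li}, $\mathrm{RT}_r(D(M),D(E),\mathbf b)$ is then a normalized evaluation in $S^3$ of the colored link $L_{\mathcal T}\cup D(E)$, with the components of $L_{\mathcal T}$ labeled by Kirby colors and the components of $D(E)$ labeled by $\mathbf b$. Expanding each Kirby color as a sum over admissible labels $\mathbf a$ and reducing the resulting skein element locally, the integrand decomposes into standard pieces: a quantum $6j$-symbol for each tetrahedron of $\mathcal T$, a fusion delta-function enforcing admissibility at each face, and, at each edge $e_i$, the evaluation of a Hopf link whose two components are colored by $a_i$ and $b_i$. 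A direct skein calculation identifies this Hopf-link value with $\mathrm H(a_i,b_i)$, which replaces the quantum dimension $\mathrm{dim}_q(a_i)$ appearing in the closed (non-relative) case.

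Gathering all local factors reproduces the state sum defining $\mathrm{TV}_r(M,E,\mathbf b)$, up to a single normalization constant depending only on the vertex, edge, face and tetrahedron counts of $\mathcal T$. Using $\chi(M)=|V|-|E|+|F|-|T|$ and the normalization conventions of \cite{BHMV, Li}, this constant becomes $\big(2\sin(\pi/r)/\sqrt{2r}\big)^{-\chi(M)}$ at $q=e^{\pi i/r}$; at $q=e^{2\pi i/r}$ one gets $\big(2\sin(2\pi/r)/\sqrt r\big)^{-\chi(M)}$ together with the extra factor $2^{\mathrm{rank}\mathrm H_2(M;\mathbb Z_2)}$ coming from the standard $SU(2)$-vs-$SO(3)$ correction familiar from the closed case.

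The main obstacle I expect is the local step at each edge: verifying that inserting a $b_i$-colored component of $D(E)$ parallel to a Kirby-colored chain-mail meridian genuinely produces $\mathrm H(a_i,b_i)$ with the correct sign $(-1)^{a_i+b_i}$ and phase, since $D(e_i)$ carries a framing dictated by the doubling construction and any mismatch would introduce spurious twist contributions. Once this local model is pinned down, the remaining bookkeeping---especially the separate normalizations at the two roots of unity and the $2^{\mathrm{rank}\mathrm H_2(M;\mathbb Z_2)}$ factor---follows the closed-case template essentially verbatim.
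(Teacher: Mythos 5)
Your proposal follows essentially the same route as the paper: the Roberts chain-mail construction applied to the handle decomposition dual to $\mathcal T$, with $D(E)$ realized as small meridional loops $\gamma_i$ around the Kirby-colored surgery components coming from the edges, and the $\mathrm H(a_i,b_i)$ factor arising as the Hopf-link evaluation of a $b_i$-colored loop with the $a_i$-summand of the Kirby color. The obstacle you flag (pinning down the local model at each edge, including signs and framing) is precisely what the paper outsources to \cite[Lemma 3.3, Lemma A.4, Theorem 2.9]{DKY}, so your outline is sound. One small caution: you write $\chi(M)=|V|-|E|+|F|-|T|$, but an ideal triangulation has no interior vertices; the normalization the paper uses is $\mu_r^{|T|-|E|+|F|}=\mu_r^{\chi(M)}$, and the correct counting (and the $2^{\mathrm{rank}\,\mathrm H_2(M;\mathbb Z_2)}$ factor from restricting to even colorings) should be taken from the cited results in \cite{DKY} rather than from the closed-manifold Euler formula.
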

 
Theorem \ref{=} can be proved following the same idea of Roberts \cite{R}. See also \cite{BP} for the case of manifolds with non-empty boundary and \cite{DKY} for the case that $q=e^{\frac{2\pi i}{r}}$ for odd $r.$ For the readers convenience, we include a sketch of the proof of Theorem \ref{=} in Section \ref{TVRT}.
 

Various quantum invariants are expected to contain geometric information of various $3$-dimensional objects. See for example \cite{Ka2, MM, CY, BY, WY2}. For the relative Turaev-Viro invariants, the corresponding geometric object is the hyperbolic polyhedral metric. As defined in \cite{Luo, LY}, a \emph{hyperbolic polyhedral metric} on an ideally triangulated $3$-manifold $(M,\mathcal T)$ is obtained by replacing each tetrahedron in $\mathcal T$ by a truncated hyperideal tetrahedron (see Section \ref{CCS}) and replacing the gluing homeomorphisms between pairs of the faces  by isometries. The \emph{cone angle} at an edge is the sum of the dihedral angles of the truncated hyperideal tetrahedra around the edge. If all the cone angles are equal to $2\pi,$ then the hyperbolic polyhedral metric gives a hyperbolic metric on $M$ with totally geodesic boundary.  In \cite[Theorem 1.2 (b)]{LY}, Luo and the author proved that hyperbolic polyhedral metrics on $(M, \mathcal T )$ are rigid in the sense that they are up to isometry determined by their cone angles.

\begin{conjecture}\label{VC} Let $\{\mathbf b^{(r)}\}$ be a sequence of colorings of $(M,\mathcal T).$ For each $i\in\{1,\dots,|E|\},$  let 
$$\theta_i=\Big|2\pi -\lim_{r\to\infty} \frac{4\pi b_i^{(r)}}{r}\Big|$$
and let $\boldsymbol\theta=(\theta_1,\dots,\theta_{|E|}).$
Then as $r$ varies over all odd integers and at $q=e^{\frac{2\pi i}{r}},$ 
$$\lim_{r\to \infty} \frac{2\pi}{r} \log \mathrm{TV}_r(M,E, \mathbf b^{(r)}) =\mathrm {Vol}(M_{E_{\boldsymbol\theta}}),$$
where $M_{E_{\boldsymbol\theta}}$ is $M$ with the hyperbolic polyhedral metric on $(M,\mathcal T)$ with cone angles $\boldsymbol\theta.$
\end{conjecture}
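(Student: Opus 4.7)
The plan is to attack the conjecture directly via a stationary-phase analysis of the state sum defining $\mathrm{TV}_r(M, E, \mathbf b^{(r)})$ at $q=e^{2\pi i/r}$. Each summand is a product of edge weights $\mathrm H(a_i,b_i^{(r)})$ and tetrahedral quantum $6j$-symbols, for which precise asymptotic expansions are available as the colors scale linearly with $r$. I would work directly on the $\mathrm{TV}$ side rather than route through Theorem \ref{=}, since the state-sum form is already well adapted to saddle-point analysis.

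First, I would set $\alpha_i = 2\pi a_i^{(r)}/r$ and record the leading behavior of each ingredient as $r\to\infty$. The factor $\mathrm H(a_i,b_i^{(r)})$ grows at most polynomially and contributes an oscillatory phase whose logarithm, to leading order, is $\tfrac{r}{2\pi}\cdot i\alpha_i(2\pi-\theta_i)/2$ plus lower-order corrections. For each tetrahedral block, I would invoke the Costantino--Murakami-type asymptotic for the quantum $6j$-symbol at $q=e^{2\pi i/r}$: when $(\alpha_{s_1},\ldots,\alpha_{s_6})$ lies in the range of dihedral angles of a truncated hyperideal tetrahedron $\Delta_s(\alpha)$, the symbol behaves like $\exp\!\bigl(\tfrac{r}{2\pi}(\mathrm{Vol}(\Delta_s(\alpha))+i\Lambda_s(\alpha))\bigr)$ up to a polynomial prefactor, with $\Lambda_s$ an explicit real combination of edge lengths and dihedral angles.

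Next, I would approximate the discrete sum over $r$-admissible colorings by an integral over the polytope $P$ of admissible normalized angles $\alpha$, via a Riemann-sum or Poisson-summation argument adapted to the parity and triangle-inequality constraints. The resulting integrand takes the form $e^{\frac{r}{2\pi}\Phi_{\mathbf b}(\alpha)}$ times a polynomially bounded factor, with $\mathrm{Re}\,\Phi_{\mathbf b}(\alpha)=\sum_s\mathrm{Vol}(\Delta_s(\alpha))$ exactly the total polyhedral volume. By Schl\"afli's formula $d\,\mathrm{Vol}=-\tfrac{1}{2}\sum\ell_i\,d\theta_i$, the critical-point equations $\nabla\Phi_{\mathbf b}=0$ reduce to the condition that the dihedral angles of the $\Delta_s(\alpha)$ glue around each edge $e_i$ to the prescribed cone angle $\theta_i$; that is, to the existence of a hyperbolic polyhedral metric on $(M,\mathcal T)$ with cone angles $\theta$. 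By the rigidity result \cite[Theorem 1.2(b)]{LY} of Luo and the author, such a critical point $\alpha^{\ast}$ is unique and $\mathrm{Re}\,\Phi_{\mathbf b}(\alpha^{\ast})=\mathrm{Vol}(M_{E_\theta})$, so steepest descent at $\alpha^{\ast}$ delivers the claimed limit.

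The smallness hypothesis on $\theta$ enters twice: it ensures that $\alpha^{\ast}$ sits in the interior of $P$ and in the regime where each $\Delta_s(\alpha^{\ast})$ is a genuine non-degenerate truncated hyperideal tetrahedron so that the Costantino--Murakami asymptotic applies, and it rules out competing critical points on $\partial P$. I expect the main obstacle to be the rigorous justification of the saddle-point reduction: one must establish uniform exponential bounds on the contributions of colorings far from $\alpha^{\ast}$, in particular those producing Euclidean or spherical tetrahedral blocks where a different asymptotic regime governs, and verify that the oscillatory phases coming from the $\mathrm H$ factors and the imaginary parts $\Lambda_s$ do not create secondary stationary points that could dominate or cancel the desired one.
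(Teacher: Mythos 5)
Your high-level program matches the paper's: compute the state sum directly, scale colors to $\alpha_i=2\pi a_i/r$, convert to an integral via Poisson summation against a bump function, run steepest descent, use the Schl\"afli formula to interpret the critical equation as the cone-angle gluing condition, and invoke rigidity from \cite{LY} for uniqueness. The smallness hypothesis enters exactly as you say. However, there is a genuine gap in the way you locate and evaluate the saddle.

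You set up the analysis so that the stationary point $\alpha^{\ast}$ sits in the interior of the real admissible polytope $P$, with $\mathrm{Re}\,\Phi_{\mathbf b}(\alpha)=\sum_s\mathrm{Vol}(\Delta_s(\alpha))$ and $\mathrm{Re}\,\Phi_{\mathbf b}(\alpha^{\ast})=\mathrm{Vol}(M_{E_\theta})$. This cannot work. On the real slice, the $\mathrm H$-factor contributes only an oscillatory phase, so $\mathrm{Re}\,\Phi_{\mathbf b}$ is exactly the total hyperideal-tetrahedral volume, whose unique critical point on $P$ is $\alpha=(\pi,\dots,\pi)$ (every tetrahedron a regular ideal octahedron), with value $|T|v_8>\mathrm{Vol}(M_{E_\theta})$ whenever $\theta\neq 0$. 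There is no real $\alpha^{\ast}$ at which $\mathrm{Re}\,\Phi_{\mathbf b}$ is stationary with the smaller value $\mathrm{Vol}(M_{E_\theta})$; the value you want is attained only on a level set, not at a real critical point. In fact, for real $\alpha$ the magnitude part $\sum_s\mathrm{Vol}(\Delta_s(\alpha))$ has nonzero gradient whenever the tetrahedra have nonzero dihedral angles, precisely the regime you need, so the full holomorphic phase cannot be stationary at real $\alpha$ with $\theta\neq 0$. The actual critical point (Proposition \ref{crit}) is complex: $\alpha^{\ast}_i=\pi\pm\sqrt{-1}\,l_i$, with $l_i$ the hyperbolic edge length of $e_i$ in $M_{E_\theta}$, and one must explicitly deform the integration cycle into $\mathbb C^{|E|+|T|}$ through this point (the surface $S^{\epsilon}$ of Section \ref{leading}), using the concavity/convexity of $\mathrm{Im}\,\mathcal W^{\epsilon}$ to control the deformed boundary.

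Relatedly, the evaluation of the critical value is not as you describe. At the complex saddle the tetrahedral potential evaluates not to $\mathrm{Vol}(\Delta_s)$ but to the co-volume $\mathrm{Cov}=\mathrm{Vol}+\tfrac{1}{2}\sum\theta_i l_i$ (Theorem \ref{co-vol}), and it is the contribution of the $\mathrm H$-factors, $-\sqrt{-1}\,l_i\theta_i$ per edge, that exactly cancels the $\tfrac{1}{2}\sum\theta_i l_i$ overcount when the cone-angle condition holds, leaving $2\sqrt{-1}\,\mathrm{Vol}(M_{E_\theta})$. Your outline drops this cancellation because it never leaves the real slice where Costantino's real-angle asymptotics apply. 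A secondary omission: the paper's phase function carries one auxiliary integration variable $\xi_s$ per tetrahedron (from the inner sum in each quantum $6j$-symbol), so the analysis is over $\mathbb R^{|E|+|T|}$, not just over the space of colors, though this is a technical point you could absorb into a preliminary stationary-phase in $\xi$.
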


We note that if $\mathbf b=(0,\dots,0),$ then Conjecture \ref{VC} recovers the Volume Conjecture for the Turaev-Viro invariants for hyperbolic $3$-manifolds with totally geodesic boundary proposed by Chen and the author\,\cite{CY}. 

The main result of this paper is the following

\begin{theorem}\label{main} Conjecture \ref{VC} is true for all ideally triangulated $3$-manifold $(M,\mathcal T)$ with non-empty boundary with  sufficiently small cone angles $\boldsymbol\theta.$
\end{theorem}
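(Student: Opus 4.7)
The plan is to carry out a saddle-point analysis of the state sum defining $\mathrm{TV}_r(M,E,\mathbf{b}^{(r)})$, in the spirit of Chen and the author's approach to the Turaev--Viro Volume Conjecture. After rescaling colors via $\alpha_i = 2\pi a_i/r$ and noting that the cone-angle data $\theta_i$ are determined by $b_i^{(r)}/r$, I view the set of admissible colorings as lattice points in a polytope $\mathcal{A}\subset [0,2\pi]^{|E|}$ whose defining inequalities are exactly the combinatorial conditions under which the six rescaled colors of each tetrahedron $\Delta_s$ could be realized as dihedral angles of a truncated hyperideal tetrahedron.

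The main technical input is the asymptotic expansion of the quantum $6j$-symbol at $q = e^{2\pi i /r}$ due to Costantino and refined in subsequent work: when the six rescaled colors of $\Delta_s$ correspond to the dihedral angles of a genuine truncated hyperideal tetrahedron $\Delta_s(\boldsymbol{\alpha})$, the $6j$-symbol is, up to an oscillatory pre-factor of size $r^{-3/2}$, asymptotic to $\exp\bigl(\tfrac{r}{2\pi} V(\Delta_s(\boldsymbol{\alpha}))\bigr)$; outside this hyperideal regime the $6j$-symbol obeys a strictly smaller bound. The weight $\mathrm{H}(a_i, b_i^{(r)})$ at $q = e^{2\pi i/r}$ is a normalized sine whose magnitude, after rescaling, contributes a boundary term coupling $\alpha_i$ to the prescribed $\theta_i$. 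Combining these and replacing the Riemann sum by an integral, $\mathrm{TV}_r$ is, up to subexponential factors, of the form
$$\int_{\mathcal{A}} e^{(r/2\pi)\, S(\boldsymbol{\alpha};\boldsymbol{\theta})}\, d\boldsymbol{\alpha}$$
for an explicit action $S$ whose critical points in $\boldsymbol{\alpha}$ are, by the Schl\"afli formula, exactly the hyperbolic polyhedral metrics on $(M,\mathcal{T})$ with prescribed cone angles $\boldsymbol{\theta}$, and whose critical value equals $\mathrm{Vol}(M_{E_\theta})$.

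To finish I would invoke the rigidity theorem of Luo--Yang (\cite[Theorem 1.2(b)]{LY}) together with an implicit function argument: for sufficiently small $\boldsymbol{\theta}$ the critical point exists uniquely, depends smoothly on $\boldsymbol{\theta}$, lies in the interior of $\mathcal{A}$, and is non-degenerate (the non-degeneracy is the infinitesimal form of rigidity, namely the invertibility of the Jacobian of the cone-angle map). A standard steepest descent argument then produces an asymptotic of the shape $\mathrm{TV}_r(M,E,\mathbf{b}^{(r)}) \sim r^{\kappa}\, e^{(r/2\pi)\,\mathrm{Vol}(M_{E_\theta})}$ for some explicit power $\kappa$, whereupon taking $\tfrac{2\pi}{r}\log$ gives the conjectured limit.

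The principal obstacle is the rigorous control of the oscillatory sum outside a small neighborhood of the saddle: one must show that the contributions from non-hyperideal $6j$-tuples, as well as from hyperideal tuples far from the critical point, are strictly subdominant, and that oscillations near the saddle do not accidentally cancel the main term. The smallness of $\boldsymbol{\theta}$ enters essentially at this step, since it confines the critical point to an explicit region of $\mathcal{A}$ where both the $6j$-symbol asymptotics are best understood and quantitative Hessian estimates are tractable via perturbation from a reference configuration; extending the argument to arbitrary $\boldsymbol{\theta}$, including the totally geodesic boundary case $\boldsymbol{\theta}=(2\pi,\dots,2\pi)$ conjectured in \cite{CY}, would require a significantly more delicate global analysis of $S$ over all of $\mathcal{A}$.
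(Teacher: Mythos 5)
Your outline captures the right overall strategy (rescale colors, write the state sum as something governed by a potential function whose critical value is the volume, apply a saddle-point analysis, use rigidity to control the critical point), and your use of the Schl\"afli formula to identify critical points with polyhedral metrics is exactly the paper's Proposition \ref{crit}. But two steps that you gloss over are precisely where the real work lies, and as stated they are gaps.

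First, ``replacing the Riemann sum by an integral'' does not work naively. The summand $g_r^{\epsilon}(\mathbf a,\mathbf k)$ grows exponentially in $r$, so the difference between $\sum_{\mathbf a,\mathbf k}g_r^\epsilon$ and $\int g_r^\epsilon$ can itself be exponentially large, and you have no control over it. The paper instead multiplies by a smooth bump function to obtain a Schwartz function and applies the Poisson Summation Formula (Proposition \ref{Poisson}), expressing the state sum \emph{exactly} as $\sum_{(\mathbf m,\mathbf n)}\widehat{f_r}(\mathbf m,\mathbf n)$ plus an explicitly controlled error term. The $(\mathbf 0,\mathbf 0)$ Fourier coefficient is the integral you want; the nonzero Fourier coefficients are shown to be subdominant (Proposition \ref{other}) by shifting the integration contour in the imaginary direction determined by the sign of $m_1$ (or $n_s$). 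This contour shift is enabled by the concavity/convexity of $\mathrm{Im}\,\mathcal W^\epsilon$ (Proposition \ref{convexity}), which is established by an explicit Hessian computation at $\bigl(\pi,\dots,\pi,\tfrac{7\pi}{4},\dots,\tfrac{7\pi}{4}\bigr)$ and continuity --- another place the smallness of $\theta$ is used, not just for locating the critical point as you suggest.

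Second, because $\mathrm H(a_i,b_i)$ is a normalized \emph{sine}, it splits as a sum of two exponentials, so the potential $\mathcal W^\epsilon$ and its critical point $z^\epsilon$ actually come in $2^{|E|}$ families indexed by multi-signs $\epsilon\in\{1,-1\}^E$. Each one contributes a saddle with the \emph{same} critical value $2|T|\pi^2+2\sqrt{-1}\,\mathrm{Vol}(M_{E_\theta})$, so you cannot distinguish them by exponential order; you must verify that the sum of the $2^{|E|}$ leading coefficients does not vanish (Corollary \ref{5.8}). Your sketch produces ``an asymptotic of the shape $r^\kappa e^{(r/2\pi)\mathrm{Vol}}$'' without acknowledging this possible cancellation. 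The paper handles it by observing that at $\beta_1=\dots=\beta_{|E|}=\pi$ all $2^{|E|}$ terms are literally equal, hence the sum is nonzero, and then invoking continuity --- a third place where small $\theta$ is essential. Your alternative route to non-degeneracy via Luo--Yang rigidity plus an implicit function argument is plausible for the Hessian, but it does not by itself give the concavity needed for the contour-shift estimates nor the non-vanishing of the summed leading coefficient, so it would not replace the paper's Propositions \ref{convexity} and \ref{nonsingular} and Corollary \ref{5.8}.
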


In \cite[Proposition 6.14]{LY}, Luo and the author proved that, parametrized by the cone angles, hyperbolic polyhedral metrics on $(M, \mathcal T )$ with all the cone angles less than $\pi$ form a non-empty convex open polytope. In particular, when the cone angles are sufficiently small, there exists a unique hyperbolic polyhedral metric with these prescribed cone angles; and any hyperbolic polyhedral metric with cone angles less than  or equal to $\pi$ can be smoothly deformed in the space of hyperbolic polyhedral metrics to the hyperbolic polyhedral metric with all the cone angles equal to $0.$ It is expected that the space of all hyperbolic polyhedral metrics on $(M,\mathcal T)$ is connected so that each hyperbolic polyhedral metric can be smoothly deformed to the hyperbolic polyhedral metric with all the cone angles equal to $0.$ In \cite{Ko}, Kojima proved that every hyperbolic $3$-manifold $M$ with totally geodesic boundary admits an ideal triangulation such that each tetrahedron is either isometric to a truncated hyperideal tetrahedron or flat; and it is expected that every such $M$ admits a geometric ideal triangulation that each tetrahedron is truncated hyperideal. Therefore, for Kojima's ideal triangulations, if one could push the cone angles in Theorem \ref{main} from sufficiently small to $2\pi,$ then one solves Chen-Yang's Volume Conjecture\,\cite{CY} for the Turaev-Viro invariants for hyperbolic $3$-manifolds with totally geodesic boundary.

As an immediate consequence of Theorems \ref{=} and \ref{main}, we have

\begin{theorem}\label{main1.4} The Volume Conjecture of the relative Reshetikhin-Turaev invariants  \cite[Conjecture 1.1]{WY2} is true for all pairs $(D(M),D(E))$ with sufficiently small cone angles.
\end{theorem}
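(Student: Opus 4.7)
The proof plan is to combine Theorems~\ref{=} and~\ref{main}. At $q = e^{2\pi i/r}$ with $r$ odd, Theorem~\ref{=} gives
$$\mathrm{TV}_r(M,E,\mathbf b^{(r)}) = C_r \cdot \mathrm{RT}_r(D(M),D(E),\mathbf b^{(r)}),$$
where $C_r = 2^{\mathrm{rank}\,\mathrm H_2(M;\mathbb Z_2)}\bigl(2\sin(2\pi/r)/\sqrt r\bigr)^{-\chi(M)}$. The key elementary observation is that $|C_r|$ grows at most polynomially in $r$, so $(2\pi/r)\log|C_r| = O\bigl((\log r)/r\bigr) \to 0$ as $r\to\infty$. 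Applying $(2\pi/r)\log|\cdot|$ to both sides of the identity and letting $r\to\infty$ along odd integers therefore yields
$$\lim_{r\to\infty}\frac{2\pi}{r}\log\bigl|\mathrm{RT}_r(D(M),D(E),\mathbf b^{(r)})\bigr| = \lim_{r\to\infty}\frac{2\pi}{r}\log\bigl|\mathrm{TV}_r(M,E,\mathbf b^{(r)})\bigr|$$
whenever either limit exists.

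By Theorem~\ref{main}, provided the cone angles $\theta$ determined by $\mathbf b^{(r)}$ are sufficiently small, the right-hand side equals $\mathrm{Vol}(M_{E_\theta})$. To conclude, I would identify this with the geometric quantity predicted by \cite[Conjecture 1.1]{WY2} for the pair $(D(M),D(E))$. Since the hyperbolic polyhedral metric on $(M,\mathcal T)$ with cone angles $\theta$ has totally geodesic truncation boundary, doubling along $\partial M$ produces a hyperbolic polyhedral metric on $(D(M),D(\mathcal T))$ whose singular locus is $D(E)$, whose cone angles at the doubled edges are prescribed by the doubled colouring via the same limiting formula, and whose total volume is $2\,\mathrm{Vol}(M_{E_\theta})$. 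The asymptotic identity above then reads off as precisely the statement of \cite[Conjecture 1.1]{WY2} applied to $(D(M),D(E))$, in the normalization used there.

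The main step to verify is this last one: one must match the cone-angle/colouring conventions and the factor-of-two volume normalization between $(M,E)$ and its double so that the limit matches the prediction of \cite[Conjecture 1.1]{WY2}. All the analytic work has already been done in Theorem~\ref{main}, and the passage from relative Turaev--Viro to relative Reshetikhin--Turaev asymptotics is free because the discrepancy factor $C_r$ is subexponential; what remains is essentially bookkeeping, so the theorem indeed falls out as an immediate corollary of Theorems~\ref{=} and~\ref{main}.
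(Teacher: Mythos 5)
Your proof is correct and takes essentially the same approach as the paper, which indeed presents this theorem as an immediate consequence of Theorems~\ref{=} and~\ref{main} without spelling out the details. Your key observations — that the prefactor $C_r$ in Theorem~\ref{=} grows only polynomially in $r$ and hence is killed by the $(2\pi/r)\log$ normalization, and that doubling the hyperbolic polyhedral metric on $M_{E_\theta}$ along its totally geodesic truncation boundary yields a hyperbolic cone metric on $D(M)$ with singular locus $D(E)$, the same cone angles $\theta$, and volume $2\,\mathrm{Vol}(M_{E_\theta})$ — are exactly the points that make the deduction work; the factor of two in volume is absorbed by the $4\pi/r$ normalization used for Reshetikhin--Turaev invariants versus the $2\pi/r$ for Turaev--Viro, as you correctly flag.
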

\medskip

\noindent\textbf{Outline of the proof of Theorem \ref{main}.}  
We follow the guideline of the method pioneered by Ohtsuki and his collaborators\,\cite{O,OY,O3,O2}. In Proposition \ref{computation}, we compute the relative Turaev-Viro invariant of $(M,\mathcal T),$ writing them as a sum of values of a holomorphic function $f_r$ at integer points. The function $f_r$ comes from Faddeev's quantum dilogarithm function. Using Poisson Summation Formula, we in Proposition \ref{Poisson} write the invariants as a sum of the Fourier coefficients of $f_r$ computed  in Propositions \ref{4.2}. In Proposition \ref{crit} we show that the critical value of the functions in the leading Fourier coefficients has real part the volume of the manifold in the hyperbolic polyhedral metric. Then we estimate the leading Fourier coefficients in Sections \ref{leading} using the Saddle Point Method (Proposition \ref{saddle}). Finally, we estimate the non-leading Fourier coefficients and the error term respectively in Sections \ref{ot} and \ref{ee} showing that they are neglectable, and prove Theorem \ref{main} in Section \ref{pf}. 
\\

\noindent\textbf{Acknowledgments.} The author would like to thank Giulio Belletti, Francis Bonahon, Qingtao Chen, Xingshan Cui, Feng Luo and Ka Ho Wong for inspiring discussions.  The author is supported by NSF Grants DMS-1812008 and DMS-2203334.
\section{Relationship with the relative Reshetikhin-Turaev invariants}\label{TVRT}

We first recall the definition of the relative Reshetikhin-Turaev invariants following the skein theoretical approach\,\cite{BHMV, Li}, and focus on the $SO(3)$-theory and the values at the root of unity  $q=e^{\frac{2\pi\sqrt{-1}}{r}}$ for odd integers $r\geqslant 3.$

A framed link in an oriented $3$-manifold $M$ is a smooth embedding $L$ of a disjoint union of finitely many thickened circles $\mathrm S^1\times [0,\epsilon],$ for some $\epsilon>0,$ into $M.$ The Kauffman bracket skein module $\mathrm K_r(M)$ of $M$ is the $\mathbb C$-module generated by the isotopic classes of framed links in $M$  modulo the follow two relations: 

\begin{enumerate}[(1)]
\item  \emph{Kauffman Bracket Skein Relation:} \ $\cp{\includegraphics[width=1cm]{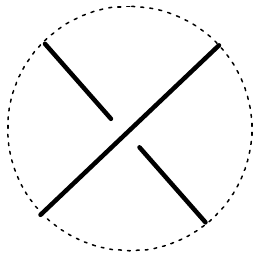}}\ =\ e^{\frac{\pi\sqrt{-1}}{r}}\ \cp{\includegraphics[width=1cm]{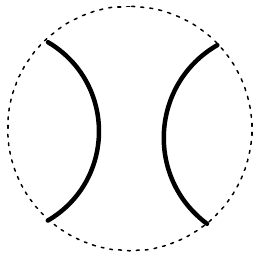}}\  +\ e^{-\frac{\pi\sqrt{-1}}{r}}\ \cp{\includegraphics[width=1cm]{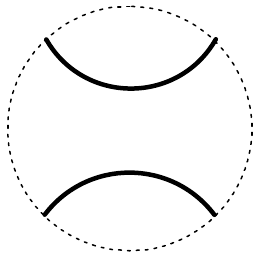}}.$ 

\item \emph{Framing Relation:} \ $L \cup \cp{\includegraphics[width=0.8cm]{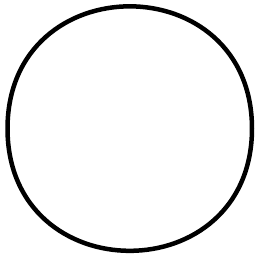}}=(-e^{\frac{2\pi\sqrt{-1}}{r}}-e^{-\frac{2\pi\sqrt{-1}}{r}})\ L.$ 
\end{enumerate}

There is a canonical isomorphism 
$$\langle\ \rangle:\mathrm K_r(\mathrm S^3)\to\mathbb C$$
defined by sending the empty link to $1.$ The image $\langle L\rangle$ of the framed link $L$ is called the Kauffman bracket of $L.$

Let $\mathrm K_r(A\times [0,1])$ be the Kauffman bracket skein module of the product of an annulus $A$ with a closed interval. For any link diagram $D$ in $\mathbb R^2$ with $k$ ordered components and $S_1, \dots, S_k\in \mathrm K_r(A\times [0,1]),$ let 
$$\langle S_1,\dots, S_k\rangle_D$$
be the complex number obtained by cabling $S_1,\dots, S_k$ along the components of $D$ considered as a element of $K_r(\mathrm S^3)$ then taking the Kauffman bracket $\langle\ \rangle.$

On $\mathrm K_r(A\times [0,1])$ there is a commutative multiplication induced by the juxtaposition of annuli, making it a $\mathbb C$-algebra; and as a $\mathbb C$-algebra $\mathrm K_r(A\times [0,1])  \cong \mathbb C[z],$ where $z$ is the core curve of the annulus  $A.$ For an integer $a\geqslant 0,$ let $e_a(z)$ be the $a$-th Chebyshev polynomial defined recursively by
$e_0(z)=1,$ $e_1(z)=z$ and $e_a(z)=ze_{a-1}(z)-e_{a-2}(z).$ Let 
$$\mathrm{I}_r=\{0,2,\dots,r-3\}$$
 be the set of even integers in between $0$ and $r-2.$ Then the Kirby coloring $\Omega_r\in\mathrm K_r(A\times [0,1])$ is defined by 
$$\Omega_r=\mu_r\sum_{a\in \mathrm{I}_r}[a+1]e_{a},$$
where
 $$\mu_r=\frac{2\sin\frac{2\pi}{r}}{\sqrt r}$$ 
 and $[a]$ is the quantum integer defined by
$$[a]=\frac{e^{\frac{2a\pi\sqrt{-1}}{r}}-e^{-\frac{2a\pi\sqrt{-1}}{r}}}{e^{\frac{2\pi\sqrt{-1}}{r}}-e^{-\frac{2\pi\sqrt{-1}}{r}}}.$$

Let $M$ be a closed oriented $3$-manifold and let $L$ be a framed link in $M$ with $n$ components. Suppose $M$ is obtained from $S^3$ by doing a surgery along a framed link $L',$ $D_{L'}$ is a standard diagram of $L'$ (ie, the blackboard framing of $D_{L'}$ coincides with the framing of $L'$). Then $L$ adds extra components to $D_{L'}$ forming a linking diagram $D_{L\cup L'}$ with $D_L$ and $D_{L'}$ linking in possibly a complicated way. Let
$U_+$ be the diagram of the unknot with framing $1,$ $\sigma(L')$ be the signature of the linking matrix of $L'$ and $\mathbf b=(b_1,\dots,b_n)$ be a multi-element of $I_r.$ Then the $r$-th \emph{relative Reshetikhin-Turaev invariant of $M$ with $L$ colored by $\mathbf b$} is defined as
\begin{equation}\label{RT}
\mathrm{RT}_r(M,L,\mathbf b)=\mu_r \langle e_{b_1},\dots,e_{b_n}, \Omega_r, \dots, \Omega_r\rangle_{D_{L\cup L'}}\langle \Omega_r\rangle _{U_+}^{-\sigma(L')}.
\end{equation}

\begin{proof}[Sketch of the proof of Theorem \ref{=}] We focus on the case that $q=e^{\frac{2\pi i}{r}},$ and the case that  $q=e^{\frac{\pi i}{r}}$ is similar.

Consider the handle decomposition of $M$ dual to the ideal triangulation $\mathcal T,$ namely, the $2$-handles come from a tubular neighborhood of the edges, the $1$-handles come from a tubular neighborhood of the farces and the $0$-handles come from the complement of the $1$- and $2$-handles. Following the idea of Roberts \cite{R}, we construct the following quantity $CM_r(M,E,\mathbf b).$ Let $\{\epsilon_1,\dots, \epsilon_{|E|}\}$ be the attaching curves of the $2$-handles and let $\{\delta_1,\dots, \delta_{|F|}\}$ be the meridians of the $1$-handles. Thicken these curves to bands parallel to the surface of the $1$-skeleton $H$ and push each $\epsilon_i$ slightly into $H$ and circulate it by a framed trivial loop $\gamma_i.$ Embed $H$ arbitrarily into $S^3,$  cable each of the image of the $\epsilon$- and $\delta$-bands by the Kirby coloring $\Omega_r$ and cable the image of $\gamma_i$ by the $b_i$-th Chebyshev polynomial. In this way, we get an element $S_{(M,E,\mathbf b)}$ in $K_r(S^3),$ and we define
$$CM_r(M,E,\mathbf b)=\mu_r^{|T|}\langle S_{(M,E,\mathbf b)}\rangle.$$

On the one hand, since each face of $\mathcal T$ has three edges, each $\delta$-band encloses exactly three $\epsilon$-bands (see \cite[Figure 11]{R}).
Writing 
$$\Omega_r=\mu_r\sum_{a\in \mathrm{I}_r}[a+1]e_{a}$$
and applying \cite[Lemma 3.3]{DKY} to each $\delta$-band, we have
\begin{equation}\label{CM=TV}
\begin{split}
CM_r(M,E,\mathbf b)=&\mu_r^{|T|-|E|+|F|}\sum_{\mathbf a}\prod_{i=1}^{|E|} \mathrm H(a_i, b_i)\prod_{s=1}^{|T|}\bigg|\begin{matrix} a_{s_1} & a_{s_2}  & a_{s_3} \\  a_{s_4}  &  a_{s_5}  &  a_{s_6}  \end{matrix} \bigg|,\\
=&2^{-\mathrm{rank H}(M;\mathbb Z_2)}\mu_r^{\chi(M)}\mathrm{TV_r}(M,E,\mathbf b),
\end{split}
\end{equation}
where $\mathbf c$ runs over all the $r$-admissible colorings of $(M,\mathcal T)$ with even integers and the last equality comes from \cite[Lemma A.4, Theorem 2.9 and its proof]{DKY}.

On the other hand, the image of the union of the $\epsilon$- and $\delta$-bands form a surgery diagram of the $3$-manifold
$D(M)\#(S^2\times S^1)^{\# (|T|-1)}$ and the signature of the linking matrix  equals zero as argued in \cite[Proof of Theorem 3.4]{R}. Note that  the solid tori attached along the $\epsilon$-bands are the double of the $2$-handles, and each $\gamma_i$ is isotopic to the meridian of a tubular neighborhood of $\epsilon_i,$ hence is isotopic to the core of the solid torus attached to it, which is the double of the edge $e_i$ of $\mathcal T.$ Therefore,
\begin{equation}\label{CM=RT}
\begin{split}
CM_r(M,E,\mathbf b)=&\mu_r^{|T|-1}\mathrm{RT_r}(D(M)\#(S^2\times S^1)^{\# (|T|-1)},D(E),\mathbf b)\\
=&\mathrm{RT_r}(D(M),D(E),\mathbf b),
\end{split}
\end{equation}
where the last equality comes from the fact that 
$$\mathrm{RT}_r(M_1\#M_2,L_1\cup L_2, (\mathbf b_1,\mathbf b_2))=\mu_r^{-1}\mathrm{RT}_r(M_1,L_1,\mathbf b_1)\cdot\mathrm{RT}_r(M_2,L_2,\mathbf b_2)$$
 and that $\mathrm{RT}_r(S^2\times S^1)=1.$

From (\ref{CM=TV}) and (\ref{CM=RT}), the result follows.
\end{proof}


\section{Preliminaries}

\subsection{Hyperbolic polyhedral metrics}

A \emph{hyperbolic polyhedral metric}\,\cite{Luo, LY} on an ideally triangulated $3$-manifold $(M,\mathcal T)$ is obtained by replacing each tetrahedron in $\mathcal T$ by a truncated hyperideal tetrahedron (see Section \ref{CCS}) and replacing the gluing homeomorphisms between pairs of the faces  by isometries. The \emph{cone angle} at an edge is the sum of the dihedral angles of the truncated hyperideal tetrahedra around the edge. If all the cone angles are equal to $2\pi,$ then the hyperbolic polyhedral metric gives a hyperbolic metric on $M$ with totally geodesic boundary. 

 In \cite[Theorem 1.2 (b)]{LY}, Luo and the author proved that hyperbolic polyhedral metrics on $(M, \mathcal T )$ are rigid in the sense that they are up to isometry determined by their cone angles; and in \cite[Proposition 6.14]{LY}, they proved that hyperbolic polyhedral metrics on $(M, \mathcal T )$ with all cone angles less than $\pi$ form a non-empty convex open polytope when parametrized by the cone angles. In particular, when the cone angles are sufficiently small, there exists a unique hyperbolic polyhedral metric with these prescribed cone angles; and any hyperbolic polyhedral metric with cone angles less than or equal to $\pi$ can be smoothly deformed in the space of hyperbolic polyhedral metrics to the hyperbolic polyhedral metric with all the cone angles equal to $0.$ It is expected that the space of all  hyperbolic polyhedral metrics on $(M,\mathcal T)$ is connected so that each hyperbolic polyhedral metric can be smoothly deformed to the hyperbolic polyhedral metric with all the cone angles equal to $0.$

\subsection{Truncated hyperideal tetrahedra}\label{CCS}

We first recall some of the basic results on hyperideal tetrahedra. Following \cite{BB} and \cite{Fu}, a truncated
\emph{hyperideal tetrahedron} $\Delta$ in $\mathbb{H}^3$ is a
compact convex polyhedron that is diffeomorphic to a truncated
tetrahedron in $\mathbb{E}^3$ with four hexagonal faces $\{H_1,H_2,H_3,H_4\}$ isometric to
right-angled hyperbolic
 hexagons and four triangular faces $\{T_1,T_2,T_3,T_4\}$ isometric to hyperbolic triangles (see Figure \ref{hyperideal}). 
\begin{figure}[htbp]
\centering
\includegraphics[scale=0.3]{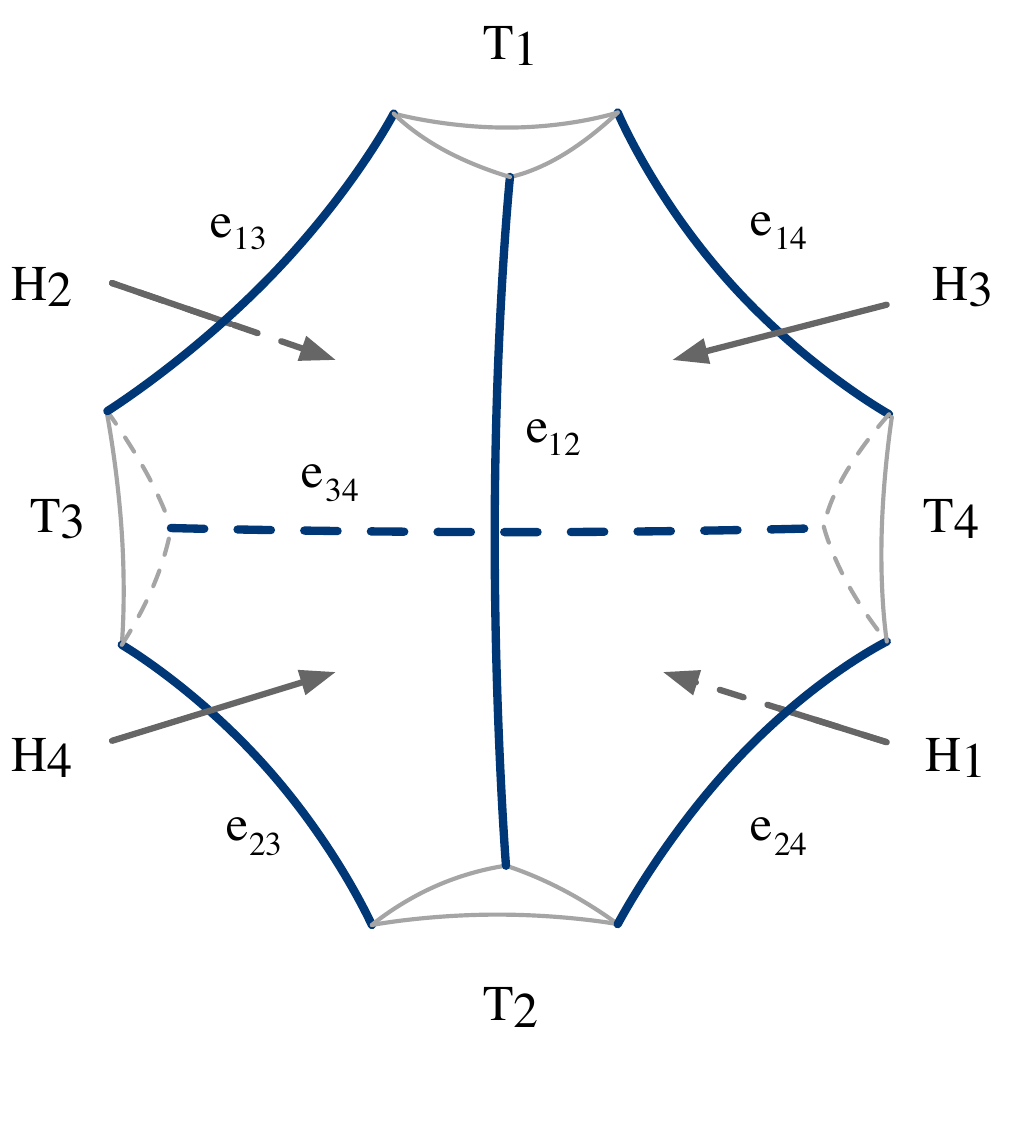}
\caption{}
\label{hyperideal} 
\end{figure}
An \emph{edge} in a hyperideal tetrahedron is the
 intersection of two hexagonal faces and the \emph{dihedral angle} at an edge is the angle between the two hexagonal faces
   adjacent to it. The angle between a hexagonal face and
   a triangular face is always $\frac{\pi}{2}.$ Let $e_{ij}$ be the edge connecting the triangular faces $T_i$ and $T_j,$ and let $\theta_{ij}$ and $l_{ij}$ respectively be the dihedral angle at and edge length of $e_{ij}.$ Then by the Cosine Law of hyperbolic triangles and right-angled hyperbolic hexagons, we have
\begin{equation}\label{cos1}
\cosh\l_{ij}=\frac{c_{kl}+c_{ik}c_{jk}+c_{il}c_{jl}+(c_{ik}c_{jl}+c_{il}c_{jk})c_{ij}-c_{kl}c_{ij}^2}{\sqrt{-1+c_{ij}^2+c_{ik}^2+c_{il}^2+2c_{ij}c_{ik}c_{il}}\sqrt{-1+c_{ij}^2+c_{jk}^2+c_{jl}^2+2c_{ij}c_{jk}c_{jl}}},
\end{equation}
where $\{i,j,k,l\}=\{1,2,3,4\}$ and $c_{ij}=\cos\theta_{ij};$
and
\begin{equation}\label{cos2}
\cos\theta_{kl}=\frac{ch_{ij}+ch_{ik}ch_{il}+ch_{jk}ch_{jl}+(ch_{ik}ch_{jl}+ch_{il}ch_{jk})ch_{ij}-ch_{kl}ch_{ij}^2}{\sqrt{-1+ch_{ij}^2+ch_{ik}^2+ch_{jk}^2+2ch_{ij}ch_{ik}ch_{jk}}\sqrt{-1+ch_{ij}^2+ch_{il}^2+ch_{jl}^2+2ch_{ij}ch_{il}ch_{jl}}},
\end{equation}
where $ch_{ij}=\cosh\l_{ij}.$

By \cite{BB}, a truncated hyperideal tetrahedron is up to isometry determined by its six dihedral angles $\{\theta_1,\dots, \theta_6\},$ and by (\ref{cos2}) is determined by its six edge lengths $\{l_1,\dots,l_6\}.$ 

\begin{definition}[\cite{Luo, LY}]\label{cov}
Let $\mathrm{Vol}$ and $\{\theta_1,\dots,\theta_6\}$ respectively be the volume  and the dihedral angles of a hyperideal tetrahedron $\Delta$ as functions of the edge lengths $\{l_1,\dots,l_6\}.$ The co-volume function $\mathrm{Cov}$  is defined by
$$\mathrm{Cov}(l_1,\dots,l_6)=\mathrm{Vol}+\frac{1}{2}\sum_{i=1}^6\theta_i\cdot l_i .$$
\end{definition}

The key property of the co-volume function is the following

\begin{lemma}\label{Sch} For $i\in \{1,\dots,6\},$
$$\frac{\partial \mathrm{Cov}}{\partial l_i}=\frac{\theta_i}{2}.$$
\end{lemma}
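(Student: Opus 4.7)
The plan is to deduce this identity directly from the classical Schläfli formula for compact hyperbolic polyhedra, which asserts that under any smooth deformation of a compact polyhedron in $\mathbb H^3$,
$$d\,\mathrm{Vol} = -\frac{1}{2}\sum_{e}\ell_e\, d\alpha_e,$$
where the sum ranges over all edges $e$ of the polyhedron, $\ell_e$ is the length of $e$, and $\alpha_e$ is the interior dihedral angle at $e$. Since a truncated hyperideal tetrahedron $\Delta$ is a compact convex polyhedron in $\mathbb H^3$, the formula applies to it. The Bao--Bonahon rigidity result \cite{BB}, which was already invoked to declare $\Delta$ to be determined by the six-tuple $(l_1,\dots,l_6)$, guarantees that Vol, $\mathrm{Cov}$, and the $\theta_i$ are smooth functions of the $l_j$'s on an open set of admissible edge lengths, so all derivatives below are meaningful.

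The next step is to identify which edges contribute. The polyhedron $\Delta$ has two kinds of edges: the six ``main'' edges $e_1,\dots, e_6$, each the intersection of two hexagonal faces, whose lengths are $l_1,\dots, l_6$ and whose dihedral angles are $\theta_1,\dots,\theta_6$; and the twelve ``truncation'' edges, each the intersection of a hexagonal face with a triangular face. As recalled in the paragraph preceding \eqref{cos1}, the dihedral angle at every truncation edge is identically $\pi/2$, hence $d\alpha_e = 0$ at each such edge under any deformation. So the Schläfli formula collapses to
$$d\,\mathrm{Vol} = -\frac{1}{2}\sum_{i=1}^{6} l_i\, d\theta_i.$$

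I would then differentiate the definition $\mathrm{Cov} = \mathrm{Vol} + \tfrac{1}{2}\sum_{i=1}^{6}\theta_i l_i$ as a function of $(l_1,\dots,l_6)$ and substitute:
$$d\,\mathrm{Cov} \;=\; d\,\mathrm{Vol} + \frac{1}{2}\sum_{i=1}^{6}\theta_i\, dl_i + \frac{1}{2}\sum_{i=1}^{6} l_i\, d\theta_i \;=\; \frac{1}{2}\sum_{i=1}^{6}\theta_i\, dl_i,$$
so the $l_i$-variation terms in $d\theta_i$ cancel cleanly against the Schläfli contribution. Reading off the coefficient of $dl_i$ yields $\partial \mathrm{Cov}/\partial l_i = \theta_i/2$, as claimed.

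There is no real obstacle here, since the only nontrivial input is the Schläfli formula; the observation that the truncation edges drop out (because their dihedral angles are fixed at $\pi/2$) is what makes $\mathrm{Cov}$ a genuine Legendre-type transform between the dihedral-angle and edge-length parametrizations of the moduli space of truncated hyperideal tetrahedra.
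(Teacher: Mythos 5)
Your proof is correct and takes essentially the same route as the paper: both deduce the identity from Schläfli's formula, and the cancellation you display via $d\,\mathrm{Cov} = d\,\mathrm{Vol} + \tfrac{1}{2}\sum\theta_i\,dl_i + \tfrac{1}{2}\sum l_i\,d\theta_i$ is exactly the paper's chain-rule/product-rule computation written in differential form. Your explicit remark that the twelve truncation edges drop out of the Schläfli sum because their dihedral angles are constant at $\pi/2$ is a detail the paper leaves implicit when it writes $\partial\,\mathrm{Vol}/\partial\theta_i = -l_i/2$, but it does not change the argument.
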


\begin{proof} By the Schl\"afli formula, we have
$$\frac{\partial \mathrm{Vol}}{\partial \theta_i}=-\frac{l_i}{2}.$$
Then by the chain rule and the product rue, we have
\begin{equation*}
\begin{split}
\frac{\partial \mathrm{Cov}}{\partial l_i}=&\sum_{k=1}^6\frac{\partial \mathrm{Vol}}{\partial\theta_k}\cdot\frac{\partial \theta_k}{\partial l_i}+\frac{1}{2}\sum_{k=1}^6\frac{\partial}{\partial l_i}\Big(\theta_k\cdot l_k\Big)\\
=&-\sum_{k=1}^6\frac{l_k}{2}\cdot \frac{\partial \theta_k}{\partial l_i}+\frac{1}{2}\sum_{k=1}^6\cdot\frac{\partial \theta_k}{\partial l_i}\cdot l_k+\frac{\theta_i}{2}=\frac{\theta_i}{2}.
\end{split}
\end{equation*}
\end{proof}


\subsection{Dilogarithm and quantum dilogarithm functions}\label{6jsymbol}

Let $\log:\mathbb C\setminus (-\infty, 0]\to\mathbb C$ be the standard logarithm function defined by
$$\log z=\log|z|+\sqrt{-1}\arg z$$
with $-\pi<\arg z<\pi.$
 
The dilogarithm function $\mathrm{Li}_2: \mathbb C\setminus (1,\infty)\to\mathbb C$ is defined by
$$\mathrm{Li}_2(z)=-\int_0^z\frac{\log (1-u)}{u}du$$
where the integral is along any path in $\mathbb C\setminus (1,\infty)$ connecting $0$ and $z,$ which is holomorphic in $\mathbb C\setminus [1,\infty)$ and continuous in $\mathbb C\setminus (1,\infty).$

By eg. Zagier\,\cite{Z}), on the unit circle $\big\{ z=e^{2\sqrt{-1}\theta}\,\big|\,0 \leqslant \theta\leqslant\pi\big\},$ 
\begin{equation}\label{dilogLob}
\mathrm{Li}_2(e^{2\sqrt{-1}\theta})=\frac{\pi^2}{6}+\theta(\theta-\pi)+2\sqrt{-1}\Lambda(\theta),
\end{equation}
where  $\Lambda:\mathbb R\to\mathbb R$  is the Lobachevsky function defined by
$$\Lambda(\theta)=-\int_0^\theta\log|2\sin t|dt,$$
which is an odd function of period $\pi.$ See eg. Thurston's notes\,\cite[Chapter 7]{T}.


The following variant of Faddeev's quantum dilogarithm functions\,\cite{F, FKV} will play a key role in the proof of the main result. 
Let $r\geqslant 3$ be an odd integer. Then the following contour integral
\begin{equation}
\varphi_r(z)=\frac{4\pi\sqrt{-1}}{r}\int_{\Omega}\frac{e^{(2z-\pi)x}}{4x \sinh (\pi x)\sinh (\frac{2\pi x}{r})}\ dx
\end{equation}
defines a holomorphic function on the domain $$\Big\{z\in \mathbb C \ \Big|\ -\frac{\pi}{r}<\mathrm{Re}z <\pi+\frac{\pi}{r}\Big\},$$  
  where the contour is
$$\Omega=\big(-\infty, -\epsilon\big]\cup \big\{z\in \mathbb C\ \big||z|=\epsilon, \mathrm{Im}z>0\big\}\cup \big[\epsilon,\infty\big),$$
for some $\epsilon\in(0,1).$
Note that the integrand has poles at $n\sqrt{-1},$ $n\in\mathbb Z,$ and the choice of  $\Omega$ is to avoid the pole at $0.$
\\

The function $\varphi_r(z)$ satisfies the following fundamental properties, whose proof can be found in \cite[Section 2.3]{WY}.

\begin{lemma}\label{factorial} Let $\{n\}=q^n-q^{-n}$ and $\{n\}!=\prod_{k=1}^n\{k\}.$ 
\begin{enumerate}[(1)]
\item For $0\leqslant n \leqslant r-2,$
\begin{equation}
\{n\}!=e^{\frac{r}{4\pi\sqrt{-1}}\Big(-2\pi\big(\frac{2\pi n}{r}\big)+\big(\frac{2\pi}{r}\big)^2(n^2+n)+\varphi_r\big(\frac{\pi}{r}\big)-\varphi_r\big(\frac{2\pi n}{r}+\frac{\pi}{r}\big)\Big)}.
\end{equation}
\item For $\frac{r-1}{2}\leqslant n \leqslant r-2,$
\begin{equation} \label{move}
\{n\}!=2e^{\frac{r}{4\pi\sqrt{-1}}\Big(-2\pi\big(\frac{2\pi n}{r}\big)+\big(\frac{2\pi }{r}\big)^2(n^2+n)+\varphi_r\big(\frac{\pi}{r}\big)-\varphi_r\big(\frac{2\pi n}{r}+\frac{\pi}{r}-\pi\big)\Big)}.
\end{equation}
\end{enumerate}
\end{lemma}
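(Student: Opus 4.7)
The plan is to reduce both identities to telescoping a single shift identity for $\varphi_r$ under $z\mapsto z+2\pi/r$, and then to derive the factor of $2$ in (2) from an additional $\pi$-shift identity whose value at $z_n$ is exactly $\log 2$.

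First I will establish the shift identity
$$\varphi_r\!\left(z+\tfrac{2\pi}{r}\right)-\varphi_r(z)=-\frac{4\pi i}{r}\log\!\left(1-e^{i(2z+2\pi/r)}\right)$$
for $z$ in the strip $-\pi/r<\operatorname{Re}z<\pi-\pi/r$. Subtracting the integrals defining the two sides, the numerator acquires the factor $e^{4\pi x/r}-1=2e^{2\pi x/r}\sinh(2\pi x/r)$, which cancels one of the hyperbolic factors in the denominator and reduces the difference to $\frac{2\pi i}{r}\int_\Omega\frac{e^{(2z+2\pi/r-\pi)x}}{x\sinh(\pi x)}\,dx$. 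I will evaluate this by closing the contour in the upper half-plane and summing the simple pole residues at $x=im$, $m\ge 1$ (using $\sinh'(\pi x)|_{im}=\pi(-1)^m$); the resulting series $\sum_{m\ge1}\frac{(-1)^m e^{iwm}}{i\pi m}=\frac{-\log(1+e^{iw})}{i\pi}$ gives the claimed right-hand side after substituting $w=2z+2\pi/r-\pi$ and using $1+e^{iw}=1-e^{i(2z+2\pi/r)}$.

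Next I will telescope. Set $z_k:=(2k+1)\pi/r$. Iterating the shift identity from $z_0=\pi/r$ to $z_n$ yields
$$\varphi_r(\tfrac{\pi}{r})-\varphi_r(z_n)=\frac{4\pi i}{r}\sum_{k=1}^n\log\!\left(1-e^{4\pi ik/r}\right),$$
valid so long as all intermediate $z_k$ lie in the strip of validity, i.e. $n\le(r-1)/2$. On the algebraic side I factor
$$\{k\}=q^k-q^{-k}=-q^{-k}(1-q^{2k}), \qquad \{n\}!=(-1)^n e^{-i\pi n(n+1)/r}\prod_{k=1}^n(1-e^{4\pi ik/r}).$$
Taking $\log$, multiplying by $r/(4\pi i)$, and matching terms, the algebraic prefactor $(-1)^n e^{-i\pi n(n+1)/r}$ contributes exactly $\frac{r}{4\pi i}\bigl(-2\pi\cdot\frac{2\pi n}{r}+(\frac{2\pi}{r})^2(n^2+n)\bigr)$ to the exponent, proving (1) for $n\le(r-1)/2$. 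For $(r-1)/2<n\le r-2$ the point $z_n$ lies outside the natural domain, and (1) is then to be interpreted via the analytic continuation of $\varphi_r$ past $\operatorname{Re}z=\pi+\pi/r$.

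To derive (2), I compare $\varphi_r(z_n)$ with the in-domain value $\varphi_r(z_n-\pi)$. A parallel computation using $1-e^{-2\pi x}=2e^{-\pi x}\sinh(\pi x)$ reduces the difference to
$$\varphi_r(z)-\varphi_r(z-\pi)=\frac{2\pi i}{r}\int_\Omega\frac{e^{(2z-2\pi)x}}{x\sinh(2\pi x/r)}\,dx.$$
At $z=z_n$ the key observation is that $e^{(2z_n-2\pi)\cdot irm/2}=e^{i\pi m(2n+1-r)}=1$ at every pole $x=irm/2$, since $r$ is odd and hence $2n+1-r$ is even; summing the residues $\frac{(-1)^m}{i\pi m}$ via $\sum_{m\ge1}\frac{(-1)^m}{m}=-\log 2$ yields $\varphi_r(z_n)-\varphi_r(z_n-\pi)=-\frac{4\pi i}{r}\log 2$, and plugging this into the exponent of (1) produces the factor $e^{\log 2}=2$ in front of the exponential, which is (2). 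The principal technical subtlety is branch-tracking and contour-deformation: the integrands do not decay uniformly along large semicircles, so the upper half-plane closure must be justified along a suitable sequence of rectangles (taken midway between consecutive poles), and one must verify that the analytic continuation of $\varphi_r$ implicitly used in (1) for $n>(r-1)/2$ is compatible with the $\pi$-shift identity used to extract (2).
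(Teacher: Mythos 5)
Your plan is correct and matches the standard route to these identities; the paper itself does not supply a proof of Lemma \ref{factorial} but refers the reader to [WY, Section 2.3], and what you propose is the standard shift-identity/telescoping argument used there and in related work (Ohtsuki, Chen--Murakami, Wong--Yang). The derivation of the $2\pi/r$-shift identity via cancellation of $\sinh(2\pi x/r)$ and the residue sum at $x=im$ is right, the algebraic bookkeeping $\{n\}!=(-1)^n e^{-i\pi n(n+1)/r}\prod_{k=1}^n(1-e^{4\pi ik/r})$ is correct and does match the exponent $-2\pi\cdot\frac{2\pi n}{r}+(\frac{2\pi}{r})^2(n^2+n)$, and the evaluation $\varphi_r(z_n)-\varphi_r(z_n-\pi)=-\frac{4\pi i}{r}\log 2$ (using that $r$ odd makes $2n+1-r$ even) correctly produces the factor $2$ in (2).

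One point you flag as a "subtlety" deserves to be treated as essential rather than optional: the contour integral representation of $\varphi_r(z)-\varphi_r(z-\pi)$ converges only on the strip $\pi-\pi/r<\operatorname{Re}z<\pi+\pi/r$, which among the points $z_n=(2n+1)\pi/r$ picks out \emph{only} $n=(r-1)/2$. So the residue computation you give proves the $\pi$-shift identity directly only at the single value $n=(r-1)/2$; for $n>(r-1)/2$ you must argue that the analytic continuation of $\varphi_r$ defined by iterating the $2\pi/r$-shift (which is what gives meaning to $\varphi_r(z_n)$ in (1) for $n>(r-1)/2$) agrees with the extension defined by the $\pi$-shift. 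This is true and not hard --- both extensions are holomorphic continuations of $\varphi_r$ from its native strip, the relevant branch points at $z\in\pi\mathbb Z-\pi/r$ are never hit as you move along the real axis from $\pi$ to $(2r-3)\pi/r<2\pi$, and uniqueness of analytic continuation in a simply connected region gives the compatibility --- but it is exactly this compatibility that makes (2) follow from (1) for the full range $(r-1)/2\leqslant n\leqslant r-2$, so it should be spelled out rather than left as a remark. With that filled in, the proof is complete.
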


We consider (\ref{move}) because there are poles in $(\pi,2\pi),$ and to avoid the poles we move the variables to $(0,\pi)$ by subtracting $\pi.$

The function $\varphi_r(z)$ and the dilogarithm function are closely related as follows.

\begin{lemma}\label{converge}  \begin{enumerate}[(1)]
\item For every $z$ with $0<\mathrm{Re}z<\pi,$ 
\begin{equation}\label{conv}
\varphi_r(z)=\mathrm{Li}_2(e^{2\sqrt{-1}z})+\frac{2\pi^2e^{2\sqrt{-1}z}}{3(1-e^{2\sqrt{-1}z})}\frac{1}{r^2}+O\Big(\frac{1}{r^4}\Big).
\end{equation}
\item For every $z$ with $0<\mathrm{Re}z<\pi,$ 
\begin{equation}\label{conv}
\varphi_r'(z)=-2\sqrt{-1}\log(1-e^{2\sqrt{-1}z})+O\Big(\frac{1}{r^2}\Big).
\end{equation}
\item \cite[Formula (8)(9)]{O2}
$$\varphi_r\Big(\frac{\pi}{r}\Big)=\mathrm{Li}_2(1)+\frac{2\pi\sqrt{-1}}{r}\log\Big(\frac{r}{2}\Big)-\frac{\pi^2}{r}+O\Big(\frac{1}{r^2}\Big).$$
\end{enumerate}\end{lemma}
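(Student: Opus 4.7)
The plan is to insert the Taylor expansion of $1/\sinh(2\pi x/r)$ into the contour integral defining $\varphi_r(z)$, and then identify the resulting integrals with the dilogarithm and with an elementary rational expression via the residue theorem. Specifically, writing
\begin{equation*}
\frac{1}{\sinh(2\pi x/r)} = \frac{r}{2\pi x} - \frac{\pi x}{3r} + O\Big(\frac{1}{r^3}\Big),
\end{equation*}
valid for $2\pi x/r$ in a fixed neighborhood of $0$, and substituting into the integrand will give
\begin{equation*}
\varphi_r(z) = \frac{\sqrt{-1}}{2}\int_\Omega \frac{e^{(2z-\pi)x}}{x^2\sinh(\pi x)}\,dx \;-\; \frac{\pi^2\sqrt{-1}}{3r^2}\int_\Omega \frac{e^{(2z-\pi)x}}{\sinh(\pi x)}\,dx \;+\; O\Big(\frac{1}{r^4}\Big).
\end{equation*}
The main preliminary point is to justify this as a genuine asymptotic expansion: although the substitution is only valid pointwise for $|x|\ll r$, the rapid decay of $e^{(2z-\pi)x}/\sinh(\pi x)$ along $\Omega$ (using $0<\mathrm{Re}(z)<\pi$) shows that the contributions from $|x|\gtrsim r^{1/2}$ are negligible, so the remainder is indeed $O(r^{-4})$ uniformly on compact subsets of the strip.

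For the leading integral, I close $\Omega$ by a large semicircle in the upper half plane (taking $\mathrm{Im}(z)>0$ first and extending by holomorphy). The poles of $1/(x^2\sinh(\pi x))$ inside are simple at $x=n\sqrt{-1}$ for $n\geqslant 1$, with residue $e^{(2z-\pi)n\sqrt{-1}}/(\pi(-n^2)(-1)^n)$; summing by residues produces exactly $-2\sqrt{-1}\,\sum_{n\geqslant 1} e^{2n\sqrt{-1}z}/n^2 = -2\sqrt{-1}\,\mathrm{Li}_2(e^{2\sqrt{-1}z})$, which yields the main term $\mathrm{Li}_2(e^{2\sqrt{-1}z})$. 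For the second integral, the same contour argument gives $\int_\Omega e^{(2z-\pi)x}/\sinh(\pi x)\,dx = 2\sqrt{-1}\,e^{2\sqrt{-1}z}/(1-e^{2\sqrt{-1}z})$ (the alternating signs from $\cosh(n\pi\sqrt{-1}) = (-1)^n$ cancel $e^{-n\pi\sqrt{-1}}$), producing the correction $\tfrac{2\pi^2 e^{2\sqrt{-1}z}}{3(1-e^{2\sqrt{-1}z})}\cdot r^{-2}$. This establishes part (1). Part (2) then follows immediately: the asymptotic in (1) is locally uniform in $z$ on the strip $0<\mathrm{Re}(z)<\pi$ (equivalently, $\varphi_r-\mathrm{Li}_2(e^{2\sqrt{-1}\,\cdot})$ is a sequence of holomorphic functions tending to $0$), so differentiating and using $\tfrac{d}{dz}\mathrm{Li}_2(e^{2\sqrt{-1}z}) = -2\sqrt{-1}\log(1-e^{2\sqrt{-1}z})$ gives the claimed expansion for $\varphi_r'(z)$ with error $O(r^{-2})$.

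Part (3) is the delicate one, and is precisely the reason Ohtsuki's formulas are cited: at $z=\pi/r$ the point is on the boundary of the strip, and the implicit constant in the error term of part (1) blows up because the geometric series $e^{2\sqrt{-1}z}/(1-e^{2\sqrt{-1}z})$ acquires a pole at $z=0$. The plan is therefore to follow Ohtsuki \cite{O2} directly: use the Lobachevsky representation \eqref{dilogLob} to write $\mathrm{Li}_2(e^{2\pi\sqrt{-1}/r}) = \pi^2/6 + (\pi/r)(\pi/r-\pi) + 2\sqrt{-1}\Lambda(\pi/r)$, expand $\Lambda(\pi/r) = -(\pi/r)\log(2\pi/r) + \pi/r + O(r^{-3})$ from the definition, and then collect the $\log r$, $1/r$ and constant contributions, tracking the additional boundary-layer corrections coming from the $1/r^2$-term of part (1) evaluated at $z=\pi/r$. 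The main obstacle throughout is this last step, since the naive substitution $z=\pi/r$ into (1) is not valid; controlling the error uniformly as $z\to 0$ requires going back to the contour integral directly and performing the asymptotic analysis with $z=\pi/r$ built in from the start.
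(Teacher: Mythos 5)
Your proof is correct in its essentials, and since the paper states this lemma without giving a proof (citing \cite{O2} only for part (3)), what you have written is essentially the expected derivation. The substitution
\begin{equation*}
\frac{1}{\sinh(2\pi x/r)} = \frac{r}{2\pi x} - \frac{\pi x}{3r} + O\Big(\frac{x^3}{r^3}\Big)
\end{equation*}
into the contour integral, followed by the residue computation at $x = n\sqrt{-1}$ with $n\geqslant 1$, does produce the leading term $\mathrm{Li}_2(e^{2\sqrt{-1}z})$ from $-2\sqrt{-1}\sum_{n\geqslant 1}e^{2n\sqrt{-1}z}/n^2$, and the correction $\tfrac{2\pi^2 e^{2\sqrt{-1}z}}{3(1-e^{2\sqrt{-1}z})}\,r^{-2}$ from the geometric series; I have rechecked the signs and the $(-1)^n$ cancellations and they are right. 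Part (2) does follow by Cauchy's estimates on compact subsets of the strip, given locally uniform $O(r^{-2})$ error in (1).

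A few points to tighten. First, the remainder in the Taylor expansion is $O(x^3/r^3)$, not $O(1/r^3)$ as written; this is exactly why you need the cutoff and the exponential decay of $e^{(2z-\pi)x}/\sinh(\pi x)$ to conclude the integrated remainder is $O(r^{-4})$, and that step should be stated with the $x$-dependence visible. Second, when you ``close $\Omega$ by a large semicircle,'' you must take radii $R_m = m+\tfrac12$ (or similar) threading between the zeros of $\sinh(\pi x)$, where $|1/\sinh(\pi x)|$ is uniformly bounded, and verify that $e^{(2z-\pi)x}$ decays on the arc for $\mathrm{Im}(z)>0$; this is standard but your sketch omits it. Third, in the second integral, you should note explicitly that the pole at $x=0$ lies \emph{below} the small detour of $\Omega$, hence is not enclosed when you close in the upper half-plane — otherwise the result would be off by a constant. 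Finally, your assessment of part (3) is honest and accurate: the $r^{-2}$-term in (1) has a pole at $z=0$, so evaluating (1) at $z=\pi/r$ is not legitimate, and the correct route is to redo the asymptotics of the contour integral with $z=\pi/r$ from the start; that is what the cited formulas of Ohtsuki \cite{O2} supply, and your sketch rightly stops where the citation takes over.
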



\subsection{Quantum $6j$-symbols and their underlying geometry} 

As is customary we define
 $$[a]!=\prod_{k=1}^a[k].$$

Recall that a triple $(a_i,a_j,a_k)$ of integers in $\{0,\dots,r-2\}$ is \emph{$r$-admissible} if 
\begin{enumerate}[(1)]
\item  $a_i+a_j-a_k\geqslant 0,$
\item $a_i+a_j+a_k\leqslant 2(r-2),$ and
\item $a_i+a_j+a_k$ is even.
\end{enumerate}

For an $r$-admissible triple $(a_1,a_2,a_3)$, define 
$$\Delta(a_1,a_2,a_3)=\sqrt{\frac{[\frac{a_1+a_2-a_3}{2}]![\frac{a_2+a_3-a_1}{2}]![\frac{a_3+a_1-a_2}{2}]!}{[\frac{a_1+a_2+a_3}{2}+1]!}}$$
with the convention that $\sqrt{x}=\sqrt{|x|}\sqrt{-1}$ when the real number $x$ is negative.

A  6-tuple $(a_1,\dots,a_6)$ is \emph{$r$-admissible} if the triples $(a_1,a_2,a_3)$, $(a_1,a_5,a_6)$, $(a_2,a_4,a_6)$ and $(a_3,a_4,a_5)$ are $r$-admissible

\begin{definition}
The \emph{quantum $6j$-symbol} of an $r$-admissible 6-tuple $(a_1,\dots,a_6)$ is 
\begin{multline*}
\bigg|\begin{matrix} a_1 & a_2 & a_3 \\ a_4 & a_5 & a_6 \end{matrix} \bigg|
= \sqrt{-1}^{-\sum_{i=1}^6a_i}\Delta(a_1,a_2,a_3)\Delta(a_1,a_5,a_6)\Delta(a_2,a_4,a_6)\Delta(a_3,a_4,a_5)\\
\sum_{k=\max \{T_1, T_2, T_3, T_4\}}^{\min\{ Q_1,Q_2,Q_3\}}\frac{(-1)^k[k+1]!}{[k-T_1]![k-T_2]![k-T_3]![k-T_4]![Q_1-k]![Q_2-k]![Q_3-k]!},
\end{multline*}
where $T_1=\frac{a_1+a_2+a_3}{2}$, $T_2=\frac{a_1+a_5+a_6}{2}$, $T_3=\frac{a_2+a_4+a_6}{2}$ and $T_4=\frac{a_3+a_4+a_5}{2}$, $Q_1=\frac{a_1+a_2+a_4+a_5}{2}$, $Q_2=\frac{a_1+a_3+a_4+a_6}{2}$ and $Q_3=\frac{a_2+a_3+a_5+a_6}{2}.$
\end{definition}

Closely related, a triple $(\alpha_1,\alpha_2,\alpha_3)\in [0,2\pi]^3$ is \emph{admissible} if 
\begin{enumerate}[(1)]
\item $\alpha_i+\alpha_j-\alpha_k\geqslant 0$ for $\{i,j,k\}=\{1,2,3\},$ and
\item $\alpha_i+\alpha_j+\alpha_k\leqslant 4\pi.$
\end{enumerate}
A $6$-tuple $(\alpha_1,\dots,\alpha_6)\in [0,2\pi]^6$ is \emph{admissible} if the triples $\{1,2,3\},$ $\{1,5,6\},$ $\{2,4,6\}$ and $\{3,4,5\}$ are admissible.

\begin{definition} An $r$-admissible $6$-tuple $(a_1,\dots,a_6)$  is of the \emph{hyperideal type} if for $\{i,j,k\}=\{1,2,3\},$ $\{1,5,6\},$ $\{2,4,6\}$ and $\{3,4,5\},$
\begin{enumerate}[(1)]
\item $0\leqslant a_i+a_j-a_k<r-2,$
\item $r-2< a_i+a_j+a_k\leqslant 2(r-2),$ and
\item $a_i+a_j+a_k$ is even.
\end{enumerate}
\end{definition}

As a consequence of Lemma \ref{factorial} we have

\begin{proposition}\label{6jqd} The quantum $6j$-symbol at the root of unity $q=e^{\frac{2\pi \sqrt{-1}}{r}}$ can be computed as 
$$\bigg|
\begin{matrix}
        a_1 & a_2 & a_3 \\
        a_4 & a_5 & a_6 
      \end{matrix} \bigg|=\frac{\{1\}}{2}\sum_{k=\max\{T_1,T_2,T_3,T_4\}}^{\min\{Q_1,Q_2,Q_3,r-2\}}e^{\frac{r}{4\pi \sqrt{-1}}U_r\big(\frac{2\pi a_1}{r},\dots,\frac{2\pi a_6}{r},\frac{2\pi k}{r}\big)},$$
 where $U_r$ is defined as follows. If $(a_1,\dots,a_6)$ is of the hyperideal type, then
\begin{equation}\label{termwithr}
\begin{split}
U_r(\alpha_1,\dots,\alpha_6,\xi)=&\pi^2-\Big(\frac{2\pi}{r}\Big)^2+\frac{1}{2}\sum_{i=1}^4\sum_{j=1}^3(\eta_j-\tau_i)^2-\frac{1}{2}\sum_{i=1}^4\Big(\tau_i+\frac{2\pi}{r}-\pi\Big)^2\\
&+\Big(\xi+\frac{2\pi}{r}-\pi\Big)^2-\sum_{i=1}^4(\xi-\tau_i)^2-\sum_{j=1}^3(\eta_j-\xi)^2\\
&-2\varphi_r\Big(\frac{\pi}{r}\Big)-\frac{1}{2}\sum_{i=1}^4\sum_{j=1}^3\varphi_r\Big(\eta_j-\tau_i+\frac{\pi}{r}\Big)+\frac{1}{2}\sum_{i=1}^4\varphi_r\Big(\tau_i-\pi+\frac{3\pi}{r}\Big)\\
&-\varphi_r\Big(\xi-\pi+\frac{3\pi}{r}\Big)+\sum_{i=1}^4\varphi_r\Big(\xi-\tau_i+\frac{\pi}{r}\Big)+\sum_{j=1}^3\varphi_r\Big(\eta_j-\xi+\frac{\pi}{r}\Big),\\
\end{split}
\end{equation}
where $\alpha_i=\frac{2\pi a_i}{r}$ for $i=1,\dots,6$ and $\xi=\frac{2\pi k}{r},$ $\tau_1=\frac{\alpha_1+\alpha_2+\alpha_3}{2}$, $\tau_2=\frac{\alpha_1+\alpha_5+\alpha_6}{2}$, $\tau_3=\frac{\alpha_2+\alpha_4+\alpha_6}{2}$ and $\tau_4=\frac{\alpha_3+\alpha_4+\alpha_5}{2}$, $\eta_1=\frac{\alpha_1+\alpha_2+\alpha_4+\alpha_5}{2}$, $\eta_2=\frac{\alpha_1+\alpha_3+\alpha_4+\alpha_6}{2}$ and $\eta_3=\frac{\alpha_2+\alpha_3+\alpha_5+\alpha_6}{2}.$
If $(a_1,\dots,a_6)$ is not of the hyperideal type, then $U_r$ will be changed according to Lemma \ref{factorial}.
\end{proposition}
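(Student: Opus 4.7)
The plan is to apply Lemma \ref{factorial} to every quantum factorial in the definition of the $6j$-symbol. Writing $[n]! = \{n\}!/\{1\}^n$ to pass to Faddeev's normalization, we first determine which of the two formulas in Lemma \ref{factorial} applies to each factorial under the hyperideal hypothesis: since $T_i > \frac{r-2}{2}$ we have $T_i + 1 \geq \frac{r+1}{2}$, and since $k \geq \max T_i \geq \frac{r-1}{2}$ we also have $k + 1 \geq \frac{r+1}{2}$, so formula (2) is used for $[T_i+1]!$ and $[k+1]!$. The remaining factorials $[\eta_j - \tau_i]!$, $[k - T_i]!$ and $[Q_j - k]!$ all have arguments in $\big[0, \frac{r-3}{2}\big]$ and are handled by formula (1). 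The required $\varphi_r$-arguments in (\ref{termwithr}) are then built directly into the two formulas.

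The first task is to verify the prefactor $\frac{\{1\}}{2}$. The combinatorial identity $T_1 + T_2 + T_3 + T_4 = Q_1 + Q_2 + Q_3 = a_1 + \ldots + a_6$ (each $\alpha_k$ appears in exactly two of the $\tau$'s and in exactly two of the $\eta$'s) implies that the powers of $\{1\}$ from the conversions $[n]! \mapsto \{n\}!/\{1\}^n$ collapse to $\{1\}^{4 \cdot \frac{1}{2} - 1} = \{1\}^1$, while the formula-(2) factors of $2$ combine to $2^{4 \cdot (-\frac{1}{2}) + 1} = 2^{-1}$. Next, reading off the $\varphi_r$-contributions from each factorial with the appropriate sign and exponent ($\pm \frac{1}{2}$ inside each $\Delta$ or $\pm 1$ inside each $k$-summand) recovers the five $\varphi_r(\cdot)$ sums on the third and fourth lines of (\ref{termwithr}), and the signed count $+4 + 1 - 4 - 3 = -2$ on the $\varphi_r(\pi/r)$-contributions produces the term $-2\varphi_r(\pi/r)$.

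The main obstacle is to verify that the remaining polynomial pieces, together with the two global phases $(-1)^k$ and $\sqrt{-1}^{-\sum a_i}$, assemble into the quadratic form on the first two lines of (\ref{termwithr}). Each factorial contributes a common polynomial $\mathcal{P}(x) = \frac{r}{4\pi\sqrt{-1}}\big[x^2 - \big(2\pi - \frac{2\pi}{r}\big)x\big]$ with $x = \frac{2\pi n}{r}$; completing the square and translating the center by an additional $\frac{\pi}{r}$ in the formula-(2) cases produces the squares $(\tau_i + \frac{2\pi}{r} - \pi)^2$ and $(\xi + \frac{2\pi}{r} - \pi)^2$. After the shifts the quadratic terms fit, but there remains a tail of linear residuals in $\xi$, $\tau_i$ and $\eta_j$ and a residual constant; we will need to show that (i) the $\xi$-linear residual cancels against the $(-1)^k = e^{\sqrt{-1}\pi k}$ contribution, which is equivalent to $-2\pi\xi$ inside $e^{\frac{r}{4\pi\sqrt{-1}}U_r}$; (ii) the $\tau_i$- and $\eta_j$-linear residuals cancel against $\sqrt{-1}^{-\sum a_i}$, which is equivalent to $\pi \sum_i \tau_i$ inside the same exponent; and (iii) the constants collapse to $\pi^2 - \big(\frac{2\pi}{r}\big)^2$. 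The identity $\sum_i \tau_i = \sum_j \eta_j = \sum_k \alpha_k$ is the algebraic linchpin of (i) and (ii). The non-hyperideal case is handled identically, except that one interchanges formulas (1) and (2) for whichever factorials now fall in the opposite range, shifting the corresponding $\varphi_r$-arguments by $\pm \pi$ and adjusting the $\log 2$ count, as contemplated in the statement.
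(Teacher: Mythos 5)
Your proposal is correct and follows exactly the approach the paper intends: the paper offers no detailed proof beyond the phrase ``As a consequence of Lemma \ref{factorial} we have,'' and you have supplied the bookkeeping that phrase suppresses. All the checkpoints work out: the $\{1\}$-exponent from the four $\Delta$'s is $4\cdot\tfrac12$ and from the $k$-summand is $-1$ (using $\sum_i T_i=\sum_j Q_j=\sum_l a_l$), giving the prefactor $\{1\}$; the two formula-(2) factorials $[T_i+1]!$ and $[k+1]!$ contribute the factor $2^{4\cdot(-1/2)+1}=2^{-1}$; the $\varphi_r(\pi/r)$-count is $-2$ and the five $\varphi_r(\cdot)$-sums read off with the correct signs and $\frac{\pi}{r}$ or $\frac{3\pi}{r}$ shifts; writing $(-1)^k=e^{\frac{r}{4\pi\sqrt{-1}}(-2\pi\xi)}$ and $\sqrt{-1}^{-\sum a_i}=e^{\frac{r}{4\pi\sqrt{-1}}\,\pi\sum_i\tau_i}$ does cancel the linear residuals coming from the shared polynomial $x^2-(2\pi-\tfrac{2\pi}{r})x$, and the constants collapse to $\pi^2-(\tfrac{2\pi}{r})^2$. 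One phrase is slightly misleading: you describe ``completing the square and translating the center by an additional $\frac{\pi}{r}$ in the formula-(2) cases,'' but the polynomial part of Lemma \ref{factorial} is identical in formulas (1) and (2); what actually produces $(\tau_i+\tfrac{2\pi}{r}-\pi)^2$ and $(\xi+\tfrac{2\pi}{r}-\pi)^2$ is that for those factorials the argument is $n+1$ rather than a difference, so $x=\tau_i+\tfrac{2\pi}{r}$ (resp.\ $x=\xi+\tfrac{2\pi}{r}$), and one then splits $x^2-(2\pi-\tfrac{2\pi}{r})x$ as $(x-\pi)^2+(\tfrac{2\pi}{r}x-\pi^2)$, absorbing the remainder into the global linear and constant residuals. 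This is a matter of phrasing, not a gap. You might also note explicitly that the upper limit of the $k$-sum is $\min\{Q_1,Q_2,Q_3,r-2\}$ because $[k+1]!=0$ once $k+1\geqslant r$, though this is implicit in your choice of range.
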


\begin{definition} A $6$-tuple $(\alpha_1,\dots,\alpha_6)\in [0,2\pi]^6$ is of the \emph{hyperideal type} if
\begin{enumerate}[(1)]
\item $0\leqslant \alpha_i+\alpha_j-\alpha_k\leqslant 2\pi,$ and
\item $2\pi\leqslant \alpha_i+\alpha_j+\alpha_k\leqslant 4\pi.$
\end{enumerate}
\end{definition}

We notice that the six numbers $|\pi-\alpha_1|,\dots,|\pi-\alpha_6|$ are the dihedral angles of an ideal or a hyperideal tetrahedron if and only if $(\alpha_1,\dots,\alpha_6)$ is of the hyperideal type. 

By Lemma \ref{converge}, $U_r=U-\frac{4\pi\sqrt{-1}}{r}\log\big(\frac{r}{2}\big)+O(\frac{1}{r}),$ where  $U$ is defined by
\begin{equation}\label{term}
\begin{split}
U(\alpha_1,\dots,\alpha_6,\xi)=&\pi^2+\frac{1}{2}\sum_{i=1}^4\sum_{j=1}^3(\eta_j-\tau_i)^2-\frac{1}{2}\sum_{i=1}^4(\tau_i-\pi)^2\\
&+(\xi-\pi)^2-\sum_{i=1}^4(\xi-\tau_i)^2-\sum_{j=1}^3(\eta_j-\xi)^2\\
&-2\mathit{Li}_2(1)-\frac{1}{2}\sum_{i=1}^4\sum_{j=1}^3\mathit{Li}_2\big(e^{2i(\eta_j-\tau_i)}\big)+\frac{1}{2}\sum_{i=1}^4\mathit{Li}_2\big(e^{2i(\tau_i-\pi)}\big)\\
&-\mathit{Li}_2\big(e^{2i(\xi-\pi)}\big)+\sum_{i=1}^4\mathit{Li}_2\big(e^{2i(\xi-\tau_i)}\big)+\sum_{j=1}^3\mathit{Li}_2\big(e^{2i(\eta_j-\xi)}\big)\\
\end{split}
\end{equation}
on the region $\mathrm{B_{H,\mathbb C}}$ consisting of $(\alpha_1,\dots,\alpha_6,\xi)\in\mathbb C^7$ such that $(\mathrm{Re}(\alpha_1),\dots,\mathrm{Re}(\alpha_6))$ is of the hyperideal type and $\max\{\mathrm{Re}(\tau_i)\}\leqslant \mathrm{Re}(\xi)\leqslant \min\{\mathrm{Re}(\eta_j), 2\pi\}.$ 

Let 
$$\mathrm{B_H}=\mathrm{B_{H,\mathbb C}}\cap \mathbb R^7.$$

For $\boldsymbol\alpha=(\alpha_1,\dots,\alpha_6)\in\mathbb C^6$ such that $(\mathrm{Re}(\alpha_1),\dots,\mathrm{Re}(\alpha_6))$ is of the hyperideal type, we let $\xi(\boldsymbol\alpha)$ be such that
\begin{equation}\label{xia}
\frac{\partial U(\boldsymbol\alpha,\xi)}{\partial \xi}\Big|_{\xi=\xi(\boldsymbol\alpha)}=0.
\end{equation}
Following the idea of \cite{MY}, see also \cite{BY}, it is proved that $e^{-2\sqrt{-1}\xi(\boldsymbol\alpha)}$ satisfies a concrete quadratic equation. Therefore, for each such $\boldsymbol\alpha,$ there is at most one $\xi(\boldsymbol\alpha)$ such that $(\boldsymbol\alpha,\xi(\boldsymbol\alpha))\in \mathrm{B_{H,\mathbb C}}.$  Indeed, only one of the two solutions of the quadratic equation makes (\ref{xia}) hold, and the other only makes it hold modulo a multiple of $\pi\sqrt{-1}.$ In \cite[Section 3]{BY}, it shows that  if all $\mathrm{Re}(\alpha_1),\dots,\mathrm{Re}(\alpha_6)$ are sufficiently close to $\pi,$ then $(\boldsymbol\alpha,\xi(\boldsymbol\alpha))\in \mathrm{B_{H,\mathbb C}}.$

For $\boldsymbol\alpha\in\mathbb C^6$ so that $(\boldsymbol\alpha,\xi(\boldsymbol\alpha))\in\mathrm{B_{H,\mathbb C}},$ we define
\begin{equation}\label{W}
W(\boldsymbol\alpha)=U(\boldsymbol\alpha,\xi(\boldsymbol\alpha)).
\end{equation}
 Then as a special case of \cite[Theorem 3.5]{BY},  we have
 
 \begin{theorem}\label{co-vol} 
 $$W\big(\pi\pm \sqrt{-1} l_1,\dots, \pi\pm \sqrt{-1} l_6\big)=2\pi^2+2\sqrt{-1}\cdot\mathrm{Cov}\big(l_1,\dots, l_6\big)$$
for all $\{l_1,\dots,l_6\}$ that form the set of edge lengths of a truncated hyperideal tetrahedron, where $\mathrm{Cov}$ is the co-volume function defined in Definition \ref{cov}.
\end{theorem}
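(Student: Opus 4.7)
The plan is to match gradients in the edge-length variables via Schl\"afli's formula (Lemma \ref{Sch}) and then fix the remaining additive constant, following the strategy of \cite[Theorem 3.5]{BY}. Since $\xi(\alpha)$ is by definition a critical point of $U$ in the $\xi$-variable (equation \eqref{xia}), the envelope theorem gives
$$\frac{\partial W}{\partial \alpha_i}(\alpha)=\frac{\partial U}{\partial \alpha_i}(\alpha,\xi(\alpha)).$$
Differentiating the explicit formula \eqref{term} and using $\frac{d}{dz}\mathrm{Li}_2(e^{2\sqrt{-1}z})=-2\sqrt{-1}\log(1-e^{2\sqrt{-1}z})$ produces a closed-form expression for $\partial_{\alpha_i}W$ as a linear combination of logarithms of factors $1-e^{2\sqrt{-1}(\cdot)}$, one for each dilogarithm in \eqref{term} depending on $\alpha_i$.

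I then specialize to $\alpha_k=\pi+\sqrt{-1}\,l_k$. As noted in the text preceding the theorem (following \cite{MY,BY}), the equation $\partial_\xi U=0$ reduces to a quadratic equation in $e^{-2\sqrt{-1}\xi}$; under our specialization this quadratic has the form whose relevant root is $\xi(\alpha)=\pi+\sqrt{-1}\zeta$, with $\zeta$ characterized by a relation equivalent to the Cosine Law \eqref{cos2} for the truncated hyperideal tetrahedron with edge lengths $(l_1,\dots,l_6)$. Plugging this $\xi$ back into the formula for $\partial_{\alpha_i}W$ and repeatedly applying \eqref{cos1}--\eqref{cos2}, I expect the logarithms to collapse to
$$\frac{\partial W}{\partial \alpha_i}\bigg|_{\alpha_k=\pi+\sqrt{-1}l_k}=\theta_i,$$
the dihedral angle at the edge of length $l_i$. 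Combined with the chain rule $\partial_{l_i}=\sqrt{-1}\,\partial_{\alpha_i}$ and Lemma \ref{Sch}, this yields $\partial_{l_i}W=\sqrt{-1}\,\theta_i=2\sqrt{-1}\cdot \partial_{l_i}\mathrm{Cov}$, so the difference $W(\pi+\sqrt{-1}l_1,\dots,\pi+\sqrt{-1}l_6)-2\sqrt{-1}\cdot\mathrm{Cov}(l_1,\dots,l_6)$ is constant on the connected space of edge-length tuples of hyperideal tetrahedra.

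To identify this constant as $2\pi^2$, I would let all $l_i\to 0^+$: both $\mathrm{Vol}$ and $\sum_i\theta_i l_i$ go to zero, so $\mathrm{Cov}\to 0$, while on the $W$ side $\alpha_k\to\pi$, and $W$ can be computed directly as a finite sum of dilogarithm values at roots of unity, which collapses to $2\pi^2$ via \eqref{dilogLob} and the standard Lobachevsky identities $\Lambda(0)=\Lambda(\pi/2)=0$. The case $\alpha_k=\pi-\sqrt{-1}l_k$ follows by the conjugation symmetry $W(\bar\alpha)=\overline{W(\alpha)}$ together with the sign change in the $\sqrt{-1}$ factor. The main obstacle is the gradient identification in the second paragraph: verifying that the explicit linear combination of thirteen logarithms obtained from $\partial_{\alpha_i}U$, when evaluated at the critical $\xi(\alpha)=\pi+\sqrt{-1}\zeta$, collapses through the hyperideal Cosine Laws \eqref{cos1}--\eqref{cos2} exactly to the single dihedral angle $\theta_i$. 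This delicate trigonometric consolidation is the geometric heart of the identity, and is the content of \cite[Theorem 3.5]{BY} (and the Murakami-Yano style calculation in \cite{MY}) specialized here from deeply truncated to ordinary truncated hyperideal tetrahedra.
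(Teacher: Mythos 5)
The paper itself does not prove this theorem: it simply records it as ``a special case of \cite[Theorem 3.5]{BY},'' so there is no internal proof to compare against. Your gradient-matching plus normalization outline is the right strategy (and is essentially what \cite{BY} carries out), and your envelope-theorem reduction, the chain rule $\partial_{l_i}=\sqrt{-1}\,\partial_{\alpha_i}$, and the invocation of Lemma~\ref{Sch} all fit together correctly. You are also candid that the trigonometric collapse of $\partial_{\alpha_i}U$ to the dihedral angle $\theta_i$ is deferred to \cite{BY,MY}, which is a fair stance given that the paper itself defers the whole theorem.

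However, your normalization step contains a concrete error. As $l_1,\dots,l_6\to 0^+$ the dihedral angles go to $0$ (by \eqref{cos1}) and the truncated hyperideal tetrahedron does \emph{not} shrink to a point: it converges to the regular ideal octahedron, so $\mathrm{Vol}\to v_8=8\Lambda(\pi/4)\neq 0$ and hence $\mathrm{Cov}\to v_8$, not $0$. Correspondingly, on the $W$ side the critical point at $\alpha=(\pi,\dots,\pi)$ is $\xi=\frac{7\pi}{4}$ (this value is used repeatedly in Section~\ref{Asy}), so the arguments that appear in \eqref{dilogLob} are $\pi/2$, $\pi/4$, and $3\pi/4$ — not just $0$ and $\pi/2$ — and the Lobachevsky terms at $\pi/4$ do not vanish. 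The correct limit is $W(\pi,\dots,\pi)=2\pi^2+2\sqrt{-1}\,v_8$; you claim it is $2\pi^2$. Both limits are off by the same $2\sqrt{-1}\,v_8$, so the constant $W-2\sqrt{-1}\,\mathrm{Cov}\to 2\pi^2$ is still correct, but your stated reasoning is not. Separately, the conjugation symmetry $W(\bar\alpha)=\overline{W(\alpha)}$ you invoke for the $\pi-\sqrt{-1}\,l_k$ case is false: taking $\alpha=(\pi,\dots,\pi)$, which is self-conjugate, it would force $W(\pi,\dots,\pi)\in\mathbb R$, contradicting the $2\sqrt{-1}\,v_8$ just computed. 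You do not need it — in the other sign branch the chain rule gives $\partial_{l_i}=-\sqrt{-1}\,\partial_{\alpha_i}$ and the gradient collapse gives $\partial_{\alpha_i}W=-\theta_i$ (this sign is consistent with the $\epsilon_i\mu_i$ appearing in the proof of Proposition~\ref{crit}), so $\partial_{l_i}W=\sqrt{-1}\,\theta_i$ again, and the same normalization at $l=0$ finishes both cases.
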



\section{Computation of the relative Turaev-Viro invariants}

\begin{proposition}\label{computation} Let $\mathbf b$ be a coloring of $(M,\mathcal T).$ Then the relative Turaev-Viro invariant $\mathrm{TV}_r(M,E,\mathbf b)$ of $(M,\mathcal T)$ at the root of unity $q=e^{\frac{2\pi \sqrt{-1}}{r}}$ can be computed as 
$$\mathrm {TV}_r(M,E,\mathbf b)=\frac{(-1)^{|E|\big(\frac{r}{2}+1\big)}2^{\mathrm{rank H}(M;\mathbb Z_2)-|T|}}{\{1\}^{|E|-|T|}}\sum_{\mathbf a,\mathbf k}\Big(\sum_{\epsilon}g_r^{\epsilon}(\mathbf a,\mathbf k)\Big),$$
where $\epsilon=(\epsilon_1,\dots,\epsilon_{|E|})\in\{1,-1\}^E$ runs over all multi-signs, $\mathbf a=(a_1,\dots,a_{|E|})$ runs over all multi-even integers in $\{0,2,\dots,r-3\}$ so that for each $s\in\{1,\dots,|T|\}$ the $6$-tuple $(a_{s_1},a_{s_2},a_{s_3}, a_{s_4},a_{s_5},a_{s_6})$  is  $r$-admissible, and $\mathbf k=(k_1,\dots,k_{|T|})$ runs over all multi-integers with each $k_s$ lying in between $\max\{T_{s_i}\}$ and $\min\{Q_{s_j},r-2\},$ with
$$g_r^{\epsilon}(\mathbf a,\mathbf k)=e^{\sum_{i=1}^{|E|}\epsilon_i\frac{2\pi\sqrt{-1}(a_i+b_i+1)}{r}+\frac{r}{4\pi\sqrt{-1}}\mathcal W_r^{\epsilon}(\frac{2\pi \mathbf a}{r},\frac{2\pi\mathbf k}{r})}$$
where $\frac{2\pi \mathbf a}{r}=\Big(\frac{2\pi a_1}{r},\dots,\frac{2\pi a_{|E|}}{r}\Big),$ $\frac{2\pi \mathbf k}{r}=\Big(\frac{2\pi k_1}{r},\dots,\frac{2\pi k_{|T|}}{r}\Big),$ and
\begin{equation*}
\begin{split}
\mathcal W_r^{\epsilon}(\boldsymbol\alpha,\boldsymbol\xi)=-\sum_{i=1}^{|E|}2\epsilon_i(\alpha_i-\pi)(\beta_i-\pi)+\sum_{s=1}^{|T|} U_r(\alpha_{s_1},\dots,\alpha_{s_6},\xi_s)
\end{split}
\end{equation*}
with $\alpha_i=\frac{2\pi a_i}{r}$ and $\beta_i=\frac{2\pi b_i}{r}$ for $i=1,\dots,|E|,$ $\boldsymbol\alpha=(\alpha_1,\dots,\alpha_{|E|})$ and $\boldsymbol\xi=(\xi_1,\dots,\xi_{|T|}).$
\end{proposition}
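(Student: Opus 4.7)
\medskip

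\noindent\textbf{Proof proposal for Proposition \ref{computation}.} The plan is to start from the defining sum of $\mathrm{TV}_r(M,E,\mathbf b)$ and substitute two fundamental expansions: a $\pm$-expansion of the $H$-factor and the Faddeev-dilogarithm expansion of the quantum $6j$-symbol from Proposition \ref{6jqd}. Writing $\{n\}=q^n-q^{-n}$ and using $q^n-q^{-n}=\sum_{\epsilon\in\{\pm 1\}}\epsilon\, q^{\epsilon n}$, I first rewrite
\begin{equation*}
H(a_i,b_i)=\frac{(-1)^{a_i+b_i}}{\{1\}}\sum_{\epsilon_i\in\{\pm 1\}}\epsilon_i\, q^{\epsilon_i(a_i+1)(b_i+1)},
\end{equation*}
and apply Proposition \ref{6jqd} to each tetrahedron $\Delta_s$. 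Collecting the constant factors $1/\{1\}$ from each edge and $\{1\}/2$ from each tetrahedron produces an overall $\{1\}^{|T|-|E|}/2^{|T|}$, together with the global sign $(-1)^{\sum_i(a_i+b_i)}$ and the per-edge sign $\epsilon_i$, times a sum over $\epsilon\in\{\pm 1\}^{|E|}$, over admissible $\mathbf a$, and over the multi-integer $\mathbf k$ ranging as in the statement.

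Next I convert the monomial $q^{\epsilon_i(a_i+1)(b_i+1)}$ into the Gaussian form built from $(\alpha_i-\pi)(\beta_i-\pi)$ which is the only $\beta$-dependent piece in $\mathcal W_r^\epsilon$. With $\alpha_i=\frac{2\pi a_i}{r}$ and $\beta_i=\frac{2\pi b_i}{r}$, a direct computation gives
\begin{equation*}
\frac{r}{4\pi\sqrt{-1}}\bigl(-2\epsilon_i\bigr)(\alpha_i-\pi)(\beta_i-\pi)=\frac{2\pi\sqrt{-1}\,\epsilon_i a_ib_i}{r}-\sqrt{-1}\pi\epsilon_i(a_i+b_i)+\frac{\sqrt{-1}\pi r\epsilon_i}{2},
\end{equation*}
so that, after separating the $\pm$-exponentials,
\begin{equation*}
q^{\epsilon_i(a_i+1)(b_i+1)}=(-1)^{a_i+b_i}\,\sqrt{-1}^{\,-r\epsilon_i}\,e^{\frac{r}{4\pi\sqrt{-1}}(-2\epsilon_i)(\alpha_i-\pi)(\beta_i-\pi)+\frac{2\pi\sqrt{-1}\,\epsilon_i(a_i+b_i+1)}{r}}.
\end{equation*}
Multiplying by the $\epsilon_i$ coming from the $H$-expansion, the key cancellation is that for odd $r$ we have $\epsilon_i\cdot\sqrt{-1}^{\,-r\epsilon_i}=\sqrt{-1}^{\,-r}$ for both choices $\epsilon_i=\pm 1$. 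Taking the product over $i$ collapses all $\epsilon$-dependent signs into the $\epsilon$-independent constant $\sqrt{-1}^{\,-r|E|}=(-1)^{|E|(r/2+1)}$, and the $(-1)^{a_i+b_i}$ produced by the substitution kills the $(-1)^{a_i+b_i}$ already pulled out of $H$. Reorganising the surviving exponent as $\sum_i\frac{2\pi\sqrt{-1}\,\epsilon_i(a_i+b_i+1)}{r}+\frac{r}{4\pi\sqrt{-1}}\mathcal W_r^\epsilon$ under the identification $\xi_s=\frac{2\pi k_s}{r}$ yields exactly $g_r^\epsilon(\mathbf a,\mathbf k)$.

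At this stage the sum over $\mathbf a$ still ranges over all $r$-admissible colorings in the sense of the definition. The last step is to restrict to even-valued colorings in $\{0,2,\dots,r-3\}$, which produces the factor $2^{\mathrm{rank}\,\mathrm H_2(M;\mathbb Z_2)}$. This is the standard $\mathbb Z_2$-cohomology argument used already in the proof of Theorem \ref{=} (see the derivation of (\ref{CM=TV}) and \cite[Lemma~A.4, Theorem~2.9]{DKY}): for odd $r$ and $q=e^{\frac{2\pi\sqrt{-1}}{r}}$, the sum of the $\mathrm{TV}_r$-integrand over all $r$-admissible colorings equals $2^{\mathrm{rank}\,\mathrm H_2(M;\mathbb Z_2)}$ times the corresponding sum over $r$-admissible colorings with all $a_i$ even. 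Combining this with the prefactor $\{1\}^{|T|-|E|}/2^{|T|}$ obtained above gives $2^{\mathrm{rank}\,\mathrm H_2(M;\mathbb Z_2)-|T|}/\{1\}^{|E|-|T|}$, completing the identification with the right-hand side of the proposition.

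\medskip

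\noindent\textbf{Main obstacle.} The mechanical part is the sign and constant bookkeeping: the translation from $(a_i+1)(b_i+1)$ to $(\alpha_i-\pi)(\beta_i-\pi)$ introduces an $\epsilon_i$-dependent factor $\sqrt{-1}^{\,-r\epsilon_i}$ whose combination with the sign $\epsilon_i$ from the $H$-expansion is only $\epsilon$-independent because $r$ is odd, and this is where the slightly awkward exponent $(r/2+1)$ in $(-1)^{|E|(r/2+1)}$ comes from. The more conceptual point is the passage from the sum over all admissible colorings to the sum over even-valued ones, which is a $\mathbb Z_2$-cohomological statement about $(M,\mathcal T)$ rather than a local identity and is precisely what produces the topological factor $2^{\mathrm{rank}\,\mathrm H_2(M;\mathbb Z_2)}$; beyond invoking \cite[Lemma A.4]{DKY}, no new argument is needed here.
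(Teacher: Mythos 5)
Your proposal is correct and follows essentially the same route as the paper: you expand $\mathrm H(a_i,b_i)$ as a $\pm$-sum, use the algebraic identity converting $(a_i+1)(b_i+1)$ into $(a_i-\tfrac r2)(b_i-\tfrac r2)+a_i+b_i+1$ (which underlies the paper's two displayed identities and, when exponentiated, yields the Gaussian in $(\alpha_i-\pi)(\beta_i-\pi)$), combine with Proposition \ref{6jqd}, and invoke \cite[Lemma~A.4, Theorem~2.9]{DKY} for the $2^{\mathrm{rank}\,\mathrm H_2(M;\mathbb Z_2)}$ factor. The only cosmetic difference is the order of operations: the paper absorbs $(-1)^{a_i+b_i}$ into the exponent before summing over $\epsilon_i$, so the constant $(-1)^{r/2+1}$ appears directly, whereas you expand over $\epsilon_i$ first and then observe that $\epsilon_i\sqrt{-1}^{\,-r\epsilon_i}$ collapses to the same $\epsilon_i$-independent constant for odd $r$ — the same bookkeeping presented in a different order.
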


\begin{proof} First, we observe that if we let the summation in the definition of $\mathrm{TV}_r(M,E,\mathbf b)$ be over all multi-even integers $\mathbf a$ instead of multi-integers, then the resulting quantity differs from $\mathrm{TV}_r(M,E,\mathbf b)$ by a factor $2^{\mathrm{rank H}(M;\mathbb Z_2)}$ by \cite[Lemma A.4, Theorem 2.9 and its proof]{DKY}.  Next, we observe that
$$(-1)^{a+b}q^{(a+1)(b+1)}=-(-1)^{\frac{r}{2}}q^{\big(a-\frac{r}{2}\big)\big(b-\frac{r}{2}\big)+a+b+1},$$
and
$$(-1)^{a+b}q^{-(a+1)(b+1)}=(-1)^{\frac{r}{2}}q^{-\big(a-\frac{r}{2}\big)\big(b-\frac{r}{2}\big)-a-b-1}.$$
As a consequence, we have
\begin{equation*}
\begin{split}
\mathrm H(a_i,b_i)=&\frac{1}{q-q^{-1}}\bigg((-1)^{a_i+b_i}q^{(a_i+1)(b_i+1)}-(-1)^{a_i+b_i}q^{-(a_i+1)(b_i+1)}\bigg)\\
=&\frac{-(-1)^{\frac{r}{2}}}{q-q^{-1}}\bigg(q^{\big((a_i-\frac{r}{2})(b_i-\frac{r}{2})+a_i+b_i+1\big)}+q^{-\big((a_i-\frac{r}{2})(b_i-\frac{r}{2})+a_i+b_i+1\big)}\bigg)\\
=&\frac{(-1)^{\frac{r}{2}+1}}{\{1\}}\sum_{\epsilon_i\in\{-1,1\}}q^{\epsilon_i\big((a_i-\frac{r}{2})(b_i-\frac{r}{2})+a_i+b_i+1\big)}\\
=&\frac{(-1)^{\frac{r}{2}+1}}{\{1\}}\sum_{\epsilon_i\in\{-1,1\}}e^{\epsilon_i\frac{2\sqrt{-1}\pi(a_i+b_i+1)}{r}+\frac{r}{4\pi\sqrt{-1}}\Big(-2\epsilon_i\big(\frac{2\pi a_i}{r}-\pi\big)\big(\frac{2\pi b_i}{r}-\pi\big)\Big)},\\
\end{split}
\end{equation*}
and hence
\begin{equation*}
\begin{split}\prod_{i\in I}\mathrm H(a_i,b_i)=&\frac{(-1)^{|I|\big(\frac{r}{2}+1\big)}}{\{1\}^{|I|}}\sum_{\epsilon_I\in\{-1,1\}^{|I|}}e^{\sum_{i=1}^{|E|}\epsilon_i\frac{2\sqrt{-1}\pi(a_i+b_i+1)}{r}+\frac{r}{4\pi\sqrt{-1}}\sum_{i=1}^{|E|}\Big(-2\epsilon_i\big(\frac{2\pi a_i}{r}-\pi\big)\big(\frac{2\pi b_i}{r}-\pi\big)\Big)}\\
=&\frac{(-1)^{|I|\big(\frac{r}{2}+1\big)}}{\{1\}^{|I|}}\sum_{\epsilon_I\in\{-1,1\}^{|I|}}e^{\sum_{i=1}^{|E|}\epsilon_i\sqrt{-1}(\alpha_i+\beta_i+\frac{2\pi}{r})+\frac{r}{4\pi\sqrt{-1}}\sum_{i=1}^{|E|}\big(-2\epsilon_i(\alpha_i-\pi)(\beta_i-\pi)\big)}.\\
\end{split}
\end{equation*}

Then the result follows from Proposition \ref{6jqd}.
\end{proof}

We notice that the summation in Proposition \ref{computation} is finite, and to use the Poisson Summation Formula, we need an infinite sum over integral points. To this end, we consider the following regions and a bump function over them. 

Let $\alpha_i=\frac{2\pi a_i}{r}$ and $\beta_i=\frac{2\pi b_i}{r}$ for $i=1,\dots,|E|,$ $\xi_s=\frac{2\pi k_s}{r}$ for $s=1,\dots, |T|,$ $\tau_{s_i}=\frac{2\pi T_{s_i}}{r}$ for $i=1,\dots,4,$ and $\eta_{s_j}=\frac{2\pi Q_{s_j}}{r}$ for $j=1,2,3.$ Let
$$\mathrm {D_A}=\Big\{(\boldsymbol\alpha,\boldsymbol\xi)\in\mathbb R^{|E|+|T|}\ \Big|\ (\alpha_{s_1},\dots,\alpha_{s_6}) \text{ is admissible, } \max\{\tau_{s_i}\}\leqslant \xi_s\leqslant \min\{\eta_{s_j}, 2\pi\}, s=1,\dots, |T|\Big\},$$
and let
$$\mathrm {D_H}=\Big\{(\boldsymbol\alpha,\boldsymbol\xi)\in\mathrm {D_A} \ \Big|\ (\alpha_{s_1},\dots,\alpha_{s_6}) \text{ is of the hyperideal type}, s=1,\dots, |T| \Big\}.$$
For a sufficiently small $\delta >0,$ let 
$$\mathrm {D_H^\delta}=\Big\{(\boldsymbol\alpha,\boldsymbol\xi)\in\mathrm {D_H}\ \Big|\ d((\boldsymbol\alpha,\boldsymbol\xi), \partial\mathrm {D_H})>\delta \Big\},$$
where $d$ is the Euclidean distance on $\mathbb R^n.$
We let $\psi:\mathbb R^{|E|+|T|}\to\mathbb R$ be the $C^{\infty}$-smooth bump function supported on $(\mathrm{D_H}, \mathrm{D_H^\delta}),$ ie,  \begin{equation*}
\left \{\begin{array}{rl}
\psi(\boldsymbol\alpha,\boldsymbol\xi)=1, & (\boldsymbol\alpha,\boldsymbol\xi)\in \overline{\mathrm{D_H^\delta}}\\
0<\psi(\boldsymbol\alpha,\boldsymbol\xi)<1, &  (\boldsymbol\alpha,\boldsymbol\xi)\in \mathrm{D_H}\setminus \overline{\mathrm{D_H^\delta}}\\
\psi(\boldsymbol\alpha,\boldsymbol\xi)=0, & (\boldsymbol\alpha,\boldsymbol\xi)\notin \mathrm{D_H},\\
\end{array}\right.
\end{equation*}
and let 
$$f^{\epsilon}_r(\mathbf a,\mathbf k)=\psi\Big(\frac{2\pi \mathbf a}{r},\frac{2\pi \mathbf k}{r}\Big)g^{\epsilon}_r(\mathbf a,\mathbf k).$$

In Proposition \ref{computation}, the coloring $\mathbf a$ runs over multi-even integers. On the other hand, to use the Poisson Summation Formula, we need a sum over all integers. For this purpose, we for each $i$ let $a_i=2a_i'$ and let $\mathbf a'=(a_1',\dots,a_{|E|}').$ 
Then by Proposition \ref{computation}, 
$$\mathrm {TV}_r(M,E,\mathbf b)=\frac{(-1)^{|E|\big(\frac{r}{2}+1\big)}2^{\mathrm{rank H}(M;\mathbb Z_2)-|T|}}{\{1\}^{|E|-|T|}}\sum_{(\mathbf a',\mathbf k)\in\mathbb Z^{|E|+|T|}}\Big(\sum_{\epsilon\in\{1,-1\}^E} f_r^{\epsilon}\big(2\mathbf a',\mathbf k\big)\Big)+\text{error term}.$$
Let $$f_r=\sum_{\epsilon\in\{1,-1\}^E} f_r^{\epsilon}.$$ Then
$$\mathrm {TV}_r(M,E,\mathbf b)=\frac{(-1)^{|E|\big(\frac{r}{2}+1\big)}2^{\mathrm{rank H}(M;\mathbb Z_2)-|T|}}{\{1\}^{|E|-|T|}}\sum_{(\mathbf a',\mathbf k)\in\mathbb Z^{|E|+|T|}} f_r\big(2\mathbf a',\mathbf k\big)+\text{error term}.$$

Since $f_r$ is $C^{\infty}$-smooth and equals zero out of $\mathrm{D_H},$ it is in the Schwartz space on $\mathbb R^{|E|+|T|}.$ Then by the Poisson Summation Formula (see e.g. \cite[Theorem 3.1]{SS}),
$$\sum_{(\mathbf a',\mathbf k)\in\mathbb Z^{|E|+|T|}}f_r\big(2\mathbf a',\mathbf k\big)=\sum_{(\mathbf m,\mathbf n)\in\mathbb Z^{|E|+|T|}}\widehat {f_r}(\mathbf m,\mathbf n),$$
where $\mathbf m=(m_1,\dots,m_{|E|})\in \mathbb Z^E,$ $\mathbf n=(n_1,\dots, n_{|T|})\in \mathbb Z^T,$ and  $\widehat f_r(\mathbf m,\mathbf n)$ is the $(\mathbf m,\mathbf n)$-th Fourier coefficient of $f_r$ defined by
\begin{equation*}
\begin{split}
\widehat {f_r}(\mathbf m,\mathbf n)=\int_{\mathbb R^{|E|+|T|}}&f_r\big(2\mathbf a',\mathbf k\big)e^{\sum_{i=1}^{|E|}2\pi \sqrt{-1}m_ia_i'+\sum_{s=1}^{|T|}2\pi \sqrt{-1}n_sk_s}d\mathbf a'd\mathbf k,
\end{split}
\end{equation*}
where $d\mathbf a'=\prod_{i=1}^{|E|}da'_i$ and $d\mathbf k=\prod_{s=1}^{|T|}dk_s.$

By the change of variable, and by changing $2a_i'$ back to $a_i,$ the Fourier coefficients can be computed as
\begin{proposition}\label{4.2}
$$\widehat{f_r}(\mathbf m,\mathbf n)=\sum_{\epsilon\in\{1,-1\}^E} \widehat{f^{\epsilon}_r}(\mathbf m,\mathbf n)$$
with
\begin{equation*}
\begin{split}
\widehat{f^{\epsilon}_r}(\mathbf m,\mathbf n)=\frac{ r^{|E|+|T|}}{2^{2|E|+|T|}\cdot\pi^{|E|+|T|}}&\int_{\mathrm{D_H}}\psi(\boldsymbol\alpha,\boldsymbol\xi)e^{\sum_{i=1}^{|E|}\epsilon_i\sqrt{-1}(\alpha_i+\beta_i+\frac{2\pi}{r})}\\ 
&\cdot e^{\frac{r}{4\pi \sqrt{-1}}\big(\mathcal W_r^{\epsilon}(\boldsymbol\alpha,\boldsymbol\xi)-\sum_{i=1}^{|E|}2\pi m_i\alpha_i-\sum_{s=1}^{|E|}4\pi n_s\xi_s\big)}d\boldsymbol\alpha d\boldsymbol\xi,
\end{split}
\end{equation*}
where $d\boldsymbol\alpha=\prod_{i=1}^{|E|}d\alpha_i,$ $d\boldsymbol\xi=\prod_{s=1}^{|T|}d\xi_s$ and
$$\mathcal W_r^{\epsilon}(\boldsymbol\alpha,\boldsymbol\xi)=-\sum_{i=1}^{|E|}2\epsilon_i(\alpha_i-\pi)(\beta_i-\pi)+\sum_{s=1}^{|T|}U_r(,\alpha_{s_1},\dots,\alpha_{s_6},\xi_s).$$
In particular,
\begin{equation*}
\begin{split}
\widehat{f^{\epsilon}_r}(\mathbf 0,\mathbf 0)=\frac{r^{|E|+|T|}}{2^{2|E|+|T|}\cdot\pi^{|E|+|T|}}\int_{\mathrm{D_H}}\psi(\boldsymbol\alpha,\boldsymbol\xi)e^{\sum_{i=1}^{|E|}\epsilon_i\sqrt{-1}(\alpha_i+\beta_i+\frac{2\pi}{r})+\frac{r}{4\pi \sqrt{-1}}\mathcal W_r^{\epsilon}(\boldsymbol\alpha,\boldsymbol\xi)}d\boldsymbol\alpha d\boldsymbol\xi.
\end{split}
\end{equation*}
\end{proposition}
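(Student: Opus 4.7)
\smallskip

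The plan is to compute $\widehat{f_r}(\mathbf m,\mathbf n)$ directly from its defining integral by performing the linear change of variables that converts the discrete parameters $(\mathbf a',\mathbf k)$ into the continuous variables $(\alpha,\xi)$ already appearing inside $g_r^\epsilon$ and $\psi.$ Since $f_r=\sum_\epsilon f_r^\epsilon$ and the Fourier coefficient is linear in the function, it suffices to establish the formula for each $\widehat{f_r^\epsilon}$ and then sum over $\epsilon\in\{1,-1\}^E.$

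First I would substitute
$$f_r^\epsilon(2\mathbf a',\mathbf k)=\psi\Big(\tfrac{4\pi\mathbf a'}{r},\tfrac{2\pi\mathbf k}{r}\Big)\,g_r^\epsilon(2\mathbf a',\mathbf k)$$
into the Fourier integral and then apply the change of variables $\alpha_i=4\pi a_i'/r$ for $i=1,\dots,|E|$ and $\xi_s=2\pi k_s/r$ for $s=1,\dots,|T|.$ The Jacobian is
$$\Big(\frac{r}{4\pi}\Big)^{|E|}\Big(\frac{r}{2\pi}\Big)^{|T|}=\frac{r^{|E|+|T|}}{2^{2|E|+|T|}\,\pi^{|E|+|T|}},$$
which is exactly the prefactor in the proposition; the support of $\psi$ transfers the new region of integration to $\mathrm{D_H}.$ Since $a_i=2a_i'$ gives $2\pi a_i/r=\alpha_i,$ the discrete prefactor in $g_r^\epsilon$ becomes $\exp\bigl(\sum_i\epsilon_i\sqrt{-1}(\alpha_i+\beta_i+\tfrac{2\pi}{r})\bigr),$ while the body $\exp\bigl(\tfrac{r}{4\pi\sqrt{-1}}\mathcal W_r^\epsilon(\tfrac{2\pi\mathbf a}{r},\tfrac{2\pi\mathbf k}{r})\bigr)$ becomes $\exp\bigl(\tfrac{r}{4\pi\sqrt{-1}}\mathcal W_r^\epsilon(\alpha,\xi)\bigr)$ by the definition of $\mathcal W_r^\epsilon$ in Proposition \ref{computation}. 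For the Fourier characters, plugging in $a_i'=r\alpha_i/(4\pi)$ and $k_s=r\xi_s/(2\pi)$ and using $1/\sqrt{-1}=-\sqrt{-1}$ gives
$$e^{2\pi\sqrt{-1}m_ia_i'}=\exp\!\Big(\tfrac{r}{4\pi\sqrt{-1}}(-2\pi m_i\alpha_i)\Big),\qquad e^{2\pi\sqrt{-1}n_sk_s}=\exp\!\Big(\tfrac{r}{4\pi\sqrt{-1}}(-4\pi n_s\xi_s)\Big),$$
and combining everything under a single exponential yields the claimed integrand. Specializing to $(\mathbf m,\mathbf n)=(\mathbf 0,\mathbf 0)$ gives the second formula.

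The argument is pure bookkeeping, so no genuine obstacle arises. The one subtlety worth flagging is that the doubling $a_i=2a_i',$ introduced in the paragraph preceding the proposition precisely so that Poisson summation can be applied on the integer lattice rather than the even sublattice, rescales the $\alpha$-coordinates by an extra factor of $2$ relative to the $\xi$-coordinates. This asymmetry is exactly what produces the exponent $2^{2|E|+|T|}$ (rather than $2^{|E|+|T|}$) in the denominator of the normalizing constant, and confirming that this power of $2$ comes out correctly is the main arithmetic sanity check of the proof.
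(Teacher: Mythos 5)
Your proposal is correct and follows exactly the route the paper takes (the paper simply says ``by the change of variable, and by changing $2a_i'$ back to $a_i$'' and states the result). The linearity in $\epsilon$, the substitution $\alpha_i=4\pi a_i'/r$, $\xi_s=2\pi k_s/r$ with Jacobian $(r/4\pi)^{|E|}(r/2\pi)^{|T|}=r^{|E|+|T|}/(2^{2|E|+|T|}\pi^{|E|+|T|})$, and the rewriting of the Fourier characters in the $\frac{r}{4\pi\sqrt{-1}}(\cdots)$ form all check out, and your observation about the asymmetric rescaling producing $2^{2|E|+|T|}$ rather than $2^{|E|+|T|}$ is the right sanity check.
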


\begin{proposition}\label{Poisson}
$$\mathrm {TV}_r(M,E,\mathbf b)=\frac{(-1)^{|E|\big(\frac{r}{2}+1\big)}2^{\mathrm{rank H}(M;\mathbb Z_2)-|T|}}{\{1\}^{|E|-|T|}}\sum_{(\mathbf m,\mathbf n)\in\mathbb Z^{|E|+|T|}}\widehat{ f_r}(\mathbf m,\mathbf n)+\text{error term}.$$
\end{proposition}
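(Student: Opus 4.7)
The plan is to obtain Proposition \ref{Poisson} by substituting variables in Proposition \ref{computation}, replacing an indicator function with the smooth bump $\psi$ to produce a Schwartz function, and then applying Poisson summation.

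First, starting from Proposition \ref{computation}, I would perform the substitution $a_i = 2a_i'$ so that the sum runs over $\mathbf{a}' \in \mathbb{Z}^{|E|}$ rather than even integers $a_i \in \{0,2,\dots,r-3\}$. This change of variables is essential because the Poisson formula on $\mathbb{Z}^{|E|+|T|}$ expects unit-spaced lattice points, and it is also the rescaling that produces the exponential weights $e^{2\pi\sqrt{-1} m_i a_i'}$ appearing in the Fourier coefficients computed in Proposition \ref{4.2}. At this stage the sum is still finite, restricted to tuples $(\mathbf{a},\mathbf{k})$ that make each $6$-tuple $(a_{s_1},\dots,a_{s_6})$ $r$-admissible and each $k_s$ lie between $\max\{T_{s_i}\}$ and $\min\{Q_{s_j},r-2\}$.

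Second, I would replace the characteristic function of this admissibility region by the smooth cutoff $\psi$ supported in $\mathrm{D_H}$ that equals $1$ on $\mathrm{D_H^\delta}$. Setting $f_r^\epsilon(\mathbf{a},\mathbf{k}) = \psi\!\left(\tfrac{2\pi\mathbf{a}}{r}, \tfrac{2\pi\mathbf{k}}{r}\right) g_r^\epsilon(\mathbf{a},\mathbf{k})$ and $f_r = \sum_\epsilon f_r^\epsilon$, I get a compactly supported $C^\infty$ function on $\mathbb{R}^{|E|+|T|}$, hence Schwartz. By construction,
\begin{equation*}
\sum_{(\mathbf{a}',\mathbf{k}) \in \mathbb{Z}^{|E|+|T|}} f_r(2\mathbf{a}',\mathbf{k}) = \sum_{(\mathbf{a},\mathbf{k})} \Big(\sum_\epsilon g_r^\epsilon(\mathbf{a},\mathbf{k})\Big) + \mathrm{err}_r,
\end{equation*}
where the summation on the right-hand side is the one in Proposition \ref{computation}, and $\mathrm{err}_r$ collects two kinds of discrepancies: contributions from lattice points $(\mathbf{a},\mathbf{k})$ that are $r$-admissible but not of the hyperideal type (where $\psi = 0$ but the original sum includes $g_r^\epsilon$), and contributions from the buffer shell $\mathrm{D_H}\setminus\mathrm{D_H^\delta}$ (where $0 < \psi < 1$, producing a weighted rather than unit contribution).

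Third, applying the Poisson Summation Formula to the Schwartz function $f_r$ gives
\begin{equation*}
\sum_{(\mathbf{a}',\mathbf{k}) \in \mathbb{Z}^{|E|+|T|}} f_r(2\mathbf{a}',\mathbf{k}) = \sum_{(\mathbf{m},\mathbf{n}) \in \mathbb{Z}^{|E|+|T|}} \widehat{f_r}(\mathbf{m},\mathbf{n}),
\end{equation*}
with the Fourier coefficients given by Proposition \ref{4.2} after the straightforward change of variables $\alpha_i = \tfrac{2\pi \cdot 2 a_i'}{r} = \tfrac{2\pi a_i}{r}$ and $\xi_s = \tfrac{2\pi k_s}{r}$ (this is what produces the Jacobian factor $r^{|E|+|T|}/(2^{2|E|+|T|}\pi^{|E|+|T|})$). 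Multiplying both sides by the prefactor $(-1)^{|E|(r/2+1)} 2^{\mathrm{rank}\,H(M;\mathbb{Z}_2) - |T|}/\{1\}^{|E|-|T|}$ from Proposition \ref{computation} yields the stated identity, with the error term absorbing the discrepancies described above.

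The main obstacle is conceptual rather than computational: the identity as stated is essentially bookkeeping once one agrees to bundle the smoothing losses into the symbolic $\mathrm{error\ term}$. The genuine work is to show later (in Section \ref{ee}) that this error term is exponentially smaller than $e^{r\,\mathrm{Vol}/2\pi}$, so that the leading asymptotics of $\mathrm{TV}_r$ are detected by the Fourier coefficients $\widehat{f_r}(\mathbf{m},\mathbf{n})$; at the level of Proposition \ref{Poisson} itself, no such estimate is needed, and the proof reduces to the two formal steps above.
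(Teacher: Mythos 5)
Your proposal is correct and follows essentially the same route as the paper: substitute $a_i = 2a_i'$, multiply by the bump function $\psi$ to get a Schwartz function $f_r = \sum_\epsilon f_r^\epsilon$, apply the Poisson Summation Formula, and absorb the discrepancy (from $r$-admissible lattice points outside $\mathrm{D_H}$ where $\psi=0$, and from the shell $\mathrm{D_H}\setminus\mathrm{D_H^\delta}$ where $0<\psi<1$) into the error term. The paper treats this step as exactly the bookkeeping you describe and defers the actual estimate of the error term to Section \ref{ee}.
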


We will estimate the leading Fourier coefficients, the non-leading Fourier coefficients and the error term respectively in Sections \ref{leading}, \ref{ot} and \ref{ee}, and prove Theorem \ref{main} in Section \ref{pf}.


\section{Asymptotics}\label{Asy}

The goal of this section is to prove Theorem \ref{main}. The main tool we use is Proposition \ref{saddle}, which is a generalization of the standard Saddle Point Approximation \cite{O} and whose proof can be found in \cite[Appendix]{WY2}. With a geometric preparation in Sections \ref{cv} and \ref{convex}, we  in Section \ref{leading} estimate the leading Fourier coefficients, and respectively in Sections \ref{ot} and \ref{ee} estimate the non-leading Fourier coefficients and the error term in the formula of the relative Turaev-Viro invariants obtained in  Propositions \ref{4.2} and \ref{Poisson}.

\begin{proposition}\label{saddle}
Let $D_{\mathbf z}$ be a region in $\mathbb C^n$ and let $D_{\mathbf a}$ be a region in $\mathbb R^k.$ Let $f(\mathbf z,\mathbf a)$ and $g(\mathbf z,\mathbf a)$ be complex valued functions on $D_{\mathbf z}\times D_{\mathbf a}$  which are holomorphic in $\mathbf z$ and smooth in $\mathbf a.$ For each positive integer $r,$ let $f_r(\mathbf z,\mathbf a)$ be a complex valued function on $D_{\mathbf z}\times D_{\mathbf a}$ holomorphic in $\mathbf z$ and smooth in $\mathbf a.$
For a fixed $\mathbf a\in D_{\mathbf a},$ let $f^{\mathbf a},$ $g^{\mathbf a}$ and $f_r^{\mathbf a}$ be the holomorphic functions  on $D_{\mathbf z}$ defined by
$f^{\mathbf a}(\mathbf z)=f(\mathbf z,\mathbf a),$ $g^{\mathbf a}(\mathbf z)=g(\mathbf z,\mathbf a)$ and $f_r^{\mathbf a}(\mathbf z)=f_r(\mathbf z,\mathbf a).$ Suppose $\{\mathbf a_r\}$ is a convergent sequence in $D_{\mathbf a}$ with $\lim_r\mathbf a_r=\mathbf a_0,$ $f_r^{\mathbf a_r}$ is of the form
$$ f_r^{\mathbf a_r}(\mathbf z) = f^{\mathbf a_r}(\mathbf z) + \frac{\upsilon_r(\mathbf z,\mathbf a_r)}{r^2},$$
$\{S_r\}$ is a sequence of embedded real $n$-dimensional closed disks in $D_{\mathbf z}$ sharing the same boundary and converging to an embedded $n$-dimensional closed disk $S_0,$ and $\mathbf c_r$ is a point on $S_r$ such that  $\{\mathbf c_r\}$ is convergent in $D_{\mathbf z}$ with $\lim_r\mathbf c_r=\mathbf c_0.$ If for each $r$
\begin{enumerate}[(1)]
\item $\mathbf c_r$ is a critical point of $f^{\mathbf a_r}$ in $D_{\mathbf z},$
\item $\mathrm{Re}f^{\mathbf a_r}(\mathbf c_r) > \mathrm{Re}f^{\mathbf a_r}(\mathbf z)$ for all $\mathbf z \in S\setminus \{\mathbf c_r\},$
\item the domain $\{\mathbf z\in D_{\mathbf z}\ |\ \mathrm{Re}f^{\mathbf a_r}(\mathbf z)<f^{\mathbf a_r}(\mathbf c_r)\}$ deformation retracts to $S_r\setminus\{\mathbf c_r\},$
\item $|g^{\mathbf a_r}(\mathbf c_r)|$ is bounded from below by a positive constant independent of $r,$
\item $|\upsilon_r(\mathbf z, \mathbf a_r)|$ is bounded from above by a constant independent of $r$ on $D_{\mathbf z},$ and
\item  the Hessian matrix $\mathrm{Hess}(f^{\mathbf a_0})$ of $f^{\mathbf a_0}$ at $\mathbf c_0$ is non-singular,
\end{enumerate}
then
\begin{equation*}
\begin{split}
 \int_{S_r} g^{\mathbf a_r}(\mathbf z) e^{rf_r^{\mathbf a_r}(\mathbf z)} d\mathbf z= \Big(\frac{2\pi}{r}\Big)^{\frac{n}{2}}\frac{g^{\mathbf a_r}(\mathbf c_r)}{\sqrt{-\det\mathrm{Hess}(f^{\mathbf a_r})(\mathbf c_r)}} e^{rf^{\mathbf a_r}(\mathbf c_r)} \Big( 1 + O \Big( \frac{1}{r} \Big) \Big).
 \end{split}
 \end{equation*}
\end{proposition}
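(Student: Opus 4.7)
My plan is to reduce the statement to the classical complex saddle point method (the method of steepest descent) and then to ensure all error estimates are uniform in $r$. First, because $f_r^{\mathbf{a}_r}(\mathbf{z}) = f^{\mathbf{a}_r}(\mathbf{z}) + \upsilon_r(\mathbf{z},\mathbf{a}_r)/r^2$ with $|\upsilon_r|$ uniformly bounded by (5), we have $e^{r f_r^{\mathbf{a}_r}} = e^{r f^{\mathbf{a}_r}}\bigl(1 + O(1/r)\bigr)$ uniformly on $D_{\mathbf{z}}$, so it suffices to prove the asserted asymptotic for $\int_{S_r} g^{\mathbf{a}_r}(\mathbf{z}) e^{r f^{\mathbf{a}_r}(\mathbf{z})} d\mathbf{z}$. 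Moreover, the integrand $g^{\mathbf{a}_r}(\mathbf{z}) e^{r f^{\mathbf{a}_r}(\mathbf{z})} d\mathbf{z}$ is a closed holomorphic $n$-form on $D_{\mathbf{z}}$, so by Stokes' theorem this integral depends only on $\partial S_r$. This freedom to deform $S_r$ keeping its boundary fixed is what allows us to reduce to a contour along which the saddle point calculation is transparent.

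Next I would localize near $\mathbf{c}_r$. Pick a neighborhood $U$ of $\mathbf{c}_0$ on which the holomorphic Morse lemma applies to $f^{\mathbf{a}_0}$ at $\mathbf{c}_0$; this is possible since by continuity from (1) $\mathbf{c}_0$ is a critical point of $f^{\mathbf{a}_0}$, whose Hessian is non-singular by (6). For $r$ large, $\mathbf{c}_r \in U$ and $\mathrm{Hess}(f^{\mathbf{a}_r})(\mathbf{c}_r)$ stays close to $\mathrm{Hess}(f^{\mathbf{a}_0})(\mathbf{c}_0)$, so in particular $|\det \mathrm{Hess}(f^{\mathbf{a}_r})(\mathbf{c}_r)|$ is bounded below uniformly in $r$. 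The parameter-dependent holomorphic Morse lemma then yields a smooth family of local holomorphic coordinates $\mathbf{w}$ centered at $\mathbf{c}_r$ on $U$ with $f^{\mathbf{a}_r}(\mathbf{z}) = f^{\mathbf{a}_r}(\mathbf{c}_r) + \tfrac{1}{2}\sum_{i=1}^n w_i^2$. Using condition (2) together with the convergences $\mathbf{a}_r \to \mathbf{a}_0$, $\mathbf{c}_r \to \mathbf{c}_0$ and a compactness argument, the gap $\mathrm{Re}\, f^{\mathbf{a}_r}(\mathbf{c}_r) - \sup_{S_r \setminus U}\mathrm{Re}\, f^{\mathbf{a}_r}$ is bounded below by a positive constant independent of $r$; hence the contribution from $S_r \setminus U$ is exponentially smaller than $e^{r\,\mathrm{Re}\, f^{\mathbf{a}_r}(\mathbf{c}_r)}$ and is absorbed into the error.

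Inside $U$, I would use the Stokes freedom to replace $S_r \cap U$ by the steepest descent $n$-chain through $\mathbf{c}_r$ (the image of $\mathbf{w} \in \sqrt{-1}\,\mathbb{R}^n$), on which $\sum w_i^2$ is real and non-positive with a strict maximum $0$ at $\mathbf{c}_r$. Factoring out $g^{\mathbf{a}_r}(\mathbf{c}_r)$, expanding $g^{\mathbf{a}_r}$ in a Taylor series at $\mathbf{c}_r$, and inserting the Jacobian of the Morse change of variables (whose square at $\mathbf{c}_r$ equals $\det \mathrm{Hess}(f^{\mathbf{a}_r})(\mathbf{c}_r)$ up to sign), the leading integral becomes the standard Gaussian $\int_{\mathbb{R}^n} e^{-\frac{r}{2}\sum u_i^2}\,d\mathbf{u} = (2\pi/r)^{n/2}$, producing exactly the asserted factor $(2\pi/r)^{n/2}\, g^{\mathbf{a}_r}(\mathbf{c}_r)/\sqrt{-\det \mathrm{Hess}(f^{\mathbf{a}_r})(\mathbf{c}_r)} \cdot e^{r f^{\mathbf{a}_r}(\mathbf{c}_r)}$; the Taylor remainder in $g$ and the perturbation $\upsilon_r/r^2$ together contribute the $1 + O(1/r)$. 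Condition (4) ensures this leading term is non-negligible, so the $O(1/r)$ really is a relative error.

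The main obstacle is ensuring the uniformity of all error estimates in $r$. The single-integral saddle point argument is classical, but here both the integrand and the integration cycle vary with $r$. Uniformity of the localization gap is a compactness statement combining (2) with the convergences in the hypotheses. Uniformity of the Morse change of coordinates and of the Gaussian remainder rely crucially on (6), which prevents $\mathrm{Hess}(f^{\mathbf{a}_r})(\mathbf{c}_r)$ from degenerating as $r \to \infty$. Together with the bounds from (4) and (5), these uniformity inputs let the classical argument run with constants depending only on the limit data $(\mathbf{a}_0,\mathbf{c}_0)$.
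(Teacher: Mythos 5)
The paper does not actually prove Proposition~\ref{saddle}: it only records the statement and cites \cite[Appendix]{WY2} for the proof. Your proposal is therefore not being measured against a proof contained in this paper, but it is a faithful sketch of the standard multidimensional steepest-descent argument that the cited appendix gives, and the overall strategy (absorb the $\upsilon_r/r^2$ perturbation into a $1+O(1/r)$ factor using hypothesis (5), localize near $\mathbf c_r$ via (2) and compactness, pass to holomorphic Morse coordinates using (3) and (6), deform the cycle to steepest descent, and evaluate the Gaussian) is correct in outline.

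Two points deserve more care than the sketch gives them. First, ``use the Stokes freedom to replace $S_r\cap U$ by the steepest descent chain'' is not literally a Stokes argument, since the steepest-descent chain and $S_r\cap U$ do not share a boundary; what one needs is a homotopy of $S_r$, fixed outside $U$, to a cycle that coincides with the steepest-descent disk near $\mathbf c_r$ while keeping $\mathrm{Re}\,f^{\mathbf a_r}$ strictly below $\mathrm{Re}\,f^{\mathbf a_r}(\mathbf c_r)$ on the transition region. Hypotheses (2), (3), and (6), together with the Morse coordinates, do make this possible, but it is exactly the step where the uniformity in $r$ must be verified and where a fully written proof would spend its effort. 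Second, you do not track the factor of $(\sqrt{-1})^n$ coming from $d\mathbf w=(\sqrt{-1})^n\,d\mathbf u$ on the steepest-descent chain, nor specify the branch of $\sqrt{-\det\mathrm{Hess}}$; the proposition statement is itself silent on the branch, so this is a conventional issue, but a complete proof should make the choice explicit so the two $(\sqrt{-1})$'s cancel consistently. Neither of these is a wrong idea — both are standard technicalities of the steepest-descent method — but in a proof (as opposed to a sketch) they are where the real work lies.
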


For a fixed $\{\beta_1,\dots,\beta_{|E|}\},$ let $\theta_i=2|\beta_i-\pi|$ for each $i\in\{1,\dots, |E|\}.$  The function $\mathcal W_r^{\epsilon}$ is approximated by the following function
$$\mathcal W^{\epsilon}(\boldsymbol\alpha,\boldsymbol\xi)=-\sum_{i=1}^{|E|}2\epsilon_i(\alpha_i-\pi)(\beta_i-\pi)+\sum_{s=1}^{|T|}U(\alpha_{s_1},\dots,\alpha_{s_6},\xi_s).$$
The approximation will be specified in the proof of Proposition \ref{critical}. Notice that $\mathcal W^{\epsilon}$ is continuous on 
$$\mathrm{D_{H,\mathbb C}}=\big\{(\boldsymbol\alpha,\boldsymbol\xi)\in\mathbb C^{|E|+|T|}\ \big|\ (\mathrm{Re}(\boldsymbol\alpha),\mathrm{Re}(\boldsymbol\xi))\in \mathrm{D_{H}}\big\}$$ and for any $\delta>0$ is analytic on 
$$\mathrm{D^\delta_{H,\mathbb C}}=\big\{(\boldsymbol\alpha,\boldsymbol\xi)\in\mathbb C^{|E|+|T|}\ \big|\ (\mathrm{Re}(\boldsymbol\alpha),\mathrm{Re}(\boldsymbol\xi))\in \mathrm{D^\delta_{H}}\big\},$$ 
where $\mathrm{Re}(\boldsymbol\alpha)=(\mathrm{Re}(\alpha_1),\dots, \mathrm{Re}(\alpha_{|E|}))$ and $\mathrm{Re}(\boldsymbol\xi)=(\mathrm{Re}(\xi_1),\dots, \mathrm{Re}(\xi_{|T|})).$

In the rest of this paper, we assume that 	$\theta_1,\dots,\theta_{|E|}$ are sufficiently close to $0,$ or equivalently, $\beta_1,\dots,\beta_{|E|}$ are sufficiently close to $\pi.$ In the special case $\beta_i=\dots=\beta_{|E|}=\pi,$ a direct computation shows that $\xi(\pi,\dots,\pi)=\frac{7\pi}{4}.$ For $\delta>0,$ we denote by $\mathrm{D_{\delta,\mathbb C}}$  the $L^\infty$ $\delta$-neighborhood  of $\big(\pi,\dots,\pi,\frac{7\pi}{4},\dots,\frac{7\pi}{4}\big)$ in $\mathbb C^{|E|+|T|},$  that is 
$$\mathrm{D_{\delta,\mathbb C}}=\Big\{(\boldsymbol\alpha,\boldsymbol\xi)\in \mathbb C^{|E|+|T|}\ \Big|\ d_{L^\infty}\Big((\boldsymbol\alpha,\boldsymbol\xi),\Big(\pi,\dots,\pi,\frac{7\pi}{4},\dots,\frac{7\pi}{4}\Big)\Big)<\delta\Big\},$$
where $d_{L^\infty}$ is the real $L^\infty$ norm on $\mathbb C^n$ defined by
$$d_{L^\infty}(\mathbf x,\mathbf y)=\max_{i\in\{1,\dots,n\}}\{|\mathrm {Re}(x_i)-\mathrm{Re}(y_i)|, |\mathrm {Im}(x_i)-\mathrm{Im}(y_i)| \},$$
where $\mathbf x=(x_1,\dots,x_n)$ and $\mathbf y=(y_1,\dots,y_n).$  We will also consider the region 
$$\mathrm{D_{\delta}}=\mathrm{D_{\delta,\mathbb C}}\cap \mathbb R^{|E|+|T|}.$$

\subsection{Critical points and critical values of $\mathcal W^{\epsilon}$}\label{cv}

Suppose $\{\beta_1,\dots,\beta_{|E|}\}$ are sufficiently close to $\pi.$ Let $\theta_i=2|\beta_i-\pi|$ for $i\in \{1,\dots,|E|\},$  and let $\mu_i=1$ if $\beta_i\geqslant\pi$ and let $\mu_i=-1$ if $\beta_i\leqslant \pi$ so that 
$$\mu_i\theta_i=2(\beta_i-\pi).$$

\begin{proposition}\label{crit} For each $i\in \{1,\dots,|E|\},$ let $l_i$ be the length of the edge $e_i$ in $M_{E_{\boldsymbol\theta}}$ (the hyperbolic polyhedral metric on $(M,\mathcal T)$ with cone angles $\boldsymbol\theta),$ and let
\begin{equation}\label{alpha}
\alpha^*_i=\pi+\epsilon_i\mu_i\sqrt{-1}l_i.
\end{equation}
For each $s\in\{1,\dots,|T|\},$  let $\xi^*_s=\xi(\alpha^*_{s_1},\dots,\alpha^*_{s_6}).$
Then $\mathcal W^{\epsilon}$ has a critical point 
$$\boldsymbol z^{\epsilon}=\big(\alpha^*_1,\dots,\alpha^*_{|E|},\xi^*_1,\dots,\xi^*_{|T|}\big)$$
in $\mathrm{D_{\delta,\mathbb C}}$ with critical value $$2|T|\pi^2+2\sqrt{-1}\mathrm{Vol}(M_{E_{\boldsymbol\theta}}).$$
\end{proposition}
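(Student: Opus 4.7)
The plan is to verify that $z^{\epsilon}$ solves the critical point equations of $\mathcal W^{\epsilon}$ and then compute the critical value by assembling the co-volume formula one tetrahedron at a time. First, the equations $\partial \mathcal W^{\epsilon}/\partial \xi_s = 0$ hold automatically at $z^{\epsilon}$: the cross term of $\mathcal W^{\epsilon}$ is independent of $\xi$, and $\xi_s^* = \xi(\alpha^*_{s_1},\dots,\alpha^*_{s_6})$ is defined precisely by (\ref{xia}). For the $\alpha$-equations, I would use the envelope-theorem observation that at a point where $\partial U/\partial \xi_s = 0$, the partial derivative $\partial U/\partial \alpha_i$ coincides with $\partial W/\partial \alpha_i$ evaluated at $\alpha^*$, because the chain rule term $(\partial U/\partial \xi_s)(\partial \xi/\partial \alpha_i)$ vanishes. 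This reduces the computation to differentiating $W$.

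Next, by Theorem \ref{co-vol}, $W(\pi \pm \sqrt{-1}l_1,\dots,\pi \pm \sqrt{-1}l_6) = 2\pi^2 + 2\sqrt{-1}\,\mathrm{Cov}(l_1,\dots,l_6)$; combined with Lemma \ref{Sch} ($\partial \mathrm{Cov}/\partial l_i = \theta_i/2$) and the chain rule through $l_i = -\epsilon_i\mu_i\sqrt{-1}(\alpha_i-\pi)$, one obtains $\partial W/\partial \alpha_i = \epsilon_i\mu_i\theta^{(s)}_i$, where $\theta^{(s)}_i$ denotes the dihedral angle of $\Delta_s$ at $e_i$. Summing over all tetrahedra $\Delta_s$ containing $e_i$ assembles the individual dihedral angles into the total cone angle $\theta_i$, so the $\alpha_i$-equation becomes
\begin{equation*}
-2\epsilon_i(\beta_i-\pi) + \epsilon_i\mu_i\theta_i = 0,
\end{equation*}
equivalently $2(\beta_i-\pi) = \mu_i\theta_i$, which is exactly the defining relation of $\mu_i$ and $\theta_i$. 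Hence $z^{\epsilon}$ is critical.

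For the critical value, split $\mathcal W^{\epsilon}(z^{\epsilon})$ into the cross term and the tetrahedron sum. The cross term simplifies as
\begin{equation*}
-\sum_{i=1}^{|E|} 2\epsilon_i(\alpha^*_i-\pi)(\beta_i-\pi) = -\sum_{i=1}^{|E|} 2\epsilon_i\bigl(\epsilon_i\mu_i\sqrt{-1}l_i\bigr)\bigl(\tfrac{\mu_i\theta_i}{2}\bigr) = -\sqrt{-1}\sum_{i=1}^{|E|} l_i\theta_i.
\end{equation*}
For the tetrahedron sum, Theorem \ref{co-vol} gives $U(\alpha^*_{s_1},\dots,\alpha^*_{s_6},\xi^*_s) = 2\pi^2 + 2\sqrt{-1}\,\mathrm{Cov}_s$, and the decomposition $\mathrm{Cov}_s = \mathrm{Vol}_s + \frac{1}{2}\sum_i \theta^{(s)}_i l_{s_i}$ summed over $s$ yields $\sum_s \mathrm{Cov}_s = \mathrm{Vol}(M_{E_\theta}) + \frac{1}{2}\sum_i l_i\theta_i$. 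The $\sqrt{-1}\sum_i l_i\theta_i$ contributions cancel, leaving $2|T|\pi^2 + 2\sqrt{-1}\,\mathrm{Vol}(M_{E_\theta})$. Finally, to place $z^{\epsilon}$ in $\mathrm{D}_{\delta,\mathbb C}$, note that as $\theta \to 0$ the hyperideal tetrahedra degenerate with all edge lengths $l_i \to 0$, so $\alpha^*_i \to \pi$ and, by continuity of (\ref{xia}), $\xi^*_s \to \xi(\pi,\dots,\pi) = 7\pi/4$; hence for sufficiently small $\theta$ the point $z^{\epsilon}$ lies in any prescribed neighborhood of $(\pi,\dots,\pi,\tfrac{7\pi}{4},\dots,\tfrac{7\pi}{4})$. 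The main subtlety is the bookkeeping of signs: one must check that the sign $\epsilon_i\mu_i$ in $\alpha^*_i$ is consistent across the different tetrahedra sharing $e_i$, and that the $\pm$ flexibility in Theorem \ref{co-vol} accommodates any choice of $\epsilon_i\mu_i \in \{\pm 1\}$ edge-by-edge, which it does.
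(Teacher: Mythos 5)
Your proposal is correct and follows essentially the same approach as the paper's proof: verifying the $\xi$-equations via the defining equation (\ref{xia}), using the envelope observation to reduce the $\alpha$-equations to differentiating $W$, applying Theorem \ref{co-vol} and Lemma \ref{Sch} through the chain rule to assemble the cone angle, and then computing the critical value via the same cancellation between the cross term and the $\frac{1}{2}\sum\theta_i l_i$ portion of the co-volume. The sign bookkeeping you verify at the end and the continuity argument for placing $z^{\epsilon}$ in $\mathrm{D_{\delta,\mathbb C}}$ match the paper's reasoning as well.
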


\begin{proof}   
For each $s\in\{1,\dots,|T|\},$  let $\boldsymbol\alpha_s=(\alpha_{s_1},\dots,\alpha_{s_6})$ and let $\boldsymbol\alpha^*_s=(\alpha^*_{s_1},\dots,\alpha^*_{s_6}).$ 

By (\ref{cos1}), if $\theta_i$'s are sufficiently small, then $l_i$'s are sufficiently close to $0$ and $\alpha^*_i$'s are sufficiently close to $\pi.$ Then by the continuity of $\xi(\boldsymbol\alpha_s)$ for each $s,$ $\boldsymbol z^{\epsilon}\in \mathrm D_{\delta,\mathbb C}.$

We first have
\begin{equation}\label{c1}
\frac{\partial \mathcal W^{\epsilon}}{\partial \xi_s}\Big|_{\boldsymbol z^\epsilon}=\frac{\partial U(\boldsymbol \alpha^*_s,\xi_s)}{\partial \xi_s}\Big|_{\xi^*_s}=0.
\end{equation}

Now let $W(\boldsymbol\alpha_s)=U(\boldsymbol\alpha_s,\xi(\boldsymbol\alpha_s))$ be the function defined in (\ref{W}). Then for $i\in \{s_1,\dots,s_6\},$
$$\frac{\partial  W(\boldsymbol\alpha_s)}{\partial \alpha_i}\Big|_{\boldsymbol\alpha^*_s}=\frac{\partial U(\boldsymbol\alpha_s,\xi_s)}{\partial \alpha_i}\Big|_{(\boldsymbol\alpha^*_s,\xi^*_s)}+\frac{\partial U(\boldsymbol\alpha_s,\xi_s)}{\partial \xi_s}\Big|_{(\boldsymbol\alpha^*_s,\xi^*_s)}\cdot\frac{\partial \xi(\boldsymbol\alpha_s)}{\partial \alpha_i}\Big|_{\boldsymbol\alpha^*_s}=\frac{\partial U(\boldsymbol\alpha_s,\xi_s)}{\partial \alpha_i}\Big|_{(\boldsymbol\alpha^*_s,\xi^*_s)}.$$
For each $s\in\{1,\dots,|T|\},$ let $(l_{s_1},\dots,l_{s_6})$ be the edge lengths  of $\Delta_s.$ Then by Theorem \ref{co-vol} and Lemma \ref{Sch}, we have 
\begin{equation*}
\begin{split}
\frac{\partial U(\boldsymbol\alpha_s,\xi_s)}{\partial \alpha_i}\Big|_{(\boldsymbol\alpha^*_s,\xi^*_s)}=\frac{\partial  W(\boldsymbol\alpha_s)}{\partial \alpha_i}\Big|_{\boldsymbol\alpha^*_s}=-\epsilon_i\mu_i \sqrt{-1}\cdot\frac{\partial W}{\partial l_i}\Big|_{(l_{s_1},\dots,l_{s_6})}=\epsilon_i\mu_i \theta_{s,i},
\end{split}
\end{equation*}
where $\theta_{s,i}$ is the dihedral angle of $\Delta_s$ at the edge $e_i.$ 
Then for each $i\in\{1,\dots, |E|\},$ 
\begin{equation}\label{c2}
\begin{split}
\frac{\partial \mathcal W^{\epsilon}}{\partial \alpha_i}\Big|_{\boldsymbol z^{\epsilon}}=&-2\epsilon_i(\beta_i-\pi)+\sum_{s=1}^{|T|}\frac{\partial U(\boldsymbol\alpha_s,\xi_s)}{\partial \alpha_i}\Big|_{(\boldsymbol\alpha^*_s,\xi^*_s)}\\
=&-2\epsilon_i(\beta_i-\pi)+\epsilon_i\mu_i\sum_{s=1}^{|E|} \theta_{s,i}=\epsilon_i\big(-2(\beta_i-\pi)+\mu_i\theta_i\big)=0.
\end{split}
\end{equation}
By (\ref{c1}) and (\ref{c2}), $\boldsymbol z^\epsilon$ is a critical point of $\mathcal W^\epsilon.$

Finally, we compute the critical value. For each $s\in\{1,\dots,|T|\},$ let $(l_{s_1},\dots,l_{s_6})$ and $(\theta_{s_1},\dots,\theta_{s_6})$ respectively be the edge lengths and the dihedral angles of $\Delta_s.$ Then by Theorem \ref{co-vol}, we have 
\begin{equation*}
\begin{split}
\mathcal W^{\epsilon}(\boldsymbol z^{\epsilon})=&-\sum_{i=1}^{|E|}2\epsilon_i(\sqrt{-1}\epsilon_i\mu_il_i)(\beta_i-\pi)+\sum_{s=1}^{|T|}\Big(2\pi^2+2\sqrt{-1}\Big(\mathrm{Vol}(\Delta_s)+\frac{1}{2}\sum_{k=1}^6\theta_{s_k}l_{s_k}\Big)\Big)\\
=&2|T|\pi^2+2\sum_{s=1}^{|T|}\sqrt{-1}\mathrm{Vol}(\Delta_s)+\sum_{i=1}^{|E|}2\sqrt{-1}\Big(-\mu_i(\beta_i-\pi)+\sum_{s=1}^{|T|}\theta_{s,i}\Big)l_i\\
=&2|T|\pi^2+2\sqrt{-1}\mathrm{Vol}(M_{E_\theta}).
\end{split}
\end{equation*}
\end{proof}


\subsection{ Convexity of $\mathcal W^{\epsilon}$}\label{convex}

\begin{proposition}\label{convexity} 
For a sufficiently small $\delta_0>0,$ the function $\mathcal W^{\epsilon}(\boldsymbol\alpha,\boldsymbol\xi)$ is strictly concave down in $\{\mathrm{Re}(\alpha_i)\}_{i=1}^{|E|} $ and $\{\mathrm{Re}(\xi_s)\}_{s=1}^{|T|},$ and is strictly concave up in $\{\mathrm{Im}(\alpha_i)\}_{i=1}^{|E|}$ and $\{\mathrm{Im}(\xi_s)\}_{s=1}^{|T|}$  on $\mathrm{D_{\delta_0,\mathbb C}}.$
\end{proposition}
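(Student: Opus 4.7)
The strategy is to exploit holomorphicity of $\mathcal W^\epsilon$ to reduce the two concavity assertions to a single Hessian sign computation, then to reduce to the symmetric center point by continuity, and finally to verify the sign via a block decomposition coming from the tetrahedra of $\mathcal T.$

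First, $\mathcal W^\epsilon(\alpha,\xi)$ is holomorphic in $(\alpha,\xi)$ on the interior of $\mathrm{D_{H,\mathbb C}},$ which contains $z_0 = (\pi,\dots,\pi,\tfrac{7\pi}{4},\dots,\tfrac{7\pi}{4})$ for $\delta_0$ small. For any holomorphic $h(z_1,\dots,z_n)$ with $z_k = x_k + \sqrt{-1}\,y_k,$ the Cauchy-Riemann equations give
$$\frac{\partial^2 \mathrm{Re}\,h}{\partial x_j\, \partial x_k} \;=\; -\,\frac{\partial^2 \mathrm{Re}\,h}{\partial y_j\, \partial y_k}.$$
Hence the Hessian of $\mathrm{Re}\,\mathcal W^\epsilon$ in the real-part coordinates is the negative of its Hessian in the imaginary-part coordinates, so the statements ``concave down in $\mathrm{Re}$'' and ``concave up in $\mathrm{Im}$'' are equivalent. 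Moreover, since the second partials of $\mathcal W^\epsilon$ are continuous on $\mathrm{D_{H,\mathbb C}},$ it suffices to show that the Hessian of $\mathrm{Re}\,\mathcal W^\epsilon$ in the real-part coordinates is negative definite at the single point $z_0;$ continuity then extends this to some $\mathrm{D_{\delta_0,\mathbb C}}.$

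Second, decompose $\mathcal W^\epsilon = -\sum_{i}2\epsilon_i(\alpha_i-\pi)(\beta_i-\pi) + \sum_{s}U(\alpha_{s_1},\dots,\alpha_{s_6},\xi_s).$ The first (bilinear in $\alpha$ with $\beta$ constant) term has vanishing Hessian, so the problem is purely about the $U$-sum. Because each $\xi_s$ appears in only one summand, the Hessian at $z_0$ has the block form $H=\bigl(\begin{smallmatrix}A & B \\ B^{T} & D\end{smallmatrix}\bigr)$ with $D$ diagonal in the $\xi$-indices. Negative-definiteness of $H$ is equivalent (by Schur complement) to $D$ negative definite together with $A - B D^{-1} B^{T}$ negative definite. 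Since $\xi^{*}(\alpha)$ is defined by the critical equation (\ref{xia}), the Schur complement coincides with the Hessian at $\alpha = (\pi,\dots,\pi)$ of $\widetilde F(\alpha) := \sum_s W(\alpha_{s_1},\dots,\alpha_{s_6}),$ where $W$ is the function of (\ref{W}).

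Third, both pieces are verified at the center by explicit computation. For $D,$ at $(\pi,\dots,\pi,\tfrac{7\pi}{4})$ all arguments of the dilogarithms in (\ref{term}) lie in $\{\pm 1,\pm\sqrt{-1}\};$ differentiating twice via $\tfrac{d^2}{dz^2}\mathrm{Li}_2(e^{2\sqrt{-1}z}) = 4e^{2\sqrt{-1}z}/(e^{2\sqrt{-1}z}-1)$ gives concrete complex numbers whose real parts make $\partial^2 \mathrm{Re}\,U/\partial(\mathrm{Re}\,\xi_s)^2 < 0.$ For the Schur complement, Theorem \ref{co-vol} gives $W(\pi \pm \sqrt{-1}l_1,\dots,\pi \pm \sqrt{-1} l_6) = 2\pi^2 + 2\sqrt{-1}\,\mathrm{Cov}(l_1,\dots,l_6);$ combining the chain rule with Lemma \ref{Sch}, the Hessian of $W$ in $\alpha$ at $(\pi,\dots,\pi)$ equals $-\sqrt{-1}$ times the Jacobian matrix $\bigl(\partial\theta_k/\partial l_j\bigr)$ of the length-to-angle map at the (degenerate) configuration $l=0.$ The rigidity of hyperideal tetrahedra \cite[Theorem 1.2(b)]{LY} says this Jacobian is nonsingular, and a direct inspection of its sign (or, equivalently, the fact that $\mathrm{Cov}$ is strictly concave near $l=0$) yields negative-definiteness of each tetrahedron's $6\times 6$ block. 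Assembling tetrahedra: for any $v \in \mathbb R^{|E|},$
$$v^{T}\bigl(A - B D^{-1} B^{T}\bigr)\,v \;=\; \sum_{s=1}^{|T|} v|_{I_s}^{T}\,H_s\, v|_{I_s},$$
where $I_s$ is the edge index set of $\Delta_s$ and $H_s$ its local negative-definite block; since every edge is in some tetrahedron, $v\neq 0$ forces $v|_{I_s} \neq 0$ for some $s,$ giving strict negativity.

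The main obstacle is the fourth step: the explicit dilogarithm evaluation at the degenerate point $l=0$ and verifying that the rigidity input of \cite{LY} gives not merely nonsingularity but the correct (negative) sign of each local Hessian. The restriction to sufficiently small cone angles $\theta$ --- and correspondingly to a sufficiently small $\mathrm{D_{\delta_0,\mathbb C}}$ around $z_0$ --- is precisely what keeps us in the regime where this local computation controls the global behavior of $\mathcal W^\epsilon.$
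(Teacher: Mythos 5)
There is a genuine gap, and it sits at the very first step. You reduce the proposition to showing that the Hessian of $\mathrm{Re}\,\mathcal W^\epsilon$ in the real-part coordinates is negative definite at $z_0=(\pi,\dots,\pi,\tfrac{7\pi}{4},\dots,\tfrac{7\pi}{4}).$ But the quantity the proposition (and every subsequent estimate in the paper) actually cares about is $\mathrm{Im}\,\mathcal W^\epsilon$: this is what governs $\big|e^{\frac{r}{4\pi\sqrt{-1}}\mathcal W^\epsilon}\big|=e^{\frac{r}{4\pi}\mathrm{Im}\,\mathcal W^\epsilon},$ and it is $\mathrm{Im}\,\mathcal W^\epsilon=\sum_s 2V(\alpha_{s_1},\dots,\alpha_{s_6},\xi_s)$ that the paper's proof analyzes via the Lobachevsky function. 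The distinction is fatal here, not merely cosmetic: at $z_0$ the complex Hessian of $U$ turns out to be \emph{pure imaginary} (for instance, $\partial^2 U/\partial\xi_s^2=-16\sqrt{-1}$ and $\partial^2 U/\partial\alpha_i\partial\xi_s=4\sqrt{-1}$ there), so the Hessian of $\mathrm{Re}\,\mathcal W^\epsilon$ in the real-part coordinates \emph{vanishes} at $z_0.$ The negative definiteness you aim to prove is false for $\mathrm{Re}\,\mathcal W^\epsilon.$

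There is a second, related error in the Schur-complement leg. You claim the Hessian of $W$ at $(\pi,\dots,\pi)$ is $-\sqrt{-1}$ times $\partial\theta/\partial l$ (up to a harmless factor of $\tfrac12$) and then say that concavity of $\mathrm{Cov}$ near $l=0$ gives the desired negativity. But $\mathrm{Cov}$ is strictly \emph{convex}, not concave: by Lemma \ref{Sch}, $\mathrm{Hess}_l\mathrm{Cov}=\tfrac12\,\partial\theta/\partial l,$ and since the Schl\"afli identity makes $\mathrm{Hess}_\theta\mathrm{Vol}=-\tfrac12\,\partial l/\partial\theta$ negative definite, $\partial\theta/\partial l$ is positive definite. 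The correct chain is: $\partial\theta/\partial l$ positive definite $\Rightarrow$ $\mathrm{Im}\bigl(-\tfrac{\sqrt{-1}}{2}\,\partial\theta/\partial l\bigr)=-\tfrac12\,\partial\theta/\partial l$ negative definite, which is the Hessian of $\mathrm{Im}\,W$ in the real-part coordinates. Your version, as written, would give a positive-definite block. On top of this, there is a technical soft spot you flag yourself: the rigidity result in \cite{LY} addresses interior hyperideal tetrahedra, whereas you need the Jacobian $\partial\theta/\partial l$ and the implicit function $\xi(\alpha)$ at the degenerate boundary $l=0$ (the regular ideal octahedron), where some separate justification of smoothness and nondegeneracy is required. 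The paper sidesteps all of these issues by computing the entries of the Hessian of $\mathrm{Im}\,\mathcal W^\epsilon$ \emph{directly} at $z_0$ (obtaining $-2,\ -1,\ 2,\ -8$ in the respective slots) and checking negative definiteness by hand; the Schur complement and the detour through Theorem \ref{co-vol} and \cite{LY} are not needed.
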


\begin{proof} We first consider the special case $\{\alpha_i\}_{i=1}^{|E|}$ and $\{\xi_s\}_{s=1}^{|T|}$ are real.  In this case, 
$$\mathrm{Im}\mathcal W^{\epsilon}(\boldsymbol\alpha,\boldsymbol\xi)=\sum_{s=1}^{|T|}2V(\alpha_{s_1},\dots,\alpha_{s_6},\xi_s)$$
for $V:\mathrm{B_H}\to\mathbb R$ defined by 
\begin{equation}\label{V}
\begin{split}
V(\alpha_1,\dots,\alpha_6,\xi)=\,&\delta(\alpha_1,\alpha_2,\alpha_3)+\delta(\alpha_1,\alpha_5,\alpha_6)+\delta(\alpha_2,\alpha_4,\alpha_6)+\delta(\alpha_3,\alpha_4,\alpha_5)\\
&-\Lambda(\xi)+\sum_{i=1}^4\Lambda(\xi-\tau_i)+\sum_{j=1}^3\Lambda(\eta_j-\xi),
\end{split}
\end{equation}
where $\delta$ is defined by
$$\delta(\alpha,\beta,\gamma)=-\frac{1}{2}\Lambda\Big(\frac{\alpha+\beta-\gamma}{2}\Big)-\frac{1}{2}\Lambda\Big(\frac{\beta+\gamma-\alpha}{2}\Big)-\frac{1}{2}\Lambda\Big(\frac{\gamma+\alpha-\beta}{2}\Big)+\frac{1}{2}\Lambda\Big(\frac{\alpha+\beta+\beta}{2}\Big).$$

At $\big(\pi,\dots,\pi,\frac{7\pi}{4}\big),$ we have $\frac{\partial ^2\mathrm{Im}V}{\partial \alpha_{s_i}^2} =-2$ for $i\in \{1,\dots,6\},$ $\frac{\partial ^2V}{\partial \alpha_{s_i}\alpha_{s_j}} =-1$ for $i\neq j$ in $\{1,\dots,6\},$ $\frac{\partial ^2\mathrm{Im}V}{\partial \alpha_{s_i}\xi_s} =2$ for $i\in \{1,\dots,6\}$ and  $\frac{\partial ^2\mathrm{Im}V}{\partial \xi_s^2} =-8.$ Then a direct computation shows that,  at $\big(\pi,\dots,\pi,\frac{7\pi}{4}\big),$ the Hessian matrix 
of $\mathrm{Im}V$ in $\{\mathrm{Re}(\alpha_{s_i})\}_{i\in \{1,\dots,6\}}$ and $\mathrm{Re}(\xi_s)$ is negative definite. As a consequence, the Hessian matrix 
of $\mathrm{Im}\mathcal W^{\epsilon}$ in $\{\mathrm{Re}(\alpha_i)\}_{i\in I}$ and $\{\mathrm{Re}(\xi_s)\}_{s=1}^c$ is negative definite at $\big(\pi,\dots,\pi,\frac{7\pi}{4},\dots,\frac{7\pi}{4} \big).$

Then by the continuity, there exists a sufficiently small $\delta_0>0$ such that $(\boldsymbol\alpha,\boldsymbol\xi)\in \mathrm{D_{\delta_0,\mathbb C}},$ the Hessian matrix of $\mathrm{Im}\mathcal W^{\epsilon}$ with respect to $\{\mathrm{Re}(\alpha_i)\}_{i=1}^{|E|}$ and $\{\mathrm{Re}(\xi_s)\}_{s=1}^{|T|}$ is still negative definite, implying that $\mathrm{Im}\mathcal W^{\epsilon}$ is strictly concave down in $\{\mathrm{Re}(\alpha_i)\}_{i=1}^{|E|}$ and  $\{\mathrm{Re}(\xi_s)\}_{s=1}^{|T|}$ on $\mathrm{D_{\delta_0,\mathbb C}}.$ Since $\mathcal W^{\epsilon}$ is holomorphic, $\mathrm{Im}\mathcal W^{\epsilon}$ is strictly concave up in $\{\mathrm{Im}(\alpha_i)\}_{i=1}^{|E|}$ and $\{\mathrm{Im}(\xi_s)\}_{s=1}^{|T|}$ on $\mathrm{D_{\delta_0,\mathbb C}}.$
\end{proof}

\begin{proposition}\label{nonsingular} The Hessian matrix $\mathrm{Hess}\mathcal W^{\epsilon}$ of $\mathcal W^{\epsilon}$ with respect to $\{\alpha_i\}_{i=1}^{|E|}$ and $\{\xi_s\}_{s=1}^{|T|}$ is non-singular on $\mathrm{D_{\delta_0,\mathbb C}}.$
\end{proposition}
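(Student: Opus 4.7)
The plan is to deduce non-singularity of the complex Hessian directly from the real convexity statement in Proposition \ref{convexity}, using that $\mathcal{W}^\epsilon$ is holomorphic on $\mathrm{D_{\delta_0,\mathbb{C}}}$.

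First I would collect the two ingredients. Writing $\mathbf{x}$ for the tuple of real parts of $(\alpha,\xi)$ and $\mathcal{W}^\epsilon = u + \sqrt{-1}\,v$ with $u = \mathrm{Re}\,\mathcal{W}^\epsilon$ and $v = \mathrm{Im}\,\mathcal{W}^\epsilon$, the Cauchy--Riemann equations applied entry-wise to $\partial^2 \mathcal{W}^\epsilon / \partial z_j \partial z_k$ give
$$\mathrm{Hess}_{\mathbb{C}}\mathcal{W}^\epsilon \;=\; \mathrm{Hess}_{\mathbf{x}}\, u \;+\; \sqrt{-1}\,\mathrm{Hess}_{\mathbf{x}}\, v,$$
where $(z_1,\dots,z_{|E|+|T|})=(\alpha_1,\dots,\alpha_{|E|},\xi_1,\dots,\xi_{|T|})$. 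Both real Hessians on the right are real symmetric matrices. From Proposition \ref{convexity}, $v$ is strictly concave down in $\mathbf{x}$ throughout $\mathrm{D_{\delta_0,\mathbb{C}}}$, so $B := \mathrm{Hess}_{\mathbf{x}}\, v$ is negative definite on the entire region, while $A := \mathrm{Hess}_{\mathbf{x}}\, u$ is just some real symmetric matrix.

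Second, I would reduce the proposition to a purely linear-algebraic lemma: if $A, B$ are real symmetric $n\times n$ matrices and $B$ is negative definite, then $A + \sqrt{-1}\,B$ is non-singular. To prove this, suppose $(A + \sqrt{-1}\,B)(w_1 + \sqrt{-1}\,w_2) = 0$ for real vectors $w_1, w_2$. Separating real and imaginary parts gives
\begin{equation*}
A w_1 - B w_2 = 0, \qquad A w_2 + B w_1 = 0.
\end{equation*}
Forming the combination $w_2^T(A w_1 - B w_2) - w_1^T(A w_2 + B w_1)$ and using symmetry of $A$ (so $w_2^T A w_1 = w_1^T A w_2$) and of $B$ yields $w_1^T B w_1 + w_2^T B w_2 = 0$. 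Negative definiteness of $B$ forces $w_1 = w_2 = 0$, and the lemma follows.

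Applying this lemma pointwise on $\mathrm{D_{\delta_0,\mathbb{C}}}$ with $A = \mathrm{Hess}_{\mathbf{x}}\, u$ and $B = \mathrm{Hess}_{\mathbf{x}}\, v$ completes the proof. There is no real obstacle here beyond bookkeeping: the only non-routine input is Proposition \ref{convexity}, and the identification of $\mathrm{Hess}_{\mathbb{C}}\mathcal{W}^\epsilon$ with $\mathrm{Hess}_{\mathbf{x}}\, u + \sqrt{-1}\,\mathrm{Hess}_{\mathbf{x}}\, v$ uses only holomorphy. If anything, the mildly subtle point is making sure that the strict concavity shown at the base point $(\pi,\dots,\pi,\tfrac{7\pi}{4},\dots,\tfrac{7\pi}{4})$ persists throughout the complex tube $\mathrm{D_{\delta_0,\mathbb{C}}}$ rather than merely on its real slice, but this is exactly what Proposition \ref{convexity} already asserts.
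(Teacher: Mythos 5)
Your proof is correct and follows essentially the same route as the paper: decompose the complex Hessian as $\mathrm{Hess}_{\mathbf x}(\mathrm{Re}\,\mathcal W^{\epsilon}) + \sqrt{-1}\,\mathrm{Hess}_{\mathbf x}(\mathrm{Im}\,\mathcal W^{\epsilon})$ and conclude invertibility from the (negative) definiteness of one of the two real-symmetric summands supplied by Proposition \ref{convexity}. The only difference is that the paper simply cites a lemma of London for the linear-algebra step, whereas you prove it directly; and your bookkeeping is actually slightly cleaner than the paper's phrasing, which loosely says the ``real part'' of the Hessian is negative definite while Proposition \ref{convexity} in fact controls $\mathrm{Hess}_{\mathbf x}(\mathrm{Im}\,\mathcal W^{\epsilon})$, exactly as you identified.
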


\begin{proof} By Proposition \ref{convexity}, the real part of the $\mathrm{Hess}\mathcal W^{\epsilon}$ is negative definite. Then by \cite[Lemma]{L}, it is nonsingular.
\end{proof}


\subsection{Asymptotics of the leading Fourier coefficients}\label{leading}

\begin{proposition}\label{critical}
Suppose $\{\beta_1,\dots,\beta_{|E|}\}$ are in $\{\pi-\epsilon,\pi+\epsilon
\}$ for a sufficiently small $\epsilon>0.$ For $\epsilon\in\{1,-1\}^E,$ let  $\boldsymbol z^{\epsilon}$ be the critical point of $\mathcal W^{\epsilon}$ described in Proposition \ref{crit}. Then
$$\widehat{f^{\epsilon}_r}(0,\dots,0)=\frac{C^{\epsilon}(\boldsymbol z^{\epsilon})}{\sqrt{-\det\mathrm{Hess}\Big(\frac{\mathcal W^{\epsilon}(z^{\epsilon})}{4\pi \sqrt{-1}}\Big)}}e^{\frac{r}{2\pi}\mathrm{Vol}(M_{E_{\boldsymbol\theta}})}\Big( 1 + O \Big( \frac{1}{r} \Big) \Big),$$
where each $C^{\epsilon}(\boldsymbol z^{\epsilon})$ depends continuously on $\{\beta_1,\dots,\beta_{|E|}\}$ and when $\beta_1=\dots=\beta_{|E|}=\pi,$
$$C^{\epsilon}(\boldsymbol z^{\epsilon})=\frac{(-1)^{|T|}r^{\frac{|E|-|T|}{2}}}{2^{\frac{3|E|+|T|}{2}}\pi^{\frac{|E|+|T|}{2}}}.$$
\end{proposition}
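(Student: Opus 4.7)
The plan is to apply the Saddle Point Method (Proposition \ref{saddle}) to the integral expression in Proposition \ref{4.2}. Setting $n=|E|+|T|$, I would rewrite the exponent $\frac{r}{4\pi\sqrt{-1}}\mathcal W_r^\epsilon(\alpha,\xi)$ as $r\cdot f_r^\epsilon$ with $f_r^\epsilon = \mathcal W_r^\epsilon/(4\pi\sqrt{-1})$, and identify the limit $f^\epsilon = \mathcal W^\epsilon/(4\pi\sqrt{-1})$. Using Lemma \ref{converge}, especially the expansion $\varphi_r(\pi/r) = \mathrm{Li}_2(1) + \tfrac{2\pi\sqrt{-1}}{r}\log(r/2) - \pi^2/r + O(1/r^2)$ together with the first-order Taylor expansions $\varphi_r(z+k\pi/r) = \varphi_r(z)+\tfrac{k\pi}{r}\varphi_r'(z)+O(1/r^2)$, I would isolate the $O(1/r)$-discrepancy between $\mathcal W_r^\epsilon$ and $\mathcal W^\epsilon$ as an explicit factor
$$e^{\frac{r}{4\pi\sqrt{-1}}\mathcal W_r^\epsilon(\alpha,\xi)} = \Big(\frac{r}{2}\Big)^{-|T|}e^{H(\alpha,\xi)}e^{\frac{r}{4\pi\sqrt{-1}}\mathcal W^\epsilon(\alpha,\xi)}\big(1+O(1/r)\big),$$
where $H$ is a holomorphic function collecting the first-order corrections. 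The expression for $\widehat{f_r^\epsilon}(\mathbf 0,\mathbf 0)$ then takes the form $(r/2)^{-|T|}\int_{\mathrm{D_H}}\tilde g\cdot e^{rf^\epsilon}\,d\alpha\,d\xi\cdot(1+O(1/r))$ with $\tilde g=\psi\cdot h\cdot e^H$ and $h(\alpha)=e^{\sum_i\epsilon_i\sqrt{-1}(\alpha_i+\beta_i+2\pi/r)}$.

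Next, I would deform the real contour $\mathrm{D_H}$ into the complex domain so that it passes through the critical point $z^\epsilon$ of $\mathcal W^\epsilon$ identified in Proposition \ref{crit}. For $\theta_i$'s sufficiently small, $l_i$'s are small and $z^\epsilon$ lies inside $\mathrm{D_{\delta_0,\mathbb C}}$, so the imaginary shift $\alpha_i\mapsto\alpha_i+\epsilon_i\mu_i\sqrt{-1}l_i$ (and the corresponding adjustment in $\xi_s$) stays in the region where $\mathcal W^\epsilon$ is holomorphic. On the inner region $\mathrm{D_H^\delta}$ where $\psi\equiv1$ the deformation is exact by Cauchy's theorem; on the transition region $\mathrm{D_H}\setminus\mathrm{D_H^\delta}$ the strict concavity of $\mathrm{Im}\mathcal W^\epsilon$ in the real directions (Proposition \ref{convexity}) forces $\mathrm{Re}(f^\epsilon)=\mathrm{Im}\mathcal W^\epsilon/(4\pi)$ to drop strictly below its value at $z^\epsilon$ away from the critical point, giving only an exponentially suppressed contribution. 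The hypotheses of Proposition \ref{saddle} are then verified: the critical point and value come from Proposition \ref{crit}, the non-singular Hessian from Proposition \ref{nonsingular}, the dominance of $\mathrm{Re}f^\epsilon(z^\epsilon)$ on the deformed contour from Proposition \ref{convexity}, non-vanishing of $\tilde g(z^\epsilon)$ from continuity in $\beta$, and the $O(1/r^2)$-remainder bound from Lemma \ref{converge}.

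Applying Proposition \ref{saddle} and using $\mathcal W^\epsilon(z^\epsilon)=2|T|\pi^2+2\sqrt{-1}\mathrm{Vol}(M_{E_\theta})$ from Proposition \ref{crit} produces $e^{rf^\epsilon(z^\epsilon)} = e^{-r|T|\pi\sqrt{-1}/2}\,e^{r\mathrm{Vol}(M_{E_\theta})/(2\pi)}$, giving the desired exponential growth. Combining the prefactor $r^n/(2^{2|E|+|T|}\pi^n)$ with the saddle-point factor $(2\pi/r)^{n/2}$ and the $(r/2)^{-|T|}$ from the decomposition yields the overall $r$-dependence $r^{(|E|-|T|)/2}/(2^{(3|E|+|T|)/2}\pi^{(|E|+|T|)/2})$. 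Continuity of $C^\epsilon$ in $\beta$ is immediate from continuity of $z^\epsilon(\beta)$, $\tilde g(z^\epsilon)$, and the Hessian determinant. At $\beta=\pi$ one has $l_i=0$ and $z^\epsilon=(\pi,\dots,\pi,\tfrac{7\pi}{4},\dots,\tfrac{7\pi}{4})$, at which point $h(z^\epsilon)$ reduces to roots of unity that combine with the phase $e^{-r|T|\pi\sqrt{-1}/2}$ and the explicit $H(z^\epsilon)$ (evaluated using $\eta_j-\tau_i=\pi/2$, $\tau_i-\pi=\pi/2$, $\xi-\tau_i=\pi/4$, $\eta_j-\xi=\pi/4$) to produce the sign $(-1)^{|T|}$.

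The main obstacle will be the careful bookkeeping of the constant $C^\epsilon(z^\epsilon)$ at $\beta=\pi$: tracking all the phases (from $(r/2)^{-|T|}$, from the first-order corrections encoded in $H$, from the imaginary part of the critical value, and from evaluating $h$ at the complex critical point) and verifying that the oddness of $r$ assumed throughout makes them conspire into the stated clean factor $(-1)^{|T|}$. A secondary technical issue is to make the contour deformation rigorous in the presence of the non-holomorphic bump function $\psi$, which is handled by combining exact deformation inside $\mathrm{D_H^\delta}$ with the exponential suppression from the strict concavity on the transition region.
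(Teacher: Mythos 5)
Your overall strategy — reduce the Fourier coefficient $\widehat{f_r^\epsilon}(\mathbf 0,\mathbf 0)$ to a saddle-point integral, peel off the $O(1/r)$ discrepancy between $\mathcal W_r^\epsilon$ and $\mathcal W^\epsilon$ as an explicit prefactor $(r/2)^{-|T|}e^{\kappa}$, deform the contour through $z^\epsilon$, invoke Proposition \ref{saddle}, and track constants — is the same as the paper's. There is, however, a concrete gap in how you verify hypothesis (2) of Proposition \ref{saddle}, namely that $\mathrm{Re}\,f^\epsilon(z^\epsilon)$ strictly dominates on the whole integration surface. You attribute the exponential suppression on $\mathrm{D_H}\setminus\mathrm{D_H^\delta}$ to the strict concavity of Proposition \ref{convexity}, but that concavity is only established on the small $L^1$-ball $\mathrm{D_{\delta_0,\mathbb C}}$ around $\bigl(\pi,\dots,\pi,\frac{7\pi}{4},\dots,\frac{7\pi}{4}\bigr)$ and tells you nothing about the region near $\partial\mathrm{D_H}$. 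What you actually need there is Lemma \ref{absm} (built on Costantino's concavity result and the Murakami--Yano / Ushijima volume formula, giving $\mathrm{Im}\,\mathcal W^\epsilon\leqslant 2|T|v_8$ with equality only at the central point) together with compactness of $\mathrm{D_H}\setminus\mathrm{D_{\delta_0}}$; this is a genuinely different ingredient from Proposition \ref{convexity}.

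A related issue: you propose deforming the contour on all of $\mathrm{D_H^\delta}$ (where $\psi\equiv 1$), but then you cannot use concavity to conclude that $z^\epsilon$ is the unique maximum on the deformed surface, since the deformed surface extends far outside $\mathrm{D_{\delta_0,\mathbb C}}$. The paper avoids this by first using Lemma \ref{absm} to reduce to $\mathrm{D_{\delta_0}}$, and then only deforms $\mathrm{D_{\delta_0}}$ to a surface $S^\epsilon=S^\epsilon_{\text{top}}\cup S^\epsilon_{\text{side}}$. On $S^\epsilon_{\text{top}}$ the real-direction concavity gives the unique maximum at $z^\epsilon$; on $S^\epsilon_{\text{side}}$ one uses the fact that $\mathrm{Im}\,\mathcal W^\epsilon$ is concave \emph{up} in the imaginary directions, so along each vertical segment the maximum is attained at an endpoint, which is then controlled by the earlier boundary estimate. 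Your sketch omits this side-surface argument, and without it (or a substitute) hypothesis (2) of Proposition \ref{saddle} is not actually verified. The rest of your plan — the decomposition of $\mathcal W_r^\epsilon$ via Lemma \ref{converge}, the $r$-power bookkeeping giving $r^{(|E|-|T|)/2}/\bigl(2^{(3|E|+|T|)/2}\pi^{(|E|+|T|)/2}\bigr)$, and continuity of $C^\epsilon$ in $\beta$ — matches the paper.
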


 For the proof of Proposition \ref{critical}, we need the following

\begin{lemma}\label{absm} For each $\epsilon\in\{1,-1\}^E,$ 
$$\max_{\mathrm{D_H}}\mathrm{Im}\mathcal W^{\epsilon} \leqslant \mathrm{Im}\mathcal W^{\epsilon}\Big(\pi,\dots,\pi,\frac{7\pi}{4},\dots,\frac{7\pi}{4}\Big)=2|T|v_8$$
where $v_8$ is the volume of the regular ideal octahedron, and the equality holds if and only if $\alpha_1=\dots=\alpha_{|E|}=\pi$ and $\xi_1=\dots=\xi_{|T|}=\frac{7\pi}{4}.$
\end{lemma}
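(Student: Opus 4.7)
The plan is to reduce to a per-tetrahedron bound. On the real slice $\mathrm{D_H}\subset\mathbb{R}^{|E|+|T|}$ every $\alpha_i$ and $\beta_i$ is real, so the cross-term $-\sum_{i=1}^{|E|}2\epsilon_i(\alpha_i-\pi)(\beta_i-\pi)$ in $\mathcal W^{\epsilon}$ is real and contributes nothing to $\mathrm{Im}\mathcal W^{\epsilon}$; in particular $\mathrm{Im}\mathcal W^{\epsilon}$ is independent of $\epsilon$ on $\mathrm{D_H}$. Applying (\ref{dilogLob}) termwise to the dilogarithm summands appearing in the definition (\ref{term}) of $U$ yields $\mathrm{Im}\,U(\alpha_1,\dots,\alpha_6,\xi)=2V(\alpha_1,\dots,\alpha_6,\xi)$ with $V$ the function defined in (\ref{V}). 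Consequently
$$\mathrm{Im}\mathcal W^{\epsilon}(\alpha,\xi)=2\sum_{s=1}^{|T|}V(\alpha_{s_1},\dots,\alpha_{s_6},\xi_s),$$
and the lemma reduces to the per-tetrahedron statement $V(\alpha_1,\dots,\alpha_6,\xi)\leqslant v_8$ on the single-tetrahedron domain $\mathrm{B_H}$, with equality if and only if $\alpha_1=\dots=\alpha_6=\pi$ and $\xi=7\pi/4$.

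Next I would verify the value at the proposed maximizer. Using that $\Lambda$ is $\pi$-periodic, odd, and $\Lambda(\pi/2)=0$, each $\delta(\pi,\pi,\pi)=-\tfrac{3}{2}\Lambda(\pi/2)+\tfrac{1}{2}\Lambda(3\pi/2)=0$. At $\xi=7\pi/4$, with $\tau_i=3\pi/2$ and $\eta_j=2\pi$, the remaining Lobachevsky contributions in (\ref{V}) become $-\Lambda(7\pi/4)+4\Lambda(\pi/4)+3\Lambda(\pi/4)=8\Lambda(\pi/4)=v_8$, matching the value of $v_8$ as the volume of the regular ideal octahedron.

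For the per-tetrahedron maximality I would proceed in two steps. First, for fixed $\alpha$, optimize $V$ in $\xi$: the equation $\partial V/\partial\xi=0$ amounts to a low-degree polynomial equation in $e^{\sqrt{-1}\xi}$ (a quadratic, as noted after (\ref{xia})), with a unique admissible root $\xi^*(\alpha)$, and a direct computation shows $\partial^2 V/\partial\xi^2=\cot\xi-\sum_i\cot(\xi-\tau_i)-\sum_j\cot(\eta_j-\xi)=-8<0$ at the symmetric point, so $\xi^*(\alpha)$ is a strict maximum of $V(\alpha,\cdot)$. Second, by Theorem \ref{co-vol} the reduced function $V(\alpha,\xi^*(\alpha))$ equals (on the hyperideal locus) the hyperbolic volume of the truncated hyperideal tetrahedron with dihedral angles $|\pi-\alpha_i|$, extended continuously to the degenerate boundary $\alpha_i\to\pi$. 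The required bound $V(\alpha,\xi^*(\alpha))\leqslant v_8$, with equality only in the totally degenerate limit $\alpha_i=\pi$ where the truncated hyperideal tetrahedron opens up to the regular ideal octahedron, is a maximum-volume theorem for hyperbolic polyhedra of fixed combinatorial type; this plays the role of the Belletti-type global volume bound and is used as a black box.

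The principal obstacle is controlling $V(\alpha,\xi^*(\alpha))$ at and near the degenerate configuration $\alpha=(\pi,\dots,\pi)$, where the parametrization in Theorem \ref{co-vol} collapses and a direct geometric reading breaks down. My plan to handle this is to exploit the $\mathfrak{S}_3$-symmetry permuting the three pairs of opposite edges of the tetrahedron together with the strict concavity of $\mathrm{Im}\mathcal W^{\epsilon}$ in the real variables near the symmetric point from Proposition \ref{convexity}, which pins down the symmetric configuration as the unique local maximum. The global claim then follows from a boundary analysis on $\partial\mathrm{B_H}$ (where either some admissibility inequality saturates or $\xi$ reaches $\max\{\tau_i\}$ or $\min\{\eta_j,2\pi\}$) showing $V<v_8$ strictly, combined with the uniqueness of $\xi^*(\alpha)$ from the quadratic equation above.
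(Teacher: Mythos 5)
Your overall approach matches the paper's: reduce to the per-tetrahedron bound $V\leqslant v_8$ on $\mathrm{B_H}$ (noting that on $\mathrm{D_H}$ the $\epsilon$-dependent cross-terms are real), verify the value at $(\pi,\dots,\pi,\tfrac{7\pi}{4})$, then combine the volume interpretation of $V(\alpha,\xi(\alpha))$ with the maximality of the regular ideal octahedron. Two points in the middle need repair. First, the concavity-in-$\xi$ step: computing $\partial^2V/\partial\xi^2=-8$ at the single point $(\pi,\dots,\pi,\tfrac{7\pi}{4})$ does not show that $\xi\mapsto V(\alpha,\xi)$ is concave for each fixed $\alpha$ of hyperideal type, which is what you use to conclude $\xi^*(\alpha)$ is a global maximum in $\xi$; the paper instead cites Costantino's result that $V(\alpha,\cdot)$ is strictly concave on its admissibility interval for every such $\alpha$, which is the correct input (the quadratic equation for $e^{-2\sqrt{-1}\xi}$ only gives that there are at most two critical points, not that the admissible one is a max). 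Second, the identity $V(\alpha,\xi^*(\alpha))=\mathrm{Vol}(\Delta_{|\pi-\alpha|})$ should be attributed to the Murakami--Yano/Ushijima formula for hyperideal tetrahedra, not to Theorem \ref{co-vol}: the latter concerns the complex function $W$ evaluated at $\pi\pm\sqrt{-1}l_j$ and yields the co-volume, not the real imaginary-part formula used here.

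Finally, the ``principal obstacle'' you flag at $\alpha=(\pi,\dots,\pi)$ is not actually an obstacle: this is an interior point of $\mathrm{B_H}$ (all the hyperideal-type inequalities are strict there), $V$ and $\xi(\alpha)$ are continuous across it, and the Ushijima formula extends continuously to the ideal limit where the truncated tetrahedron becomes the regular ideal octahedron; the paper simply uses this continuity together with the maximality of the octahedron. Your proposed workaround would not close the gap anyway: Proposition \ref{convexity} establishes concavity only on the small neighborhood $\mathrm{D_{\delta_0,\mathbb C}}$, so combining it with a boundary analysis on $\partial\mathrm{B_H}$ would still leave open the possibility of local maxima of $V$ elsewhere in the interior of $\mathrm{B_H}\setminus\mathrm{D_{\delta_0}}$; that is exactly what the global volume-maximality theorem (which you already invoke as a black box) is needed for.
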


\begin{proof} On $\mathrm{D_H},$ we have 
$$\mathrm{Im}\mathcal W^{\epsilon}(\boldsymbol \alpha,\xi_1)=\sum_{s=1}^{|T|}2V(\alpha_{s_1},\dots,\alpha_{s_6},\xi_s)$$ 
for $V$ defined in (\ref{V}). Then the result is a consequence of the result of Costantino\,\cite{C} and the Murakami-Yano formula\,\cite{MY} (see Ushijima\,\cite{U} for the case of hyperideal tetrahedra). Indeed, by \cite{C}, for a fixed $\boldsymbol \alpha=(\alpha_1,\dots,\alpha_6)$ of the hyperideal type, the function $f(\xi)$ defined by $f(\xi)=V(\boldsymbol \alpha,\xi)$ is strictly concave down and the unique maximum point $\xi(\boldsymbol \alpha)$ exists and lies in $(\max\{\tau_i\},\min\{\eta_j,2\pi\}),$ ie, $(\boldsymbol\alpha,\xi(\boldsymbol\alpha))\in\mathrm{B_H}.$ Then by \cite{U}, $V(\boldsymbol\alpha,\xi(\boldsymbol\alpha))=\mathrm{Vol}(\Delta_{|\boldsymbol\pi-\boldsymbol\alpha|}),$ the volume of the hyperideal tetrahedron $\Delta_{|\boldsymbol\pi-\boldsymbol\alpha|}$ with dihedral angles $|\pi-\alpha_1|,\dots, |\pi-\alpha_6|.$ Since $\xi(\pi,\dots,\pi)=\frac{7\pi}{4}$ and the regular ideal octahedron $\Delta_{(0,\dots,0)}$ has the maximum volume among all the hyperideal tetrahedra, $V\big(\pi,\dots,\pi,\frac{7\pi}{4}\big)=\mathrm{Vol}(\Delta_{(0,\dots,0)})\geqslant \mathrm{Vol}(\Delta_{|\boldsymbol\pi-\boldsymbol\alpha|})=V(\boldsymbol\alpha,\xi(\boldsymbol\alpha))\geqslant V(\boldsymbol\alpha,\xi)$ for any $(\boldsymbol\alpha,\xi)\in \mathrm{B_H}.$ 

For the equality part, suppose $(\alpha_1,\dots,\alpha_{|E|},\xi_1,\dots,\xi_{|T|})\neq \big(\pi,\dots,\pi,\frac{7\pi}{4},\frac{7\pi}{4}\big).$ If $(\alpha_1,\dots,\alpha_6)\neq (\pi,\dots,\pi),$ then 
$\mathrm{Im}\mathcal W^{\epsilon}(\boldsymbol\alpha,\xi)\leqslant \sum_{s=1}^{|T|}\mathrm{Vol}(\Delta_s)<2|T|v_8,$ where $\Delta_s$ is the truncated hyperideal tetrahedron with dihedral angles $|\pi-\alpha_{s_1}|,\dots,|\pi-\alpha_{s_6}|.$ If $(\alpha_{s_1},\dots,\alpha_{s_6})=(\pi,\dots,\pi)$ for all $s\in\{1,\dots, |T|\}$ but, say, $\xi_1\neq \frac{7\pi}{4},$ then the strict concavity of $f(\xi)$ implies that 
$$\mathrm{Im}\mathcal W^{\epsilon}(\pi,\dots,\pi,\xi_1,\dots,\xi_{|T|})< \mathrm{Im}\mathcal W^{\epsilon}\big(\pi,\dots,\pi,\frac{7\pi}{4},\dots,\frac{7\pi}{4}\big).$$
\end{proof}

\begin{proof}[Proof of Proposition \ref{critical}] Let $\delta_0>0$ be as in Proposition \ref{convexity}.
By Lemma \ref{convexity}, Proposition \ref{absm} and the compactness of $\mathrm{D_H}\setminus\mathrm{D_{\delta_0}},$
$$2|T|v_8>\max_{\mathrm{D_H}\setminus\mathrm{D_{\delta_0}}} \mathrm{Im}\mathcal W^{\epsilon}.$$
By Proposition \ref{crit} and continuity, if $\{\beta_1,\dots,\beta_{|E|}\}$  are sufficiently close to $\pi,$ then the critical point $z^{\epsilon}$ of $\mathcal W^{\epsilon}$  as in Proposition \ref{crit} lies in $\mathrm{D_{\delta_0,\mathbb C}},$ and $\mathrm{Im}\mathcal W^{\epsilon}(\boldsymbol z^{\epsilon})=\mathrm{Vol}(M_{E_{\boldsymbol\theta}})$ is sufficiently close to $2|T|v_8$ so that
 $$\mathrm{Im}\mathcal W^{\epsilon}(\boldsymbol z^{\epsilon})>\max_{\mathrm{D_H}\setminus\mathrm{D_{\delta_0}}} \mathrm{Im}\mathcal W^{\epsilon}.$$
 
Therefore, we only need to estimate  the integral on $\mathrm{D_{\delta_0}}.$ To do this, we consider as drawn in Figure \ref{surface} the surface $S^{\epsilon}=S^{\epsilon}_{\text{top}}\cup S^{\epsilon}_{\text{side}}$ in $\overline{\mathrm{D_{\delta_0,\mathbb C}}},$ where
$$S^{\epsilon}_{\text{top}}=\{ (\boldsymbol\alpha,\boldsymbol\xi)\in \mathrm{D_{\delta_0,\mathbb C}}\ |\ ((\mathrm{Im}(\boldsymbol\alpha)),\mathrm{Im}(\boldsymbol\xi))=\mathrm{Im}(\boldsymbol z^{\epsilon})\}$$
and
$$S^{\epsilon}_{\text{side}}=\{ (\boldsymbol\alpha,\boldsymbol\xi)+t\sqrt{-1}\cdot \mathrm{Im}(\boldsymbol z^{\epsilon})\ |\ (\boldsymbol\alpha,\boldsymbol\xi)\in\partial \mathrm{D_{\delta_0}},t\in[0,1]\}.$$

\begin{figure}[htbp]
\centering
\includegraphics[scale=0.3]{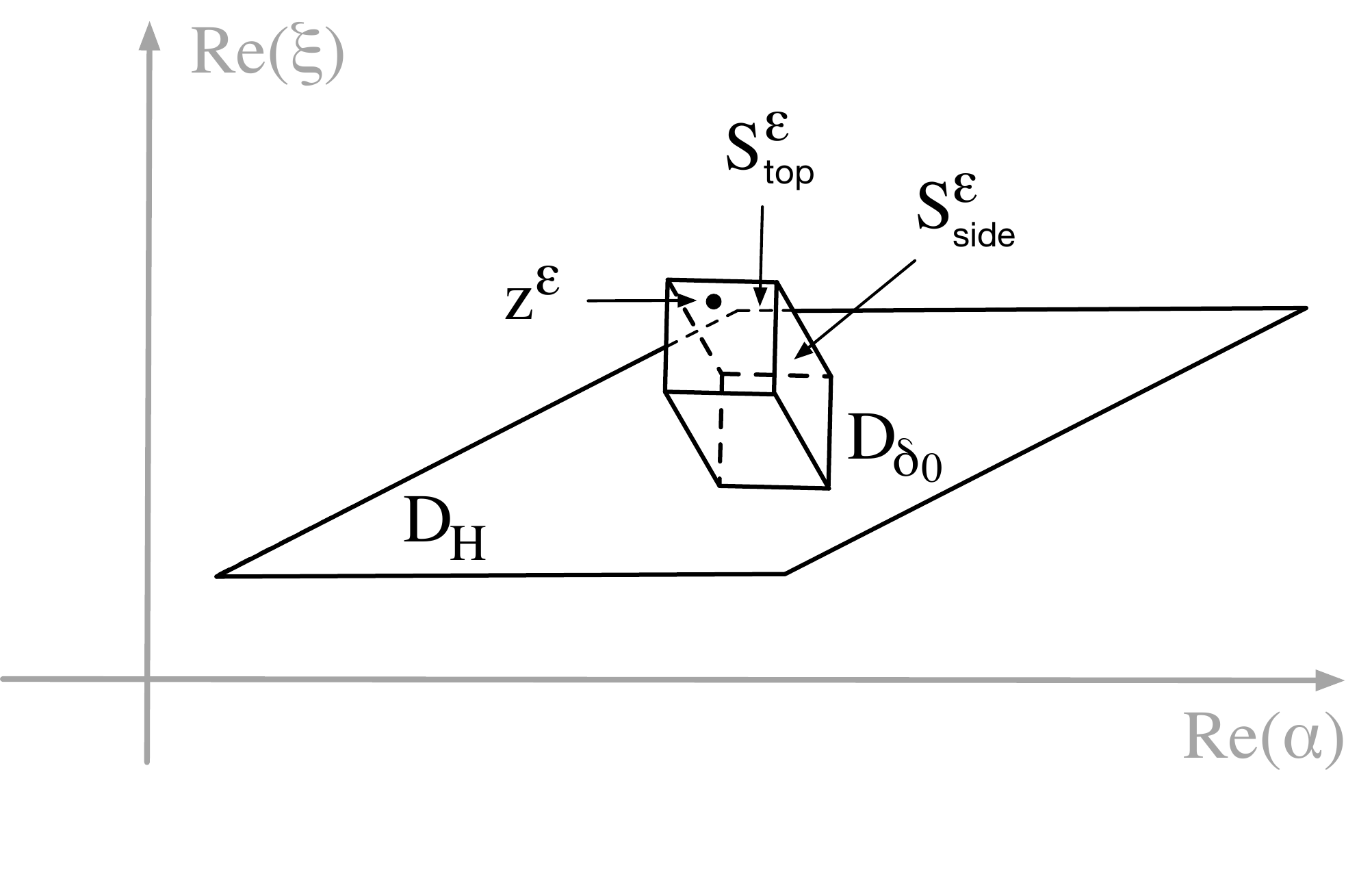}
\caption{The deformed surface $S^{\epsilon}$}
\label{surface} 
\end{figure}

By analyticity, the integral remains the same if we deform the domain from $\mathrm{D_{\delta_0}}$ to $S^{\epsilon}.$

By Proposition \ref{convexity}, $\mathrm{Im}\mathcal W^{\epsilon}$  is concave down on $S^{\epsilon}_{\text{top}}.$ Since $\boldsymbol z^{\epsilon}$ is the critical points of $\mathrm{Im}\mathcal W^{\epsilon},$ it is the only absolute maximum on $S^{\epsilon}_{\text{top}}.$

On the side $S^{\epsilon}_{\text{side}},$ for each $(\boldsymbol\alpha,\boldsymbol\xi)\in \partial \mathrm{D_{\delta_0}},$ consider the function 
$$g^{\epsilon}_{(\boldsymbol\alpha,\boldsymbol\xi)}(t)= \mathrm{Im}\mathcal W^{\epsilon}((\boldsymbol\alpha,\boldsymbol\xi)+t\sqrt{-1}\cdot \mathrm{Im}(\boldsymbol z^{\epsilon}))$$
on $[0,1].$ By Lemma \ref{convexity}, $g^{\epsilon}_{(\boldsymbol\alpha,\boldsymbol\xi)}(t)$ is concave up for any $(\boldsymbol\alpha,\boldsymbol\xi)\in \partial \mathrm{D_{\delta_0}}.$ As a consequence, $g^{\epsilon}_{(\boldsymbol\alpha,\boldsymbol\xi)}(t)\leqslant \max\{g^{\epsilon}_{(\boldsymbol\alpha,\boldsymbol\xi)}(0), g^{\epsilon}_{(\boldsymbol\alpha,\boldsymbol\xi)}(1)\}.$ Now by the previous two steps, since $(\boldsymbol\alpha,\boldsymbol\xi)\in \partial \mathrm{D_{\delta_0}},$ 
$$g^{\epsilon}_{(\boldsymbol\alpha,\boldsymbol\xi)}(0)= \mathrm{Im}\mathcal W^{\epsilon}(\boldsymbol\alpha,\boldsymbol\xi)<\mathrm{Im}\mathcal W^{\epsilon}(\boldsymbol z^{\epsilon});$$
and since $(\boldsymbol\alpha,\boldsymbol\xi)+\sqrt{-1}\cdot \mathrm{Im}(\boldsymbol z^{\epsilon})\in S^{\epsilon}_{\text{top}},$
$$g^{\epsilon}_{(\boldsymbol\alpha,\boldsymbol\xi)}(0)= \mathrm{Im}\mathcal W^{\epsilon}((\boldsymbol\alpha,\boldsymbol\xi)+\sqrt{-1}\cdot \mathrm{Im}(\boldsymbol z^{\epsilon}))<\mathrm{Im}\mathcal W^{\epsilon}(\boldsymbol z^{\epsilon}).$$ As a consequence,
 $$\mathrm{Im}\mathcal W^{\epsilon}(\boldsymbol z^{\epsilon})>\max_{S^{\epsilon}_{\text{side}}} \mathrm{Im}\mathcal W^{\epsilon}.$$

Therefore, we proved that $\boldsymbol z^{\epsilon}$ is the unique maximum point of $\mathrm{Im}\mathcal W^{\epsilon}$ on $S^{\epsilon}\cup\big( \mathrm{D_H}\setminus\mathrm{D_{\delta_0}}\big),$ and $\mathcal W^{\epsilon}$ has critical value $2|T|\pi^2+2\sqrt{-1}\cdot\mathrm{Vol}(M_{E_{\boldsymbol\theta}})$ at $\boldsymbol z^{\epsilon}.$

By Proposition \ref{nonsingular}, $\det\mathrm{Hess}\mathcal W^{\epsilon}(\boldsymbol z^{\epsilon})\neq 0.$


Next, we prove that the domain
$$\big\{(\boldsymbol\alpha,\boldsymbol \xi)\in \overline{\mathrm D_{\delta_0,\mathbb C}}\ \big|\ \mathrm{Im}\mathcal W^{\epsilon}(\boldsymbol\alpha ,\boldsymbol \xi)<\mathrm{Im}\mathcal W^{\epsilon }(\mathbf z^{\epsilon })\big\} $$
deformation retracts to $S_{\text{top}}^{\epsilon }\setminus\{\mathbf z^{\epsilon }\}.$ To see this, for each $\mathbf x \in \mathrm D_{\delta_0},$ let 
$$P_{\mathbf x}=\big\{(\boldsymbol\alpha ,\boldsymbol\xi)\in \mathrm D_{\delta_0,\mathbb C}\ \big|\ \mathrm{Re}(\boldsymbol\alpha ,\boldsymbol \xi)=\mathbf x\big\}$$
and 
$$B_{\mathbf x}=\big\{(\boldsymbol\alpha ,\boldsymbol\xi)\in P_{\mathbf x}\ \big|\ \mathrm{Im}\mathcal W^{\epsilon }(\boldsymbol\alpha ,\boldsymbol \xi)<\mathrm{Im}\mathcal W^{\epsilon }(\mathbf z^{\epsilon })\big\}.$$ Then by Proposition \ref{convexity} that $\mathrm{Im}\mathcal W^{\epsilon }$ is concave up in $\mathrm{Im}(\boldsymbol\alpha ,\boldsymbol \xi),$  $B_{\mathrm{Re}(\mathbf z^{\epsilon })}=\emptyset,$ and $B_{\mathbf x}$ is a non-empty convex subset of $P_{\mathbf x}$ for $\mathbf x\neq \mathrm{Re}(\mathbf z^{\epsilon });$  and by the fact that $\mathbf z^{\epsilon }$ is the unique maximum point of $\mathrm{Im}\mathcal W^{\epsilon }$ on $S^{\epsilon },$ $\mathbf x+\sqrt{-1}\mathrm{Im}(\mathbf z^{\epsilon })\in B_{\mathbf z}$ for $\mathbf x \neq \mathrm{Re}(\mathbf z^{\epsilon }).$ 
 As a consequence, $B_{\mathbf x}$ deformation retracts to $\mathbf x+\sqrt{-1}\mathrm{Im}(\mathbf z^{\epsilon })$ which induces the desired deformation retraction of $\big\{(\boldsymbol\alpha ,\boldsymbol \xi)\in \overline{\mathrm D_{\delta_0,\mathbb C}}\ \big|\ \mathrm{Im}\mathcal W^{\epsilon }(\boldsymbol\alpha ,\boldsymbol \xi)<\mathrm{Im}\mathcal W^{\epsilon }(\mathbf z^{\epsilon })\big\} $ to $S_{\text{top}}^{\epsilon }\setminus\{\mathbf z^{\epsilon }\}.$


Finally,  we estimate the difference between $\mathcal W^{\epsilon}_r$ and $\mathcal W^{\epsilon}.$ By Lemma \ref{converge}, (3), we have
 $$\varphi_r\Big(\frac{\pi}{r}\Big)=\mathrm{Li}_2(1)+\frac{2\pi\sqrt{-1}}{r}\log\Big(\frac{r}{2}\Big)-\frac{\pi^2}{r}+O\Big(\frac{1}{r^2}\Big);$$
and for $z$ with $0<\mathrm{Re z}<\pi$ have
 $$\varphi_r\Big(z+\frac{k\pi}{r}\Big)=\varphi_r(z)+\varphi'_r(z)\cdot\frac{k\pi}{r}+O\Big(\frac{1}{r^2}\Big).$$
 Then by Lemma \ref{converge}, in 
 $$\big\{(\boldsymbol\alpha,\boldsymbol\xi)\in \overline{\mathrm{D_{H,\mathbb C}^\delta}}\ \big|\ |\mathrm{Im}(\alpha_i)| < L\text{ for } i=\{1,\dots,|E|\}, |\mathrm{Im}(\xi_s)| < L\text{ for } s=\{1,\dots,|T|\} \}$$
  for some $L>0,$
 $$\mathcal W^{\epsilon}_r(\boldsymbol\alpha,\boldsymbol\xi)=\mathcal W^{\epsilon}(\boldsymbol\alpha,\boldsymbol\xi)-\frac{4|T|\pi\sqrt{-1}}{r}\log\Big(\frac{r}{2}\Big)+\frac{4\pi \sqrt{-1} \cdot\kappa(\boldsymbol\alpha,\boldsymbol\xi)}{r}+\frac{\nu_r(\boldsymbol\alpha,\boldsymbol\xi)}{r^2},$$
with
 \begin{equation*}
 \begin{split}
&\kappa(\boldsymbol\alpha,\boldsymbol\xi)\\
=&\sum_{s=1}^{|T|}\Big(\frac{1}{2}\sum_{i=1}^4 \sqrt{-1}\tau_{s_i}- \sqrt{-1}\xi_s-\sqrt{-1}\pi-\frac{\sqrt{-1}\pi}{2}\\
&+\frac{1}{4}\sum_{i=1}^4\sum_{j=1}^3\log\big(1-e^{2\sqrt{-1}(\eta_{s_j}-\tau_{s_i})}\big)-\frac{3}{4}\sum_{i=1}^4\log\big(1-e^{2\sqrt{-1}(\tau_{s_i}-\pi)}\big)\\
&+\frac{3}{2}\log\big(1-e^{2\sqrt{-1}(\xi_s-\pi)}\big)-\frac{1}{2}\sum_{i=1}^4\log\big(1-e^{2\sqrt{-1}(\xi_s-\tau_{s_i})}\big)-\frac{1}{2}\sum_{j=1}^3\log\big(1-e^{2\sqrt{-1}(\eta_{s_j}-\xi_s)}\big)\Big)
 \end{split}
 \end{equation*}
 and  $|\nu_r(\boldsymbol\alpha,\boldsymbol\xi)|$ bounded from above by a constant independent of $r.$
 Then
  \begin{equation*}
 \begin{split}
&e^{\sum_{i=1}^{|E|}\epsilon_i\sqrt{-1}\big(\alpha_i+\beta_i+\frac{2\pi}{r}\big)+\frac{r}{4\pi \sqrt{-1}}{\mathcal W}^{\epsilon}_r(\boldsymbol\alpha,\boldsymbol\xi)}\\
=&\Big(\frac{r}{2}\Big)^{-|T|}e^{\sum_{i=1}^{|E|}\epsilon_i\sqrt{-1}(\alpha_i+\beta_i)+\kappa(\boldsymbol\alpha,\boldsymbol\xi)}\cdot e^{\frac{r}{4\pi \sqrt{-1}}\Big(\mathcal W^{\epsilon}(\boldsymbol\alpha,\boldsymbol\xi)+\frac{\nu_r(\alpha,\xi)-\sum_{i=1}^{|E|}\epsilon_i 8\pi^2}{r^2}\Big)}.
 \end{split}
 \end{equation*}

Now let $D_{\mathbf z}=\big\{(\boldsymbol\alpha,\boldsymbol\xi)\in \overline{\mathrm{D_{H,\mathbb C}^\delta}}\ \big|\ |\mathrm{Im}(\alpha_i)| < L\text{ for } i=\{1,\dots,|E|\}, |\mathrm{Im}(\xi_s)| < L\text{ for } s=\{1,\dots,|T|\} \}$ for some $L>0.$ Let $\mathbf a_r=((\beta_1,\dots,\beta_{|E|})$ (recall that $\beta_i=\frac{2\pi b^{(r)}_i}{r}$ depends on $r$),
$f^{\mathbf a_r}(\boldsymbol\alpha,\boldsymbol\xi)=\frac{\mathcal W^{\epsilon}(\boldsymbol\alpha,\boldsymbol\xi)}{4\pi\sqrt{-1}},$ $g^{\mathbf a_r}(\boldsymbol\alpha,\boldsymbol\xi)=\psi(\boldsymbol\alpha,\boldsymbol\xi)e^{\sum_{i=1}^{|E|}\epsilon_i\sqrt{-1}(\alpha_i+\beta_i)+\kappa(\boldsymbol\alpha,\boldsymbol\xi)},$ $f_r^{\mathbf a_r}(\boldsymbol\alpha,\boldsymbol\xi)=\frac{{\mathcal W}_r^{\epsilon}(\boldsymbol\alpha,\boldsymbol\xi)}{4\pi\sqrt{-1}}-\frac{|T|}{r}\log\big(\frac{r}{2}\big),$ $\upsilon_r(\boldsymbol\alpha,\boldsymbol\xi)=\nu_r(\boldsymbol\alpha,\boldsymbol\xi)-\sum_{i=1}^{|E|}\epsilon_i 8\pi^2,$ $S_r=S^{\epsilon}\cup \big(\mathrm{D_H}\setminus\mathrm{D_{\delta_0}}\big)$ and $\boldsymbol z^{\epsilon}$ is the critical point of $f$ in $D_{\mathbf z}.$  Then all the conditions of Proposition \ref{saddle} are satisfied and the result follows. 

When $\beta_1=\dots=\beta_{|E|}=\pi,$ a direct computation shows that 
\begin{equation*}
\begin{split}
C^{\epsilon}(\boldsymbol z^{\epsilon})=\frac{r^{|E|+|T|}}{2^{2|E|+|T|}\pi^{|E|+|T|}}\Big(\frac{2\pi}{r}\Big)^{
\frac{|E|+|T|}{2}}\Big(\frac{r}{2}\Big)^{-|T|} g\Big(\pi,\dots,\pi,\frac{7\pi}{4},\dots,\frac{7\pi}{4}\Big)=\frac{(-1)^{|T|}r^{\frac{|E|-|T|}{2}}}{2^{\frac{3|E|+|T|}{2}}\pi^{\frac{|E|+|T|}{2}}}.
\end{split}
\end{equation*}
\end{proof}

\begin{corollary}\label{5.8} If $\epsilon>0$ is sufficiently small and all $\{\beta_1,\dots,\beta_{|E|}\}$ are in $\{\pi-\epsilon,\pi+\epsilon
\},$ then 
$$\sum_{\epsilon\in\{1,-1\}^E}\frac{C^{\epsilon}(\boldsymbol z^{\epsilon})}{\sqrt{-\det\mathrm{Hess}\Big(\frac{\mathcal W^{\epsilon}(\boldsymbol z^{\epsilon})}{4\pi\sqrt{-1}}\Big)}}\neq 0.$$
\end{corollary}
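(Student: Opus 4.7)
The plan is to use continuity: since each $C^{\epsilon}(z^{\epsilon})$ depends continuously on $(\beta_1,\dots,\beta_{|E|})$ (as stated in Proposition \ref{critical}), and the Hessian determinant is likewise a continuous function of $(\beta_1,\dots,\beta_{|E|})$, it suffices to verify that the sum in question is nonzero at the single point $\beta_1=\cdots=\beta_{|E|}=\pi$. Nonvanishing on a small neighborhood of this point then follows automatically.

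At $\beta_1=\cdots=\beta_{|E|}=\pi$ we have $\theta_i=2|\beta_i-\pi|=0$ for every $i$. Plugging $\theta_i=0$ into the hyperideal cosine law (\ref{cos1}) yields $l_i=0$ for all $i$, hence the critical-point formula (\ref{alpha}) gives
$$\alpha^*_i \;=\; \pi+\epsilon_i\mu_i\sqrt{-1}\,l_i \;=\; \pi$$
for every $i$ and every choice of sign $\epsilon_i$. Combined with $\xi(\pi,\dots,\pi)=\frac{7\pi}{4}$, this means that all $2^{|E|}$ critical points collapse to the single common point
$$z_0 \;=\; \Big(\pi,\dots,\pi,\tfrac{7\pi}{4},\dots,\tfrac{7\pi}{4}\Big).$$

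The key structural observation is that the $\epsilon$-dependent contribution to $\mathcal W^{\epsilon}(\alpha,\xi)$, namely $-\sum_{i}2\epsilon_i(\alpha_i-\pi)(\beta_i-\pi)$, is linear in each $\alpha_i$ (with $\beta_i$ treated as a fixed parameter). Consequently it contributes zero to every second partial derivative, so $\mathrm{Hess}(\mathcal W^{\epsilon})$ is in fact independent of $\epsilon$ everywhere on $\mathrm{D_{H,\mathbb C}}$. In particular, at the common critical point $z_0$, both $\det\mathrm{Hess}\bigl(\mathcal W^{\epsilon}(z^{\epsilon})/(4\pi\sqrt{-1})\bigr)$ and its square root (chosen consistently) are the same for every $\epsilon$; by Proposition \ref{nonsingular} this determinant is nonzero.

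Using Proposition \ref{critical}, at $\beta_1=\dots=\beta_{|E|}=\pi$ the numerator $C^{\epsilon}(z^{\epsilon})=\frac{(-1)^{|T|}r^{(|E|-|T|)/2}}{2^{(3|E|+|T|)/2}\pi^{(|E|+|T|)/2}}$ is also independent of $\epsilon$, so the sum reduces to $2^{|E|}$ identical nonzero summands:
$$\sum_{\epsilon\in\{1,-1\}^{E}}\frac{C^{\epsilon}(z^{\epsilon})}{\sqrt{-\det\mathrm{Hess}\bigl(\mathcal W^{\epsilon}(z^{\epsilon})/(4\pi\sqrt{-1})\bigr)}} \;=\; 2^{|E|}\cdot\frac{(-1)^{|T|}r^{(|E|-|T|)/2}}{2^{(3|E|+|T|)/2}\pi^{(|E|+|T|)/2}\sqrt{-\det\mathrm{Hess}\bigl(\mathcal W(z_0)/(4\pi\sqrt{-1})\bigr)}} \;\neq\; 0.$$
By continuity, this nonvanishing persists for all $(\beta_1,\dots,\beta_{|E|})$ in a small enough neighborhood of $(\pi,\dots,\pi)$, which is exactly the conclusion. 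The only subtlety worth watching is the consistency of the square-root branch choice across different $\epsilon$, but since the Hessian matrix is literally the same for every $\epsilon$ (at $z_0$, and more generally at any common point), there is no ambiguity: a single choice of branch trivially applies uniformly to all terms.
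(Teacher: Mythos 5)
Your proof is correct and takes essentially the same approach as the paper: reduce by continuity to the symmetric point $\beta_1=\dots=\beta_{|E|}=\pi$, observe that there all critical points coincide with $\big(\pi,\dots,\pi,\tfrac{7\pi}{4},\dots,\tfrac{7\pi}{4}\big)$, that all the $C^{\epsilon}(z^{\epsilon})$'s and Hessian determinants agree, and hence the sum is $2^{|E|}$ copies of a single nonzero term. Your additional observation that $\mathrm{Hess}(\mathcal W^{\epsilon})$ is $\epsilon$-independent everywhere (because the $\epsilon$-dependent term is affine in $\alpha$) is a slight refinement of the paper's weaker statement that the $\mathcal W^{\epsilon}$ coincide as functions at $\beta_i=\pi$, and your remark on the consistency of the square-root branch is a useful point the paper leaves implicit, but neither changes the structure of the argument.
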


\begin{proof} If $\beta_i=\dots=\beta_{|E|}=\pi,$ then all $z^{\epsilon}=\big(\pi,\dots,\pi,\frac{7\pi}{4},\dots,\frac{7\pi}{4}\big)$ and all $\mathcal W^{\epsilon}$ are the same functions. As a consequence, all the $C^{\epsilon}(\boldsymbol z^{\epsilon})$'s and all Hessian determinants $\det\mathrm{Hess}\Big(\frac{\mathcal W^{\epsilon}(\boldsymbol z^{\epsilon})}{4\pi \sqrt{-1}}\Big)$'s are the same at this point, imply that the sum is not equal to zero. Then by continuity, if $\epsilon$ is small enough, then the sum remains nonzero.
\end{proof}

\begin{remark} In \cite{WY3, WY4}, we prove that all $C^{\epsilon}(\boldsymbol z^{\epsilon})$'s and all $\det\mathrm{Hess}\mathcal W^{\epsilon}(\boldsymbol z^{\epsilon})$'s are always the same for any given $\{\beta_1,\dots,\beta_{|E|}\},$ and are closely related to the adjoint twisted Reidemeister torsion of $M_{E_{\boldsymbol\theta}}.$
\end{remark}


\subsection{Estimate of the other Fourier coefficients}\label{ot}

\begin{proposition}\label{other} Suppose $\{\beta_1,\dots,\beta_{|E|}\}$ are in $\{\pi-\epsilon,\pi+\epsilon
\}$ for a sufficiently small $\epsilon>0.$  If $(\mathbf m, \mathbf n)\neq(0,\dots,0),$ then
$$\Big|\widehat{f^{\epsilon}_r}(\mathbf m, \mathbf n)\Big|<O\Big(e^{\frac{r}{2\pi}\big(\mathrm{Vol}(M_{E_\theta})-\epsilon'\big)}\Big)$$
for some $\epsilon'>0.$
\end{proposition}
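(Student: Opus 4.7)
The plan is to adapt the contour-deformation argument of Proposition \ref{critical}, but with an additional imaginary shift of the contour chosen to exploit the linear term $-2\pi\mathbf{m}\cdot\alpha-4\pi\mathbf{n}\cdot\xi$ that now appears in the exponent. Writing
\begin{equation*}
\mathcal{W}^\epsilon_{\mathbf{m},\mathbf{n}}(\alpha,\xi)=\mathcal{W}^\epsilon(\alpha,\xi)-2\pi\,\mathbf{m}\cdot\alpha-4\pi\,\mathbf{n}\cdot\xi,
\end{equation*}
the modulus of the integrand is (up to polynomial factors in $r$ and $O(1/r)$ corrections coming from $\mathcal{W}^\epsilon_r-\mathcal{W}^\epsilon$) equal to $e^{\frac{r}{4\pi}\mathrm{Im}(\mathcal{W}^\epsilon_{\mathbf{m},\mathbf{n}})}$, so the task is to bound $\max\mathrm{Im}(\mathcal{W}^\epsilon_{\mathbf{m},\mathbf{n}})$ on some contour strictly below $2\mathrm{Vol}(M_{E_\theta})$.

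On the complement $\mathrm{D_H}\setminus\mathrm{D_{\delta_0}}$, the same compactness argument as in Proposition \ref{critical} gives $\mathrm{Im}(\mathcal{W}^\epsilon)\le 2\mathrm{Vol}(M_{E_\theta})-\epsilon_0$ for some $\epsilon_0>0$; since the added linear term $-2\pi\mathbf{m}\cdot\alpha-4\pi\mathbf{n}\cdot\xi$ is purely real on the real slice, it contributes nothing to the imaginary part there, and the desired estimate on this region is immediate. Note that the naive bound on the real slice is $\mathrm{Im}(\mathcal{W}^\epsilon)\le 2|T|v_8$ from Lemma \ref{absm}, which exceeds $2\mathrm{Vol}(M_{E_\theta})$ for $\theta>0$, so deformation is essential on the remaining region.

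On $\mathrm{D_{\delta_0}}$, I would first deform to the surface $S^\epsilon$ of Proposition \ref{critical} and then further by the uniform imaginary shift $\sqrt{-1}(\mathbf{t},\mathbf{u})$ with $\mathbf{t}=c\mathbf{m}$ and $\mathbf{u}=2c\mathbf{n}$, for a small $c>0$ depending on $(\mathbf{m},\mathbf{n})$ but not on $r$. Holomorphy of $\mathcal{W}^\epsilon$ on $\mathrm{D_{H,\mathbb C}}$ and smoothness of $\psi$ ensure the integral is unchanged. At the image of $z^\epsilon$, Taylor expansion in the imaginary direction, together with $\nabla\mathcal{W}^\epsilon(z^\epsilon)=0$ (Proposition \ref{crit}) and the fact from Proposition \ref{convexity} that the Hessian of $\mathrm{Im}(\mathcal{W}^\epsilon)$ in the real directions is negative definite (equivalently, positive definite in imaginary directions), yields
\begin{equation*}
\mathrm{Im}\,\mathcal{W}^\epsilon\bigl(z^\epsilon+\sqrt{-1}(\mathbf{t},\mathbf{u})\bigr)\le 2\mathrm{Vol}(M_{E_\theta})+Cc^2(|\mathbf{m}|^2+|\mathbf{n}|^2),
\end{equation*}
while the linear contribution from the shift is $-2\pi c|\mathbf{m}|^2-8\pi c|\mathbf{n}|^2$. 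The remaining fixed term $-2\pi\mathbf{m}\cdot\mathrm{Im}(\alpha^*)-4\pi\mathbf{n}\cdot\mathrm{Im}(\xi^*)$ is $O(\max_i l_i)\cdot(|\mathbf{m}|+|\mathbf{n}|)$ and becomes negligible as $\theta\to 0$. For $c$ sufficiently small this produces a net decrease, giving $\mathrm{Im}(\mathcal{W}^\epsilon_{\mathbf{m},\mathbf{n}})\le 2\mathrm{Vol}(M_{E_\theta})-2\pi\epsilon'$ at the shifted image of $z^\epsilon$ for some $\epsilon'>0$ (depending on $(\mathbf{m},\mathbf{n})$).

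The final step is to propagate this pointwise bound to the whole shifted contour. Since $\mathcal{W}^\epsilon_{\mathbf{m},\mathbf{n}}$ differs from $\mathcal{W}^\epsilon$ by a linear function, the concavity/convexity structure of Proposition \ref{convexity} is preserved; the maximum on the shifted ``top'' is still at a unique interior point, which by a perturbation argument lies close to $z^\epsilon+\sqrt{-1}(\mathbf{t},\mathbf{u})$, and on the ``side'' one bounds by the top and by the boundary exactly as in Proposition \ref{critical}. Multiplying by the polynomial-in-$r$ prefactor $r^{|E|+|T|}$ is absorbed into the asymptotic $O(\cdot)$ and yields the claim. The main obstacle is this last step: ensuring that the interior maximum on the shifted top remains in the regime where the Taylor bound is valid, and that no new maxima appear on the side after shifting. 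This amounts to a perturbation of the uniqueness-of-maximum argument already used in Proposition \ref{critical}, which is technical but routine given the convexity structure in hand.
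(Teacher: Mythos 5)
Your overall framework (contour deformation into the complex domain to make the frequency phase produce exponential decay, plus the compactness bound on $\mathrm{D_H}\setminus\mathrm{D_{\delta_0}}$) matches the paper's, but your choice of deformation and the estimate driving it are genuinely different, and that is where the gaps lie.

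The paper does \emph{not} shift to $S^\epsilon$ first and then perturb proportionally to $(\mathbf m,\mathbf n)$; instead it picks a single coordinate $\alpha_1$ with $m_1\ne 0$ and shifts by $\pm\sqrt{-1}\delta_1$ in that one direction, where $\delta_1$ is fixed. The estimate that closes the argument is \emph{first-order}: because the total derivative $D\mathcal W^\epsilon$ vanishes at $\beta=\pi$, $(\alpha,\xi)=(\pi,\dots,\pi,\tfrac{7\pi}{4},\dots,\tfrac{7\pi}{4})$, continuity gives a uniform bound $|D_{\mathbf u}\mathrm{Im}\mathcal W^\epsilon|<\frac{2\pi-\epsilon''}{2\sqrt{2(|E|+|T|)}}$ on $\mathrm{D_{\delta_1,\mathbb C}}$ for every unit vector $\mathbf u$. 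The Mean Value Theorem then bounds the change of $\mathrm{Im}\mathcal W^\epsilon$ across the whole box by $(2\pi-\epsilon'')\delta_1$, which is strictly beaten by the gain $2\pi|m_1|\delta_1\geqslant 2\pi\delta_1$ from the frequency term, uniformly in $(\mathbf m,\mathbf n)$. Note you never invoke this smallness of the gradient; you use the Hessian bound from Proposition~\ref{convexity} instead. That is a real methodological difference, not just a cosmetic one.

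Your version has several concrete gaps. (i) For Proposition~\ref{convexity} and your Taylor estimate to apply, the shifted contour must stay inside $\mathrm{D_{\delta_0,\mathbb C}}$; with a shift of size $c\cdot\max(|m_i|,2|n_s|)$ this forces $c\lesssim \delta_0/\max(|m_i|,2|n_s|)$, and you then have to verify that the resulting quadratic gain $-2\pi c(|\mathbf m|^2+4|\mathbf n|^2)$ still dominates both the Hessian loss $O(c^2(|\mathbf m|^2+|\mathbf n|^2))$ and the fixed term $-2\pi\mathbf m\cdot\mathrm{Im}(\alpha^*)-4\pi\mathbf n\cdot\mathrm{Im}(\xi^*)$, which has no factor of $c$ and can be of either sign; none of this balancing is actually carried out. (ii) The ``propagate the pointwise bound to the whole shifted contour'' step is exactly where the paper's MVT argument does its work on both $S^\pm_{\text{top}}$ and $S^\pm_{\text{side}}$; you acknowledge this is the main obstacle but defer it, and deferring it is not acceptable since for a shift in a general direction $(c\mathbf m,2c\mathbf n)$ the side estimate no longer reduces to the simple one-parameter convexity bound the paper uses. (iii) You concede that your $\epsilon'$ depends on $(\mathbf m,\mathbf n)$; the paper's proof produces a uniform $\epsilon'=\min\{\epsilon''\delta_1,\epsilon'''\}$, and you would need to show yours is bounded below before using this proposition in the proof of Theorem~\ref{main}. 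In short, the paper's gradient-smallness-plus-MVT-in-one-coordinate argument avoids every one of these issues at once, which is why it is the cleaner route.
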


\begin{proof} Recall that if $\beta_1=\dots=\beta_{|E|}=\pi,$ then the total derivative 
$$D\mathcal W^{\epsilon}\Big(\pi,\dots,\pi,\frac{7\pi}{4},\dots,\frac{7\pi}{4}\Big)=(0,\dots,0).$$
Hence there exists a $\delta_1>0$ and an $\epsilon>0$ such that if $\{\beta_1,\dots,\beta_{|E|}\}$   are in $\{\pi-\epsilon,\pi+\epsilon
\},$ then for all $(\boldsymbol\alpha,\boldsymbol\xi)\in D_{\delta_1,\mathbb C}$ and for any unit vector $\mathbf u=(u_1,\dots,u_{|E|},w_1,\dots,w_{|T|})\in \mathbb R^{|E|+|T|},$ the directional derivatives 
$$|D_{\mathbf u}\mathrm{Im}\mathcal W^{\epsilon}(\boldsymbol\alpha,\boldsymbol\xi)|=\bigg|\sum_{i=1}^{|E|}u_i\frac{\partial \mathrm{Im}\mathcal W^{\epsilon}}{\partial \mathrm{Im}(\alpha_i)}+\sum_{s=1}^{|T|}w_s\frac{\partial \mathrm{Im}\mathcal W^{\epsilon}}{\partial \mathrm{Im}(\xi_s)}\bigg|<\frac{2\pi-\epsilon''}{2\sqrt{2|E|+2|T|}}$$
for some $\epsilon''>0.$

On $\mathrm{D_H},$ we have 
\begin{equation*}
\begin{split}
 &\mathrm{Im}\Big(\mathcal W^{\epsilon}(\boldsymbol\alpha,\boldsymbol\xi)-\sum_{i=1}^{|E|}2\pi m_i\alpha_i-\sum_{s=1}^{|T|}4\pi n_s\xi_s\Big)=\mathrm{Im}\mathcal W^{\epsilon}(\boldsymbol\alpha,\boldsymbol\xi).
\end{split}
\end{equation*}
Then by Lemma \ref{convexity}, Proposition \ref{absm} and the compactness of $\mathrm{D_H}\setminus\mathrm{D_{\delta_1}},$
$$2|T|v_8>\max_{\mathrm{D_H}\setminus\mathrm{D_{\delta_1}}} \mathrm{Im}\Big(\mathcal W^{\epsilon}(\boldsymbol\alpha,\boldsymbol\xi)-\sum_{i=1}^{|E|}2\pi m_i\alpha_i-\sum_{s=1}^{|T|}4\pi n_s\xi_s\Big)+\epsilon'''$$
for some $\epsilon'''>0.$
By Proposition \ref{crit} and continuity, if $\{\beta_1,\dots,\beta_{|E|}\}$   are sufficiently close to $\pi,$ then the critical point $\boldsymbol z^{\epsilon}$ of $\mathcal W^{\epsilon}$  as in Proposition \ref{crit} lies in $\mathrm{D_{\delta_1,\mathbb C}},$ and $\mathrm{Im}\mathcal W^{\epsilon}(\boldsymbol z^{\epsilon})=2\mathrm{Vol}(M_{E_{\boldsymbol\theta}})$ is sufficiently close to $2|T|v_8$ so that
\begin{equation}\label{b}
\mathrm{Im}\mathcal W^{\epsilon}(\boldsymbol z^{\epsilon})>\max_{\mathrm{D_H}\setminus\mathrm{D_{\delta_1}}} \mathrm{Im}\Big(\mathcal W^{\epsilon}(\boldsymbol\alpha,\boldsymbol\xi)-\sum_{i=1}^{|E|}2\pi m_i\alpha_i-\sum_{s=1}^{|T|}4\pi n_s\xi_s\Big)+\epsilon'''.
\end{equation}
 
Therefore, we only need to estimate  the integral on $\mathrm{D_{\delta_1}}.$ 

If $(\mathbf m,\mathbf n)\neq (0,\dots,0),$ then there is at least one element  of $\{m_1,\dots,m_{|E|}\}$ or of $\{n_1,\dots,n_{|T|}\}$  that is nonzero. Without loss of generality, assume that $m_1\neq 0,$ as the calculation for the case that $n_1\neq 0$ can be carried out in exactly the same manner.

If $m_1>0,$ then consider the surface $S^+=S^+_{\text{top}}\cup S^+_{\text{side}}$ in $\overline{\mathrm{D_{\delta_1,\mathbb C}}}$ where
$$S^+_{\text{top}}=\{ (\boldsymbol\alpha,\boldsymbol\xi)\in \mathrm{D_{\delta_1,\mathbb C}}\ |\ (\mathrm{Im}(\boldsymbol\alpha),\mathrm{Im}(\boldsymbol\xi))=(\delta_1,0,\dots,0)\}$$
and
$$S^+_{\text{side}}=\{ (\boldsymbol\alpha,\boldsymbol\xi)+(t\sqrt{-1}\delta_1,0,\dots,0)\ |\ (\boldsymbol\alpha,\boldsymbol\xi)\in\partial \mathrm{D_{\delta_1}}, t\in[0,1]\}.$$      
On the top, for any $(\boldsymbol\alpha,\boldsymbol\xi)\in S^+_{\text{top}},$ by the Mean Value Theorem, 
\begin{equation*}
\begin{split}
\big|\mathrm{Im}\mathcal W^{\epsilon}(\boldsymbol z^{\epsilon})-\mathrm{Im}\mathcal W^{\epsilon}(\boldsymbol\alpha,\boldsymbol\xi)\big|
=&\big|D_{\mathbf u}\mathrm{Im}\mathcal W^{\epsilon}(\boldsymbol z)\big|\cdot\big\|\boldsymbol z^{\epsilon}-(\boldsymbol\alpha,\boldsymbol\xi)\big\|\\
<&\frac{2\pi-\epsilon''}{2\sqrt{2|E|+2|T|}}\cdot2\sqrt{2|E|+2|T|} \delta_1\\
=&2\pi\delta_1-\epsilon''\delta_1,
\end{split}
\end{equation*}
where $\boldsymbol z$ is some point on the line segment connecting $\boldsymbol z^{\epsilon}$ and $(\boldsymbol\alpha,\boldsymbol\xi),$ $\mathbf u=\frac{\boldsymbol z^{\epsilon}-(\boldsymbol\alpha,\boldsymbol\xi)}{\|\boldsymbol z^{\epsilon}-(\boldsymbol\alpha,\boldsymbol\xi)\|}$ and $2\sqrt{2|E|+2|T|} \delta_1$ is the diameter of $\mathrm{D_{\delta_1,\mathbb C}}.$
Then
\begin{equation*}
\begin{split}
\mathrm{Im}\Big(\mathcal W^{\epsilon}(\boldsymbol\alpha,\boldsymbol\xi)-\sum_{i=1}^{|E|}2\pi m_i\alpha_i-\sum_{s=1}^{|T|}4\pi n_s\xi_s\Big)=&\mathrm{Im}\mathcal W^{\epsilon}(\boldsymbol\alpha,\boldsymbol\xi)-2\pi m_1\delta_1\\
<&\mathrm{Im}\mathcal W^{\epsilon}(\boldsymbol z^{\epsilon})+2\pi\delta_1-\epsilon''\delta_1-2\pi\delta_1\\
=&\mathrm{Im}\mathcal W^{\epsilon}(\boldsymbol z^{\epsilon})-\epsilon'' \delta_1.
\end{split}
\end{equation*}

On the side, for any point $(\boldsymbol\alpha,\boldsymbol\xi)+(t\sqrt{-1}\delta_1,0,\dots,0)\in S^+_{\text{side}},$ by the Mean Value Theorem again, we have
$$\big|\mathrm{Im}\mathcal W^{\epsilon}\big((\boldsymbol\alpha,\boldsymbol\xi)+(t\sqrt{-1}\delta_1,0,\dots,0)\big)-\mathrm{Im}\mathcal W^{\epsilon}(\boldsymbol\alpha,\boldsymbol\xi)\big|<\frac{2\pi-\epsilon''}{2\sqrt{2|E|+2|T|}} t\delta_1.$$
Then 
\begin{equation*}
\begin{split}
\mathrm{Im}\mathcal W^{\epsilon}\big((\boldsymbol\alpha,\boldsymbol\xi)+(t\sqrt{-1}\delta_1,0,\dots,0)\big)-2\pi m_1 t\delta_1<&\mathrm{Im}\mathcal W^{\epsilon}(\boldsymbol\alpha,\boldsymbol\xi)+\frac{2\pi-\epsilon''}{2\sqrt{2|E|+2|T|}} t\delta_1-2\pi t\delta_1\\
<&\mathrm{Im}\mathcal W^{\epsilon}(\boldsymbol\alpha,\boldsymbol\xi)\\
<&\mathrm{Im}\mathcal W^{\epsilon}(\boldsymbol z^{\epsilon})-\epsilon''',
\end{split}
\end{equation*}
where the last inequality comes from that $(\boldsymbol\alpha,\boldsymbol\xi)\in \partial \mathrm{D_{\delta_1}}\subset \mathrm{D_H}\setminus\mathrm{D_{\delta_1}}$ and (\ref{b}).

Now let $\epsilon'=\min\{\epsilon''\delta_1,\epsilon'''\},$ then on $S^+\cup \big(\mathrm{D_H}\setminus\mathrm{D_{\delta_1}}\big),$ 
$$\mathrm{Im}\Big(\mathcal W^{\epsilon}(\boldsymbol\alpha,\boldsymbol\xi)-\sum_{i=1}^{|E|}2\pi m_i\alpha_i-\sum_{s=1}^{|T|}4\pi n_s\xi_s\Big)<\mathrm{Im}\mathcal W^{\epsilon}(z^{\epsilon})-\epsilon',$$
and the result follows.

If $m_1<0,$ then we consider the surface $S^-=S^-_{\text{top}}\cup S^-_{\text{side}}$ in $\overline{\mathrm{D_{\delta_1,\mathbb C}}}$ where
$$S^-_{\text{top}}=\{ (\boldsymbol\alpha,\boldsymbol\xi)\in \mathrm{D_{\delta_1,\mathbb C}}\ |\ (\mathrm{Im}(\boldsymbol\alpha),\mathrm{Im}(\boldsymbol\xi))=(-\delta_1,0,\dots,0)\}$$
and
$$S^-_{\text{side}}=\{ (\boldsymbol\alpha,\boldsymbol\xi)-(t\sqrt{-1}\delta_1,0,\dots,0)\ |\ (\boldsymbol\alpha,\boldsymbol\xi)\in\partial \mathrm{D_{\delta_1}}, t\in[0,1]\}.$$      
Then the same estimate as in the previous case proves that on
$S^-\cup \big(\mathrm{D_H}\setminus\mathrm{D_{\delta_1}}\big),$ 
$$\mathrm{Im}\Big(\mathcal W^{\epsilon}(\boldsymbol\alpha,\boldsymbol\xi)-\sum_{i=1}^{|E|}2\pi m_i\alpha_i-\sum_{s=1}^{|T|}4\pi n_s\xi_s\Big)<\mathrm{Im}\mathcal W^{\epsilon}(\boldsymbol z^{\epsilon})-\epsilon',$$
from which the result follows.
\end{proof}


\subsection{Estimate of the error term}\label{ee}

The goal of this section is to estimate the error term in Proposition \ref{Poisson}.

\begin{proposition}\label{error} The error term in Proposition \ref{Poisson} is less than $O\big(e^{\frac{r}{2\pi}(\mathrm{Vol}(M_{E_{\boldsymbol\theta}})-\epsilon')}\big)$
for some $\epsilon'>0.$
\end{proposition}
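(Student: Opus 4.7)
The error term in Proposition \ref{Poisson} measures the gap between the admissible sum in Proposition \ref{computation} and the full lattice sum $\sum_{(\mathbf a',\mathbf k)\in \mathbb Z^{|E|+|T|}} f_r(2\mathbf a',\mathbf k)$, where $f_r=\psi\sum_\epsilon g_r^\epsilon$. Two kinds of lattice points produce this gap: (A) $r$-admissible $(\mathbf a,\mathbf k)$ whose rescaling lies outside $\mathrm{D_H}$---so some tetrahedral $6$-tuple is admissible but not of hyperideal type, $\psi=0$, and the full $g_r^\epsilon$ is missing from the Poisson sum; and (B) lattice points in $\mathrm{D_H}\setminus\overline{\mathrm{D_H^\delta}}$, where $0<\psi<1$ and the residual $(1-\psi)g_r^\epsilon$ escapes. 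Since in either region the number of relevant lattice points is $O(r^{|E|+|T|})$, it suffices to show $\mathrm{Im}\mathcal W_r^\epsilon\leq 2\mathrm{Vol}(M_{E_\theta})-4\pi\epsilon'$ uniformly on each region for some $\epsilon'>0$.

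For region (B), I follow the strategy of Proposition \ref{other}. Proposition \ref{convexity} and Lemma \ref{absm} give that $\mathrm{Im}\mathcal W^\epsilon$ attains its unique maximum $2|T|v_8$ on $\mathrm{D_H}$ at $\big(\pi,\dots,\pi,\tfrac{7\pi}{4},\dots,\tfrac{7\pi}{4}\big)$. By Proposition \ref{crit} and continuity, once $\{\beta_i\}$ are sufficiently close to $\pi$ and $\delta$ is chosen smaller than the $\delta_0$ of Proposition \ref{convexity}, the critical point $z^\epsilon$ lies in $\mathrm{D_H^\delta}$ and $\mathrm{Im}\mathcal W^\epsilon(z^\epsilon)=2\mathrm{Vol}(M_{E_\theta})$ is close to $2|T|v_8$. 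Compactness of $\overline{\mathrm{D_H}}\setminus\mathrm{D_H^\delta}$ then supplies $\epsilon_B'>0$ with $\mathrm{Im}\mathcal W^\epsilon\leq 2\mathrm{Vol}(M_{E_\theta})-4\pi\epsilon_B'$ throughout this region; the $O(\log r/r)$ comparison between $\mathcal W_r^\epsilon$ and $\mathcal W^\epsilon$ from Lemma \ref{converge} transfers this bound to $\mathrm{Im}\mathcal W_r^\epsilon$, and the polynomial lattice-point count finishes (B).

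For region (A), the key input is a uniform strict upper bound on the per-tetrahedron imaginary part of $\mathcal W_r^\epsilon$ over the non-hyperideal admissible region, where $U_r$ is given by the variant coming from Lemma \ref{factorial}(2). The corresponding continuous real function is the one studied by Costantino \cite{C} (see also Murakami--Yano \cite{MY} and Ushijima \cite{U}): its maximum on the admissible region equals $v_8$, attained only at the hyperideal locus. By compactness of the non-hyperideal portion of the closed admissible region there exists $\eta>0$ with this per-tetrahedron quantity bounded by $v_8-\eta$; summing over $|T|$ tetrahedra and using that $\mathrm{Vol}(M_{E_\theta})\to |T|v_8$ as the cone angles tend to $0$ produces $\epsilon_A'>0$ with $\mathrm{Im}\mathcal W_r^\epsilon\leq 2\mathrm{Vol}(M_{E_\theta})-4\pi\epsilon_A'$ uniformly on (A) when the $\beta_i$ are close enough to $\pi$. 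Taking $\epsilon'=\tfrac12\min\{\epsilon_A',\epsilon_B'\}$ proves the proposition. The main obstacle is (A): there $\mathcal W_r^\epsilon$ loses the clean interpretation from Theorem \ref{co-vol}, and one must rely directly on Costantino's analysis of the Murakami--Yano function $V$---in particular the strict-inequality refinement of $V\leq v_8$ away from the hyperideal locus---to produce the required uniform gap.
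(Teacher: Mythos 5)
Your proof is correct and takes essentially the same route as the paper: isolate the strict gap $\max V<v_8$ on the closure of $D_A\setminus D_H^\delta$ via compactness and the uniqueness of the maximum at the regular ideal octahedron, transfer it to $\mathrm{Im}\,\mathcal W_r^{\epsilon}$, use continuity in $\theta$ to keep $2\mathrm{Vol}(M_{E_\theta})$ above this max, and absorb the polynomially many lattice points. The paper phrases this more compactly by setting a single $M=\max\{\sum_s 2V\}$ over $\partial\mathrm{D_H}\cup(\mathrm{D_A}\setminus\mathrm{D_H})$, citing \cite[Section~4]{BDKY} for $M<2|T|v_8$, and applying Lemma \ref{est} to bound $|g_r^{\epsilon}|$ uniformly --- which neatly sidesteps the issue you flag in region (A), since Lemma \ref{est} controls $|\{n\}!|$ for all $0<n<r$ and so gives the Lobachevsky bound on the $6j$-symbol without having to track the Lemma \ref{factorial}(2) rewriting of $U_r$ on the non-hyperideal part.
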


For the proof we need the following estimate, which first appeared in \cite[Proposition 8.2]{GL} for $q=e^{\frac{\pi \sqrt{-1}}{r}},$ and for the root $q=e^{\frac{2\pi \sqrt{-1}}{r}}$ in \cite[Proposition 4.1]{DK}.

\begin{lemma}\label{est}
 For any integer $0<n<r$ and at $q=e^{\frac{2\pi \sqrt{-1}}{r}},$
 $$ \log\left|\{n\}!\right|=-\frac{r}{2\pi}\Lambda\left(\frac{2n\pi}{r}\right)+O\left(\log(r)\right).$$
\end{lemma}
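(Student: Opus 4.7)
The plan is to derive the estimate directly from Lemma \ref{factorial} together with Lemma \ref{converge}, reducing $\log|\{n\}!|$ to a real/imaginary part extraction and then recognizing the dominant contribution as a dilogarithm value.

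First, I would take $n$ in the range $1\le n \le (r-3)/2$ so that $\frac{2\pi n}{r}+\frac{\pi}{r}\in(0,\pi)$ and apply Lemma \ref{factorial}(1). Since $|e^{z}|=e^{\mathrm{Re}(z)}$ and $\frac{r}{4\pi\sqrt{-1}}=-\frac{r}{4\pi}\sqrt{-1}$, only the imaginary part of the exponent survives, and the polynomial summands $-2\pi\cdot\frac{2\pi n}{r}+(\frac{2\pi}{r})^2(n^2+n)$ are real and drop out. One is left with
\[
\log|\{n\}!| \;=\; \frac{r}{4\pi}\,\mathrm{Im}\Big(\varphi_r\big(\tfrac{\pi}{r}\big)-\varphi_r\big(\tfrac{2\pi n}{r}+\tfrac{\pi}{r}\big)\Big).
\]

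Next I would estimate each $\varphi_r$ term. By Lemma \ref{converge}(3), $\mathrm{Im}\,\varphi_r(\pi/r)=\frac{2\pi}{r}\log(r/2)+O(1/r^2)$, which contributes $\frac{1}{2}\log(r/2)+O(1/r)=O(\log r)$ after multiplication by $r/(4\pi)$. For the second term, Lemma \ref{converge}(1) and Zagier's identity (\ref{dilogLob}) give $\mathrm{Im}\,\varphi_r(\tfrac{2\pi n}{r}+\tfrac{\pi}{r})=2\Lambda(\tfrac{2\pi n}{r}+\tfrac{\pi}{r})+O(1/r^2)$, whose contribution to $\log|\{n\}!|$ is $-\frac{r}{2\pi}\Lambda(\tfrac{2\pi n}{r}+\tfrac{\pi}{r})+O(1/r)$. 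Finally, since $\Lambda'(\theta)=-\log|2\sin\theta|$ and $\frac{2\pi n}{r}\ge \frac{2\pi}{r}$ stays bounded away from the singularity at $0$, the Mean Value Theorem gives $|\Lambda(\tfrac{2\pi n}{r}+\tfrac{\pi}{r})-\Lambda(\tfrac{2\pi n}{r})|\le \frac{\pi}{r}\cdot O(\log r)$, so shifting the argument costs only $O(\log r)$. This yields the claim in this range.

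For $(r-1)/2\le n\le r-2$, I would replace Lemma \ref{factorial}(1) by Lemma \ref{factorial}(2), which subtracts $\pi$ from the argument of $\varphi_r$ so that Lemma \ref{converge}(1) applies again. The extra factor of $2$ in the formula contributes only $\log 2=O(1)$ to $\log|\{n\}!|$, and the $\pi$-periodicity of $\Lambda$ (which follows from $\int_0^{\pi}\log|2\sin t|\,dt=0$) identifies $\Lambda(\tfrac{2\pi n}{r}+\tfrac{\pi}{r}-\pi)$ with $\Lambda(\tfrac{2\pi n}{r}+\tfrac{\pi}{r})$; the same Mean Value step closes the argument. The extreme case $n=r-1$ is disposed of by the identity $|\{r-1\}!|=|\{1\}|\cdot|\{r-2\}!|$, with $\log|\{1\}|=\log|2\sin(2\pi/r)|=O(\log r)$ and the formula already established for $n=r-2$, using oddness of $\Lambda$ to match the two sides.

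The main potential obstacle is that $\Lambda'$ diverges at $0$ and $\pi$, so any naive shift of the argument could blow up. However, one only ever shifts by $\pi/r$, and at a point $\theta$ satisfying $\theta\ge \frac{2\pi}{r}$ (or symmetrically, $\pi-\theta\ge \frac{2\pi}{r}$, which is handled by the case split above), one has $|\log|2\sin\theta||=O(\log r)$, so the shift contributes exactly the admissible $O(\log r)$ error. This careful case analysis, rather than any deep estimate, is the heart of the argument.
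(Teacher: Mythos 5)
Your reduction via Lemma~\ref{factorial}(1)--(2) to the identity
$\log|\{n\}!| = \frac{r}{4\pi}\mathrm{Im}\big(\varphi_r(\tfrac{\pi}{r})-\varphi_r(\tfrac{2\pi n}{r}+\tfrac{\pi}{r})\big)$ (plus $\log 2$ in the second range) is correct, and the overall structure of the argument is reasonable. However there is a genuine gap in how you invoke Lemma~\ref{converge}(1). You claim $\mathrm{Im}\,\varphi_r\big(\tfrac{2\pi n}{r}+\tfrac{\pi}{r}\big)=2\Lambda\big(\tfrac{2\pi n}{r}+\tfrac{\pi}{r}\big)+O(1/r^2)$, but the correction term in Lemma~\ref{converge}(1) is $\frac{2\pi^2 e^{2\sqrt{-1}z}}{3(1-e^{2\sqrt{-1}z})}\cdot\frac{1}{r^2}$, whose imaginary part for real $z$ equals $\frac{\pi^2\cot z}{3r^2}$. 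When $z=\tfrac{(2n+1)\pi}{r}$ is of order $1/r$ (which happens for $n$ bounded, and symmetrically for $n$ near $(r-1)/2$ where $\pi-z$ is of order $1/r$), this is of size $O(1/r)$, not $O(1/r^2)$; after multiplying by $\tfrac{r}{4\pi}$ it contributes $O(1)$, not the $O(1/r)$ you assert. More seriously, the statement of Lemma~\ref{converge}(1) is a pointwise estimate ``for every $z$ with $0<\mathrm{Re}\,z<\pi$,'' and gives no information about the behavior of the $O(1/r^4)$ remainder as $z\to 0$ or $z\to\pi$. Without a uniform version of Lemma~\ref{converge}(1) (or at least an explicit rate of degradation of the remainder near the endpoints), the step cannot be justified from the lemmas stated in this paper. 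You are also missing the boundary case $n=(r-1)/2$: there the argument of $\varphi_r$ in Lemma~\ref{factorial}(2) is exactly $0$, which lies outside the range $0<\mathrm{Re}\,z<\pi$ covered by Lemma~\ref{converge}(1); you would need a separate estimate, e.g.\ comparing $\varphi_r(0)$ with $\varphi_r(\tfrac{\pi}{r})$ via Lemma~\ref{converge}(2)–(3), to close it.

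For comparison, the paper itself does not prove this lemma but defers to \cite[Proposition~8.2]{GL} and \cite[Proposition~4.1]{DK}. Those proofs proceed in a more elementary and more robust way: one writes $\log|\{n\}!|=\sum_{k=1}^n\log|2\sin\tfrac{2\pi k}{r}|$ and compares this Riemann sum directly with $\tfrac{r}{2\pi}\int_0^{2\pi n/r}\log|2\sin t|\,dt=-\tfrac{r}{2\pi}\Lambda(\tfrac{2\pi n}{r})$; the $O(\log r)$ error is then localized at the finitely many lattice points where $\tfrac{2\pi k}{r}$ is within $O(1/r)$ of a multiple of $\pi$, and the logarithmic singularity of the integrand produces exactly the admissible $O(\log r)$. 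That route avoids the uniformity issue entirely, because one never passes through the quantum dilogarithm asymptotics. Your route is of interest because it ties the lemma to the $\varphi_r$-machinery already assembled in Section~\ref{6jsymbol}, but it only becomes a proof once the error terms in Lemma~\ref{converge} are made uniform (with quantified blow-up) near $z=0$ and $z=\pi$.
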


\begin{proof}[Proof of Proposition \ref{error} ]
Let 
\begin{equation*}
M=\max\Big\{\sum_{s=1}^{|T|}2V(\alpha_{s_1},\dots,\alpha_{s_6},\xi_s)\ \Big|\ (\boldsymbol\alpha,\boldsymbol\xi)\in\partial \mathrm {D_H}\cup\big(\mathrm {D_A}\setminus \mathrm{D_H}\big)\Big\}
\end{equation*}
Then by \cite[Section 4]{BDKY},
$$M<2|T|v_8=2\mathrm{Vol}(M_{E_{(0,\dots,0)}});$$
and by continuity, if $\boldsymbol\theta$ is sufficiently closed to $(0,\dots,0),$ then
$$M<2\mathrm{Vol}(M_{E_{\boldsymbol\theta}}).$$

Now by Lemma \ref{est} and the continuity, for $\epsilon'=\frac{2\mathrm{Vol}(M_{E_{\boldsymbol\theta}})-M}{3},$ we can choose a sufficiently small $\delta>0$ so that if $\big(\frac{2\pi \mathbf a}{r},\frac{2\pi \mathbf k}{r}\big)\notin \mathrm{D_H^\delta},$ then
$$\Big|g_r^{\epsilon}(\mathbf a, \mathbf k)\Big|<O\Big(e^{\frac{r}{4\pi}(M+\epsilon')}\Big)=O\Big(e^{\frac{r}{2\pi}(\mathrm{Vol}(M_{E_{\boldsymbol\theta}})-\epsilon')}\Big).$$
Let $\psi$ be the bump function supported on $(\mathrm{D_H}, \mathrm{D_H^{\delta}}).$ Then the error term in Proposition \ref{Poisson} is less than $O\big(e^{\frac{r}{2\pi}(\mathrm{Vol}(M_{E_{\boldsymbol\theta}})-\epsilon')}\big).$
\end{proof}


\subsection{Proof of Theorem \ref{main}}\label{pf}

\begin{proof}[Proof of Theorem \ref{main}] Let $\epsilon>0$ be sufficiently small so that the conditions of Propositions \ref{critical}, \ref{other} and  \ref{error} and of Corollary \ref{5.8} are satisfied, and suppose $\{\beta_1,\dots,\beta_{|E|}\}$ are all in $(\pi-\epsilon, \pi+\epsilon).$

By Propositions \ref{4.2}, \ref{Poisson}, \ref{critical}, \ref{other} and  \ref{error}, 
\begin{equation*}
\begin{split}
&\mathrm {TV}_r(M,E,\mathbf b)\\
=&\frac{(-1)^{|E|\big(\frac{r}{2}+1\big)}2^{\mathrm{rank H}_2(M;\mathbb Z_2)-|T|}}{\{1\}^{|E|-|T|}}\Big(\sum_{\epsilon\in\{1,-1\}^{|E|}}\widehat{ f_r^{\epsilon}}(0,\dots,0)\Big)\Big(1+O\big(e^{\frac{r}{2\pi}{(-\epsilon')}}\big)\Big)\\
=&\frac{(-1)^{|E|\big(\frac{r}{2}+1\big)}2^{\mathrm{rank H}_2(M;\mathbb Z_2)-|T|}}{\{1\}^{|E|-|T|}}\bigg( \sum_{\epsilon\in\{1,-1\}^{|E|}}\frac{C^{\epsilon}(z^{\epsilon})}{\sqrt{-\det\mathrm{Hess}\Big(\frac{\mathcal W^{\epsilon}(\boldsymbol z^{\epsilon})}{4\pi\sqrt{-1}}\Big)}}\bigg)  e^{\frac{r}{2\pi}\mathrm{Vol}(M_{E_{\boldsymbol\theta}})}\Big( 1 + O \Big( \frac{1}{r} \Big) \Big);
\end{split}
\end{equation*}
and by Corollary \ref{5.8},
 $$\sum_{\epsilon\in\{1,-1\}^{|E|}}\frac{C^{\epsilon}(z^{\epsilon})}{\sqrt{-\det\mathrm{Hess}\Big(\frac{\mathcal W^{\epsilon}(\boldsymbol z^{\epsilon})}{4\pi\sqrt{-1}}\Big)}}\neq 0,$$
which completes the proof.
\end{proof}


\noindent
Tian Yang\\
Department of Mathematics\\  Texas A\&M University\\
College Station, TX 77843, USA\\
(tianyang@math.tamu.edu)

\end{document}